\numberwithin{equation}{section}
\newcommand{\MFq}{M_2(\mathbb F_q)}
\theoremstyle{plain}
\newtheorem{theorem}{Theorem}[section]
\newtheorem*{theorem*}{Theorem}
\newtheorem{lemma}[theorem]{Lemma}
\newtheorem{corollary}[theorem]{Corollary}
\newtheorem{problem}[theorem]{Problem}
\newtheorem{proposition}[theorem]{Proposition}
\newtheorem{notation}[theorem]{Notation}
\theoremstyle{remark}
\theoremstyle{definition}
\newtheorem{definition}[theorem]{Definition}
\begin{document}

\title{Distribution of determinant of sum of matrices}

\author{Daewoong Cheong}
\address{Department of Mathematics\\
Chungbuk National University \\
Cheongju, Chungbuk 28644 Korea}
\email{daewoongc@chungbuk.ac.kr}

\author{Doowon Koh}
\address{Department of Mathematics\\
Chungbuk National University \\
Cheongju, Chungbuk 28644 Korea}
\email{koh131@chungbuk.ac.kr}

\author{Thang Pham} \address{Department of Mathematics\\
University of Rochester\\
Rochester, NY 14627 USA}
\email{vanthangpham@rochester.edu}

\author{Le Anh Vinh}
\address{Vietnam Institute of Educational Sciences\\ Hanoi, 100000, Vietnam}
\email{vinhle@vnies.edu.vn}

\begin{abstract}
Let $\mathbb{F}_q$ be an arbitrary finite 
field of order $q$. In this article, we study $\det S$ for certain types of subsets $S$ in the ring  $M_2(\mathbb F_q)$ of $2\times 2$ matrices with entries in $\mathbb F_q$. 
For $i\in \mathbb{F}_q$, let $D_i$ be the subset of $M_2(\mathbb F_q)$ defined by
 $ D_i := \{x\in M_2(\mathbb F_q): \det(x)=i\}.$ Then our results can be stated as follows.
First of all, we show  that when $E$ and $F$ are subsets of $D_i$ and $D_j$ for  some $i, j\in \mathbb{F}_q^*$, respectively,  we have $$\det(E+F)=\mathbb F_q,$$
whenever  $|E||F|\ge {15}^2q^4$, and then  provide a concrete construction to show that our result is sharp.  Next, as an application of the first result,  we  investigate a distribution of the determinants generated by the sum set $(E\cap D_i) + (F\cap D_j),$ when $E, F$ are subsets  of the product type, i.e., $U_1\times U_2\subseteq \mathbb F_q^2\times \mathbb F_q^2$ under the identification $ M_2(\mathbb F_q)=\mathbb F_q^2\times \mathbb F_q^2$.  Lastly, as an extended version of the first result,  
we prove that if $E$ is a set in $D_i$ for $i\ne 0$ and $k$ is large enough, then we have 
\[\det(2kE):=\det(\underbrace{E + \dots + E}_{2k~\textup{terms}})\supseteq \mathbb{F}_q^*,\]
whenever the size of $E$ is close to $q^{\frac{3}{2}}$. Moreover, we show that, in general, the threshold $q^{\frac{3}{2}}$ is best possible. Our main method is based on the discrete Fourier analysis.
\end{abstract}

\subjclass[2010]{11T53, 11T23}

\keywords{ Matrix rings, Determinant, Finite field}

\thanks{The first and second listed authors were supported by Basic Science Research Programs through National Research Foundation of Korea (NRF) funded by the Ministry of Education (NRF-2018R1D1A3B07045594 and NRF-2018R1D1A1B07044469, respectively). The third listed author was supported by Swiss National Science Foundation grant P400P2-183916.}

\maketitle 
%\vspace{.4in}
%\tableofcontents
%\setstretch{1.25}

%\setcounter{tocdepth}{1}
%\tableofcontents
\section{Introduction}
Let $E$ be a finite subset of $\mathbb R^d, d\ge 2.$ The  Erd\H{o}s distinct distances problem is to find the best possible lower bound of the distance set $\Delta(E)$ in terms of $|E|,$ where  
$\Delta(E)$ is defined as
$$ \Delta(E):=\{|x-y|: x,y \in E\}.$$
In dimension two,  Erd\H{o}s \cite{Er46} conjectured that $|\Delta(E)|\gg |E|/\sqrt{\log|E|}$. This was solved up to logarithmic factor by Guth and Katz \cite{GK15}. Indeed, they proved that $|\Delta(E)|\gg |E|/\log|E|.$ In higher dimensions, it was also conjectured by  Erd\H{o}s \cite{Er46} that $|\Delta(E)|\gg |E|^{2/d},$ which has long stayed  unsettled.   We refer readers to \cite{SV04, SV08} for recent developments and 
partial results on the Erd\H{o}s distinct distances problem in three and higher dimensions. 
As a continuous analog of the Erd\H{o}s distinct distances conjecture,  Falconer \cite{Fa86} conjectured that  any subset $E$ of $\mathbb R^d$ of  the Hausdorff dimension  greater than $d/2$ determines  
a distance set of a positive Lebesgue measure. This conjecture is still open in all dimensions, and, recently, much progress on this problem has been made (see, for example, \cite{Ma87, Bo94, W099, Erd05, Erd06, DGOWWZ, GIOW}).\\ 
   
In the finite field setting, the distance problems turn out to have features of both the Erd\H{o}s and Falconer distance problems.
Bourgain-Tao-Katz \cite{BKT04}  studied the finite field Erd\H{o}s distance problem for the first time. Let $\mathbb F_q^d, d\ge 2,$ be the $d$-dimensional vector space over a finite field $\mathbb F_q$ with $q$ elements. Throughout this paper, we assume that $q$ is an odd prime power. Given two subsets $E, F$ of $\mathbb F_q^d,$ the distance set, denoted by $\Delta(E, F)$, is defined as
$$ \Delta(E, F):=\{\|x-y\|: x\in E, y\in F\},$$
where $\|\alpha\|=\alpha_1^2+ \cdots +\alpha_d^2$ for $\alpha=(\alpha_1, \ldots, \alpha_d).$ The first non-trivial result was obtained by  Bourgain-Tao-Katz \cite{BKT04}  using arithmetic-combinatorial methods and the connection of the geometric incidence problem of counting distances with sum-product estimates. They showed that if $q\equiv 3 \mod{4}$ is a prime and $E$ is a subset of $\mathbb F_q^2$ with $|E|= q^{2-\delta}$ for some $0<\delta <2,$ then there exists a positive number $\epsilon=\epsilon{(\delta)}$ such that 
$$ |\Delta(E,E)|\ge |E|^{\frac{1}{2}+ \varepsilon}.$$
In their proof of this result, it was not trivial to find an explicit relationship between $\delta$ and $\epsilon.$ Furthermore,  
as pointed out in \cite{IR07}, their result could not be extended over an arbitrary finite field. Indeed, if $q=p^2$ for a prime $p,$ then taking $E=\mathbb F_p\times \mathbb F_p,$  we have $|\Delta(E,E)|=p=q^{1/2}$ and $|E|=p^2=q.$ Moreover, if $q\equiv 1 \mod{4},$ then there exists $i\in \mathbb F_q$ with $i^2=-1$ so that the set $E=\{(t, it): t\in \mathbb F_q\}$ satisfies that $|E|=q$ and $\Delta(E,E)=\{0\}.$\\

Over an arbitrary finite  field, not necessarily a prime field, it was
 Iosevich and Rudnev \cite{IR07} who obtained an explicit lower bound  on the size 
of $\Delta(E,E)$ in terms of the size of $E$. 
More precisely, they proved that if $E\subseteq \mathbb F_q^d$ 
such that $|E|\ge Cq^{d/2}$ for a sufficiently large constant $C,$ then 
\begin{equation}\label{IREr} |\Delta(E,E)|\gg \min\left\{q,~ \frac{|E|}{q^{(d-1)/2}}\right\}.\end{equation}
Here, and throughout this paper, $X\gg Y$ means that there is a constant $C$ independent of $q$ such that $CX \ge Y$ and we also write $Y\ll X$ for $X\gg Y.$ In addition,  $X\sim Y$ is used to indicate that $X\gg Y$ and $Y\gg X.$  Shparlinski \cite{Sh06} extended the result \eqref{IREr} 
to the case when $E, F$ are arbitrary subsets of $\mathbb F_q^d$:
$$ |\Delta(E,F)| > \frac{1}{2} \min\left\{ q,~ \frac{|E||F|}{q^d}\right\}.$$
Similar results were obtained for  generalized distances defined by certain polynomials (see, for example, \cite{IK08, KS12, Vi13}).
In specific ranges of sizes of sets $E,F$ in $\mathbb F_q^d,$ slightly better lower bounds were given in \cite{Di13, KS15}.\\

Notice that the above Shparlinski's result implies that if $E,F\subseteq \mathbb F_q^d$ with $|E||F|\ge q^{d+1},$ then the distance set $\Delta(E,F)$ contains a positive proportion of all possible distances. This can be considered as a result on a finite field version of the Falconer distance problem. 

In view of these examples, Iosevich and Rudnev posed the following problems.
\begin{problem} [The Erd\H{o}s-Falconer distance problem] Let $E, F$ be subsets of $\mathbb F_q^d.$ How much large sets $E, F$ do we need to assure that  the distance set $\Delta(E,F)$ contains a positive proportion of all distances?
\end{problem}
Iosevich and Rudnev \cite{IR07} also raised the following question which calls for much stronger conclusion than in the Erd\H{o}s-Falconer distance problem.
\begin{problem}[The Strong Erd\H{o}s-Falconer distance problem] Let $E,F$ be subsets of $\mathbb F_q^d.$ What is the smallest exponent $\alpha$ such that if $|E||F|\ge C q^\alpha$, then the distance set $\Delta(E,F)$ contains all distances? \end{problem}

When $E=F,$  Iosevich and Rudnev \cite{IR07} proved that if $E\subseteq \mathbb F_q^d, d\ge 2,$ and $|E|\ge 4 q^{(d+1)/2}$, then $\Delta(E,E)=\mathbb F_q.$ The authors in \cite{HIKR10} constructed an example to show that the exponent $(d+1)/2$ in odd dimensions can not be improved without further restrictions. In even dimensions, it is conjectured that  any subset $E$ of $\mathbb F_q^d$ with $|E|\ge C q^{d/2}$ determines all distances. This conjecture is open in all even dimensions and the exponent $(d+1)/2,$ due to Iosevich and Rudnev, has not been improved in all even dimensions. There have been recently produced much related results for which we refer  to \cite{CEHIK12, KSE}.
\bigskip

On the other hand, after Iosevich and Rudnev's work,  the Erd\H{o}s-Falconer type distance problem has been studied for other geometric objects (see, for instance, \cite{hart1,vinh, han}).
Among other things,  a similar question has been addressed in the setting of matrix rings. 
For an integer $n\ge 2$, let $M_n(\mathbb{F}_q)$ be the set of $n\times n$ matrices with entries in $\mathbb{F}_q$ and $SL_n(\mathbb{F}_q)$ be the special linear group in $M_n(\mathbb{F}_q)$. 
Ferguson, Hoffman, Luca, Ostafe, and Shparlinski \cite{shpa} studied the following problem. 

\begin{problem}\label{qs1} Let $E$ and $F$ be sets in $M_n(\mathbb{F}_q).$ How large do $E$ and $F$ need to be to guarantee that there exists $(x, y)\in E\times F$ such that $\det(x+y)=1$? 
\end{problem}
Ferguson et al. \cite{shpa} developed a version of the Kloosterman sum over matrix rings to  prove that if $|E||F|\ge 2 q^{2n^2-2}$, then there exist elements $x\in E$ and $y\in F$ such that $\det(x+y)=1$. In the paper \cite{li}, Li and Hu gave an explicit expression of Gauss sum for the special  linear group $SL_n(\mathbb{F}_q)$, and as a consequence, they obtained an improvement of Ferguson et al.'s result. More precisely, they showed that if $n=2$, then the condition $|E||F|\ge C q^5$ is enough, but in higher dimensional cases, we need $|E||F|\ge C q^{2n^2-2n}$. Note that a graph theoretic proof of the result for the case $n=2$ was given recently by Dem\.iro\u{g}lu Karabulut \cite{Yesim}. More precisely, she proved that if $|E||F|> 4q^{7}/(q-1)^2$, then  for every $t\in \mathbb F_q^*$ there exists $(x,y)\in E\times F$ such that $\det(x-y)=t.$ In Appendix, based on the discrete Fourier analysis, we will give an alternative proof for a similar result of Karabulut but for more accurate size conditions on sets: if $E, F\subseteq M_2(\mathbb F_q)$ with $|E||F|> 4q^5$, then we have $\det(E+F)\supseteq \mathbb F_q^*.$ 

We refer readers to \cite{covert, h1, h2, h3, koh, pp, R} for  recent results in the setting of matrix rings.
\subsection{Statement of main results} In this paper, we study Problem \ref{qs1} for $n=2$ through a discrete Fourier analysis based on an  Odot-product. 
For $i\in \mathbb F_q,$ recall that  $D_i$ is a subset of $ M_2(\mathbb F_q)$ defined  as 
$$ D_i= \{x\in M_2(\mathbb F_q): \det(x)=i\}.$$
For $S\subseteq M_2(\mathbb F_q)$, let $\det(S)$ denote the set of determinants generated by $S$, i.e.,
$$\det(S):=\{ \det(x)\in \mathbb F_q: x\in S\}.$$
 The first result of ours is concerned with the sum set  $S=E+F$ with a restriction $E\subseteq D_i$ and  $F\subseteq D_j$ for $i, j\in \mathbb{F}_q^*.$
Namely, we produce an optimal result on Problem \ref{qs1} for the sum set  $E+F$. 

\begin{theorem} \label{main1} For $i, j\in \mathbb F_q^*,$ let  $E\subseteq D_i$ and $F\subseteq D_j.$ 
If $|E||F|\ge {15}^2q^4$, then we have
$$ \det(E+F)=\mathbb F_q.$$
\end{theorem}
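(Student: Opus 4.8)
My plan is to convert the determinant statement into a dot‑product statement via the polarization of the determinant, and then run a character‑sum argument whose engine is the Fourier transform of the indicator of the quadric $D_i$.

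\textbf{Reduction via the Odot–product.} For $x,y\in\MFq$ one has the polarization identity $\det(x+y)=\det x+\det y+x\odot y$, where $x\odot y:=\operatorname{tr}\!\bigl(x\,\operatorname{adj}(y)\bigr)$ is the symmetric nondegenerate bilinear form attached to the quadratic form $\det$. Since $E\subseteq D_i$ and $F\subseteq D_j$, for $x\in E,\,y\in F$ this gives $\det(x+y)=i+j+x\odot y$, so the claim $\det(E+F)=\Fq$ is \emph{equivalent} to $\{x\odot y:x\in E,\ y\in F\}=\Fq$. Because $\operatorname{adj}(y)=\det(y)\,y^{-1}=j\,y^{-1}$ for invertible $y\in D_j$, we may also write $x\odot y=j\operatorname{tr}(xy^{-1})$, exhibiting the target as a genuine dot–product (trace‑form) problem for two sets lying on quadrics.

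\textbf{Counting via characters.} Fix a nontrivial additive character $\chi$ of $\Fq$ and, for $c\in\Fq$, set $N_c=\#\{(x,y)\in E\times F:x\odot y=c\}$; it suffices to show $N_c>0$ for every $c$. Detecting the equation with $\chi$ yields $N_c=\tfrac{\abs E\,\abs F}{q}+\tfrac1q\sum_{b\neq0}\chi(-bc)\,S(b)$, where $S(b)=\sum_{x\in E,\,y\in F}\chi(b\,x\odot y)$. Thus the whole problem reduces to the uniform estimate $\bigl|\sum_{b\neq0}\chi(-bc)S(b)\bigr|<\abs E\,\abs F$.

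\textbf{Fourier transform of the quadric.} The technical heart is an exact evaluation of $\widehat{1_{D_i}}(w):=\sum_{x\in D_i}\chi(-x\odot w)$. Writing $1_{\det x=i}$ with an extra character and completing the square in the linear term $x\odot w$ (legitimate precisely because $\odot$ polarizes $\det$) reduces matters to the Gauss sum $\sum_x\chi(a\det x)$; as $\det$ is a hyperbolic quadratic form in four variables this equals $q^2$ for all $a\neq0$. One gets $\widehat{1_{D_i}}(w)=q^3\mathbf 1_{w=0}+q\,K(i,\det w)$, depending on $w$ \emph{only} through $\det w$, with $K$ a Kloosterman sum; the Weil bound gives $|\widehat{1_{D_i}}(w)|\le 2q^{3/2}$ when $\det w\neq0$ and $\widehat{1_{D_i}}(w)=-q$ on the nonzero singular cone. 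Feeding this into $S(b)$ through a Cauchy–Schwarz that enlarges the $x$–sum to all of $D_i$ (or the $y$–sum to all of $D_j$) expresses $\sum_{b\neq0}|S(b)|^2$ via $\widehat{1_{D_i}}$ evaluated at differences, and the constants $2\sqrt q$ and $\pm q$ above are what should ultimately produce the numerical factor $15$.

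\textbf{The main obstacle.} The difficulty is the \emph{sharpness} of the exponent. A single Cauchy–Schwarz produces a diagonal term of size $\approx q^4\abs E\,\abs F$ in $\sum_{b\neq0}|S(b)|^2$, which only delivers the threshold $q^5$ — exactly the generic bound already available for arbitrary subsets of $\MFq$ (cf. the Appendix's $4q^5$). Gaining the extra factor of $q$ down to $q^4$ forces one to use the determinant constraint on \emph{both} $E$ and $F$ at once together with the radial structure of Step 3, rather than a lossy symmetric Cauchy–Schwarz. I expect the decisive device to be the Kloosterman convolution identity $\sum_{s\in\Fq}K(t,s)K(s,\alpha)=q^2\mathbf 1_{\alpha=t}-q$: after grouping the Fourier variable by the value of its determinant and pairing the transform of the \emph{target} level set $D_t$ against those of $D_i$ and $D_j$, this identity isolates a clean main term proportional to $N_c$ from a genuinely smaller remainder. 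Carrying out this cancellation with explicit constants, separately treating the singular cone $\det m=0$ (where the transform is only $-q$ and no Weil saving is available) and the degenerate target $t=0$ (requiring its own evaluation of $\widehat{1_{D_0}}$), is where I anticipate the real work to lie.
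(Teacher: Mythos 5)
Your Steps 1--3 coincide exactly with the paper's setup: your polarization identity is Lemma \ref{Odotlem}, your counting reduction is the paper's identity $N_t(E,F)=W_{t-i-j}(E,F)$, and your evaluation $\widetilde{D_i}(w)=q^3\delta_0(w)+q\,K(i,\det w)$ is precisely Lemma \ref{Focirl} (your supporting computations --- the Gauss sum $\sum_{x}\chi(a\det x)=q^2$ for $a\ne 0$, the value $-q$ on the punctured cone, the Weil bound $2q^{3/2}$ off it --- are all correct). The genuine gap is your Step 4, which you candidly leave unexecuted, and the device you propose there does not engage the actual obstruction. After the character detection, the Fourier data entering the count are the transforms of the \emph{arbitrary} sets $E$ and $F$, and these are not radial: no grouping of the Fourier variable by the value of its determinant is available for them, so the (true) convolution identity $\sum_{s}K(t,s)K(s,\alpha)=q^2\mathbf{1}_{\alpha=t}-q$ has nothing to pair against. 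The radial transforms $\widetilde{D_i},\widetilde{D_j}$ only enter after a Cauchy--Schwarz, which can be applied on one side at a time and destroys the symmetric pairing your plan requires; as written, the plan is not well-posed, and what survives of it is exactly the lossy argument you yourself identify as stalling at the generic $4q^5$ threshold of Proposition \ref{thm3}.

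What the paper actually does at this juncture is different and is the real engine of the $q^4$ threshold. Applying Cauchy--Schwarz in $x\in E$ while keeping the detection variables $s,s'\in\Fq^*$ \emph{inside} yields $|R_\ell(E,F)|^2\le q^{-2}|E|\sum_{y,y'\in F,\,s,s'\ne 0}\widetilde{D_i}(s'y'-sy)\,\chi(\ell(s'-s))$, and the Kloosterman sum attached to $\widetilde{D_i}$ is then evaluated \emph{exactly} via Gauss sums (Lemmas \ref{Lemma-A(l)}--\ref{Lemma-B(22)}), exploiting $\|s'y'-sy\|_*=js'^2+js^2-ss'(y'\odot y)$, i.e.\ the constraint $F\subseteq D_j$, rather than the pointwise Weil bound (which in fact never appears in the proof of Theorem \ref{main1}, only in the Appendix --- so your expectation that $2\sqrt q$ produces the constant $15$ is also off). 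The dangerous terms are those where a quadratic bracket vanishes, namely $y'\odot y\in\{2j,\,(\ell^2-2ij)/i\}$, and their contribution is controlled by the maximal Odot-multiplicity $M(F,F)=\max_{\ell}W_\ell(F,F)$. The decisive idea is then a bootstrap: inequality \eqref{Hp} bounds $R_\ell(E,F)$ in terms of $M(F,F)$, and the same inequality with $E=F$ (condition \eqref{Hp1}) bounds $M(F,F)$ in terms of itself, giving $M(F,F)\le 2|F|^2/q+8q|F|+2\sqrt7\,|F|^{3/2}$ via Lemma \ref{twoC} and hence Proposition \ref{MainThm}. Your proposal contains no substitute for this self-improving step, and this --- not the degeneracies you flag (the singular cone, or $t=0$, which in the paper's route needs no separate treatment since the count runs through $W_\ell$ for all $\ell$, and the only special case is $\ell^2=4ij$, where the relevant term vanishes) --- is where the proof actually lives.
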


Note that this result should be compared with results of Ferguson et al., Li and Hu, and Karabulutin  in the paragraph subsequent to Problem \ref{qs1}.
In our result, we impose a stronger condition on $E, F$, i.e., $E\subseteq D_i$ and   $E\subseteq D_j$, than they did in \cite{shpa, li, Yesim} ,  while our threshold $q^4$ is much better than those in their results (for $n=2$).

\smallskip

One can easily construct an example to show that  the threshold $q^4$  can not be lower for arbitrary subsets $E,F$ of $M_2(\mathbb F_q)$.
For instance, let $q=p^2$ for some odd prime $p$ and take
$$E=F=\left\{\left(\begin{matrix}
x_1&x_2\\
x_3&x_4
\end{matrix}\right) \in M_2(\mathbb F_q): x_1,x_2,x_3,x_4 \in \mathbb F_p\right\}.$$ Then $|E|=|F|=q^2$ and $\det(E+F)=\mathbb F_p.$ This example proposes a conjecture that for any subsets $E,F$ of $M_2(\mathbb F_q)$ with $|E||F|\ge C q^4$ for a large constant $C>1,$ we have $\det(E+F)=\mathbb F_q.$ 
Notice that Theorem \ref{main1} confirms this conjecture (up to a constant) in the specific case when $E\subseteq D_i$ and $F\subseteq D_j$ for $i,j\ne 0.$  
Then there arises a natural question whether it is possible to improve the threshold $q^4$ in the specific cases.  In this paper we show that the threshold $q^4$ can not go lower in general, so Theorem \ref{main1} is sharp. Indeed, for any non-square number $i$ of $\mathbb F_q,$ we will construct a set $E\subseteq D_i$ such that  $|E|\sim q^2$, but $\det(E+E) \ne \mathbb F_q.$
\smallskip

Notice that we have obtained the very explicit constant $15^2$ for the bound in Theorem \ref{main1}. Such an explicit constant is  not available in the literature in general, and  is one of features this paper owns.  It would be interesting to search for  a smaller constant than this.\\
%(?? For instance, the constant $15^2$ in Thoerm \ref{main1} might be lowered to $1$, seeing the example right in the above.)} 

Taking $E=F$, the following corollary follows immediately from Theorem \ref{main1}. 
\begin{corollary}\label{maincor} Let $i$ be an element of $\mathbb{F}_q^*$ and  $E$ be a set in $D_i$. If $|E|\ge 15 q^{2},$ then we have 
$$ \det(E+E)= \mathbb F_q.$$
\end{corollary}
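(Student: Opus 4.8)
The plan is to obtain Corollary \ref{maincor} as the diagonal case $E = F$ of Theorem \ref{main1}, so the proof is a direct specialization rather than a fresh argument. First I would set $F := E$ and $j := i$, and check that the hypotheses of Theorem \ref{main1} are met. The theorem imposes no distinctness or disjointness condition on the two sets, and it does not require $i \ne j$; it only asks that $i, j \in \mathbb{F}_q^*$. Since $i \in \mathbb{F}_q^*$ by assumption, taking $j = i$ is admissible, and the two containments $E \subseteq D_i$ and $F = E \subseteq D_i = D_j$ hold automatically.

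Next I would verify that the size hypothesis of the corollary matches the product hypothesis of the theorem under this substitution. With $F = E$, the quantity $|E||F|$ becomes $|E|^2$, so the requirement $|E||F| \ge 15^2 q^4$ reads $|E|^2 \ge 15^2 q^4$. Taking square roots of both (positive) sides, this is equivalent to $|E| \ge 15 q^2$, which is precisely the assumption of Corollary \ref{maincor}. Applying Theorem \ref{main1} to the pair $(E, E)$ then gives
$$ \det(E + E) = \mathbb{F}_q, $$
as desired.

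There is no genuine obstacle in this argument: the only point that needs checking is that the statement of Theorem \ref{main1} is symmetric enough to permit $E = F$ and $i = j$, which it is. Consequently the entire content of the corollary is already contained in Theorem \ref{main1}, and no additional Fourier-analytic input is required beyond what was used to establish that theorem.
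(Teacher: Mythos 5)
Your proposal is correct and matches the paper exactly: the paper derives Corollary \ref{maincor} by taking $E=F$ (hence $j=i$) in Theorem \ref{main1}, and your verification that $|E|\ge 15q^2$ is equivalent to $|E||F|=|E|^2\ge 15^2q^4$ is the only step needed.
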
 

As a motivation for the second result, let us first consider the following simple question to answer.
Given two varieties $D_i, D_j$ in $M_2(\mathbb F_q)$ for non-zero $i, j\in \mathbb F_q,$ determine the smallest exponent $\beta$ such that for any sets $E,F\subseteq M_2(\mathbb F_q)$ with  $|E||F|\ge C q^\beta$, we have
$$ \det\left((E\cap D_i)+(F\cap D_j)\right)=\mathbb F_q.$$
As it stands, the answer for the smallest exponent $\beta$ is $8.$
To see this,  take $E=M_2(\mathbb F_q)\setminus D_i$ and $F=M_2(\mathbb F_q)\setminus D_j$. Then $(E\cap D_i)+(F\cap D_j)$ is an empty set, and $|E|, |F|\sim q^4$ (equivalently, $|E||F|\sim q^8$). This example proposes that the smallest exponent $\beta$ can not be less than $8.$ On the other hand, if we take $E=F=M_2(\mathbb F_q)$, then $|E||F|=q^8$ and $\det\left((E\cap D_i)+(F\cap D_j)\right)=\mathbb F_q.$ 
  
However, in our second result we prove that  if we work with subsets $E,F$ with some restriction, we obtain a non-trivial result. To explain this, we fix the identification  $ M_2(\mathbb F_q)=\mathbb F_q^2\times \mathbb F_q^2$ through the assignment $$\left(\begin{matrix}
x_1&x_2\\
x_3&x_4
\end{matrix}\right)\mapsto \big(x_1,x_2, x_3,x_4\big).$$ 
 
Write $ \left(\begin{matrix} 
S_1\\
S_2
\end{matrix}\right)$ for the subset of $ M_2(\mathbb F_q)$ corresponding to $S_1\times S_2\subseteq  \mathbb F_q^2\times \mathbb F_q^2.$
We will say that  a subset $S\subseteq M_2(\mathbb F_q)$ is of product type if it is written as   $ S=\left(\begin{matrix} 
S_1\\
S_2
\end{matrix}\right)$ for some $S_1,S_2\subseteq \mathbb F_q^2.$
Then as an application of Theorem \ref{main1}  we obtain the following.
\begin{theorem} \label{Alexset}
Let $E, F\subseteq M_2(\mathbb F_q)$ be of product type. 
If $|E|, |F|\ge C q^3$ for a sufficiently large constant $C$, then for any $i, j\in \mathbb F_q^*,$ we have
$$ \det\left( (E\cap D_i) + (F\cap D_j)\right)=\mathbb F_q.$$
\end{theorem}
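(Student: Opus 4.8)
The plan is to derive Theorem~\ref{Alexset} from Theorem~\ref{main1} by checking that the latter's size hypothesis already holds for the sliced sets $E\cap D_i$ and $F\cap D_j$. Indeed, $E\cap D_i\subseteq D_i$ and $F\cap D_j\subseteq D_j$ with $i,j\in\mathbb{F}_q^*$, so Theorem~\ref{main1} gives $\det\big((E\cap D_i)+(F\cap D_j)\big)=\mathbb{F}_q$ the moment that $|E\cap D_i|\,|F\cap D_j|\ge 15^2q^4$. Hence everything reduces to a lower bound of the form $|E\cap D_i|\gtrsim q^2$ (and its analogue for $F$). Writing $E=U_1\times U_2$ with $U_1,U_2\subseteq\mathbb{F}_q^2$, I would first record the elementary consequence of the product hypothesis that, since $|U_1|\,|U_2|=|E|\ge Cq^3$ while $|U_1|,|U_2|\le q^2$, both factors satisfy $|U_1|,|U_2|\ge Cq$; this is what will make the main term dominate.

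The heart of the matter is the counting estimate
$$\left|\,|E\cap D_i|-\frac{|U_1||U_2|}{q}\,\right|\le \sqrt{q\,|U_1|\,|U_2|}+q,\qquad i\in\mathbb{F}_q^*.$$
To prove it I would use the identification to write, with $\omega(u,v):=\det\left(\begin{matrix}u_1&u_2\\ v_1&v_2\end{matrix}\right)=u_1v_2-u_2v_1$,
$$|E\cap D_i|=\#\{(u,v)\in U_1\times U_2:\omega(u,v)=i\}=\sum_{u\in U_1\setminus\{0\}}\big|\,U_2\cap \ell_u\,\big|,$$
where for $u\ne 0$ the set $\ell_u=\{v\in\mathbb{F}_q^2:\omega(u,v)=i\}$ is an affine line (the term $u=0$ drops out because $i\ne 0$). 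A direct check shows the lines $\ell_u$ are pairwise distinct as $u$ ranges over $U_1\setminus\{0\}$, so $|E\cap D_i|$ is exactly the number of incidences between the point set $U_2$ and the line set $\mathcal{L}=\{\ell_u:u\in U_1\setminus\{0\}\}$ in $\mathbb{F}_q^2$. I would then invoke the spectral gap of the point--line incidence graph of $\mathbb{F}_q^2$: its biadjacency matrix $N$ satisfies $NN^{\top}=qI+J$, whence all nontrivial singular values equal $\sqrt q$, and the expander mixing lemma yields the displayed estimate (the main term being $|\mathcal{L}|\,|U_2|/q$, with the discrepancy from $|U_1|\,|U_2|/q$ absorbed into the additive $+q$). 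Equivalently, this is the Fourier computation the paper favours: expanding $\mathbf{1}_{D_i}$ via the Gauss sum of the non-degenerate quadratic form $\det$ gives $|\widehat{\mathbf{1}_{D_i}}(\xi)|\le 2q^{3/2}$ for $\xi\ne 0$, and pairing this against the factored transform $\widehat{\mathbf{1}_{U_1}}\otimes\widehat{\mathbf{1}_{U_2}}$ produces the same $\sqrt{q\,|U_1|\,|U_2|}$ saving.

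Granting the estimate, the conclusion is quick. Because $|U_1|,|U_2|\ge Cq$ we have $\sqrt{q\,|U_1|\,|U_2|}=\sqrt q\,|E|^{1/2}\le C^{-1/2}\,|E|/q$ and also $q\le C^{-1}\,|E|/q$, so for $C$ large enough
$$|E\cap D_i|\ge \frac{|E|}{q}-\sqrt{q\,|E|}-q\ge \tfrac12\,\frac{|E|}{q}\ge \tfrac{C}{2}\,q^2,$$
and symmetrically $|F\cap D_j|\ge \tfrac{C}{2}q^2$. Thus $|E\cap D_i|\,|F\cap D_j|\ge \tfrac{C^2}{4}q^4\ge 15^2q^4$ once $C\ge 30$, and Theorem~\ref{main1} closes the argument. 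The single delicate point is the error term in the estimate above: the crude character-sum bound only gives an error of size $q\,|E|^{1/2}$, which swamps the main term $|E|/q$ in the decisive range $|E|\sim q^3$. Extracting the correct $\sqrt q$-saving is precisely where the non-degeneracy of the determinant form must be used --- through the $\sqrt q$ spectral gap of the incidence graph, or equivalently through the Gauss/Sali\'e-sum cancellation in $\widehat{\mathbf{1}_{D_i}}$ --- and this is the main obstacle of the proof.
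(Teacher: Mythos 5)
Your argument is correct, and its skeleton is the paper's: reduce Theorem \ref{Alexset} to a lower bound on the slice sizes $|E\cap D_i|$, $|F\cap D_j|$, then invoke Theorem \ref{main1}. Your counting estimate $\bigl| |E\cap D_i|-|E|/q\bigr|\le\sqrt{q|E|}+q$ is the paper's Lemma \ref{bigcor} (which gives the slightly cleaner bound $q^{1/2}|S|^{1/2}$ with no $+q$ term), and the paper proves it by exactly the second-moment computation your spectral argument packages: orthogonality of $\chi$, Cauchy--Schwarz in $\alpha\in S_1$ completing $S_1$ to $\mathbb F_q^2$ while keeping $S_2$, with the diagonal $r\beta=r'\beta'$ contributing $q|S|$. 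Your reformulation of the slice count as point--line incidences between $U_2$ and the pairwise-distinct lines $\ell_u$, $u\in U_1\setminus\{0\}$, followed by the Vinh-type incidence bound, is the same estimate in graph-theoretic dress; what it buys is a citation and conceptual transparency about where the $\sqrt q$-saving lives, where the paper instead gives a short self-contained character-sum proof. Two misstatements, neither load-bearing: (i) for the \emph{affine} plane one has $NN^{\top}=qI+J-B$, where $B$ is the block matrix of parallel classes, not $NN^{\top}=qI+J$ (that identity holds in the projective plane $PG(2,q)$); since $B$ has eigenvalues in $\{0,q\}$ and $J$ annihilates $\mathbf{1}^{\perp}$, the nontrivial singular values are still at most $\sqrt q$, so your estimate survives. (ii) The parenthetical claim that pairing $|\widehat{1_{D_i}}(\xi)|\le 2q^{3/2}$ against the factored transform $\widehat{1_{U_1}}\otimes\widehat{1_{U_2}}$ ``produces the same $\sqrt{q|U_1||U_2|}$ saving'' is false as stated: taking absolute values, even with the tensor factorization, only yields an error of order $q^{3/2}|E|^{1/2}$, which would force $|E|\gg q^5$; extracting the true $\sqrt{q|E|}$ error requires the asymmetric Cauchy--Schwarz of Lemma \ref{bigcor} (equivalently, your incidence argument), exactly the obstacle you correctly identify in your closing sentence. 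Your final numerics ($|E\cap D_i|\ge\tfrac12|E|/q\ge\tfrac{C}{2}q^2$ for $C$ large, so the product of slice sizes exceeds $15^2q^4$ once $C\ge 30$) match the paper's deduction and close the argument.
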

A few words on Theorem \ref{Alexset} are in order. First, note that the theorem implies that the subset $E\cap D_i$ is nonempty for any $i\ne 0$. In fact, we will see from Lemma \ref{bigcor} that   we have  $$|E\cap D_i|\sim \frac{|E|}{q},$$
which can be combined with  Theorem  \ref{main1} to deduce Theorem \ref{Alexset}. 
Also notice from Theorem \ref{Alexset} that  if $E=\left(\begin{matrix}
A&A\\
A&A
\end{matrix}\right)$ for some $A\subseteq \mathbb F_q$ with $|A|\ge C q^{3/4}$, then for any $i, j\ne 0,$ we have
$$ \det((E\cap D_i) + (E\cap D_j))=\mathbb F_q.$$ 

We now address an extension of Corollary \ref{maincor}. For a fixed $\ell \in \mathbb N$ and $E\subseteq M_2(\mathbb F_q),$ we define 
$$ \ell E=\underbrace{E + \dots + E}_{\ell~\textup{terms}}\subseteq M_2(\mathbb F_q).$$
In fact, in this case,  the threshold $q^2$ of Corollary \ref{maincor} can be improved whenever $\ell$ becomes larger as the following shows.
 
\begin{theorem}\label{thm0}
Let $k\ge 2$ be an integer and $i$ be an element in $\mathbb{F}_q^*$. If $E \subseteq D_i$ and $|E|\ge  Cq^{\frac{6k-5}{4k-4}}$ for a sufficiently large constant $C$,  then we have 
\[\det(2kE)=\det(\underbrace{E + \dots + E}_{2k~\textup{terms}})\supseteq \mathbb{F}_q^*.\]
\end{theorem}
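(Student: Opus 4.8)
The plan is to reduce the $2k$-fold determinant statement to a single sumset-size estimate and then to an additive-energy bound, invoking the two-set results already available. Writing $2k=k+k$, put $G=kE=\underbrace{E+\cdots+E}_{k}$, so that $2kE=G+G$. By the Appendix result applied with $E=F=G$, if $|G|=|kE|>2q^{5/2}$, equivalently $|kE|^2>4q^5$, then $\det(2kE)=\det(G+G)\supseteq\mathbb{F}_q^*$, which is exactly the desired conclusion; the reason one obtains $\supseteq\mathbb{F}_q^*$ rather than $=\mathbb{F}_q$ is built into this tool, the singular value $t=0$ being produced separately. (Alternatively, if one can locate $\ell\in\mathbb{F}_q^*$ with $|kE\cap D_\ell|\ge 15q^2$, then Corollary \ref{maincor} applied to $kE\cap D_\ell\subseteq D_\ell$ gives $\det\big((kE\cap D_\ell)+(kE\cap D_\ell)\big)=\mathbb{F}_q$, and $(kE\cap D_\ell)+(kE\cap D_\ell)\subseteq 2kE$.) Either way the problem is reduced to a lower bound on the size of the iterated sumset $kE$.

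To bound $|kE|$ from below I would use Cauchy--Schwarz in the form $|kE|\ge |E|^{2k}/E_k$, where $E_k=\sum_x r_k(x)^2$ is the $k$-fold additive energy and $r_k(x)=\#\{(x_1,\dots,x_k)\in E^k:x_1+\cdots+x_k=x\}$. Thus it suffices to prove the energy bound $E_k\ll |E|^{2k}/q^{5/2}$ whenever $|E|\ge Cq^{(6k-5)/(4k-4)}$. Passing to the Fourier side through the $\odot$-product machinery of the paper, $E_k=q^{-4}\sum_{\xi}|\widehat{1_E}(\xi)|^{2k}$; the frequency $\xi=0$ contributes the main term $|E|^{2k}/q^4$, which is already of the right order, and everything rests on controlling $\sum_{\xi\ne 0}|\widehat{1_E}(\xi)|^{2k}$. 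Here the geometry of $D_i$ enters: the estimate $|D_i\cap(D_i+v)|\ll q^2$ for $v\ne 0$ bounds the autocorrelations $r_{E-E}(v)=|E\cap(E+v)|$, and, crucially, for $i\ne 0$ the quadric $D_i$ contains no affine $2$-plane (a maximal isotropic subspace of the determinant form is self-orthogonal and meets $D_i$ only in $\det=0$), so a set with $|E|>q$ cannot concentrate on a low-dimensional affine subspace. Feeding a bound of the shape $\max_{\xi\ne 0}|\widehat{1_E}(\xi)|\ll q^{a}|E|^{1/2}$ into $\sum_{\xi\ne0}|\widehat{1_E}(\xi)|^{2k}\le(\max_{\xi\ne0}|\widehat{1_E}(\xi)|)^{2k-2}\sum_{\xi}|\widehat{1_E}(\xi)|^2$ together with Plancherel ($\sum_\xi|\widehat{1_E}(\xi)|^2=q^4|E|$), and optimizing the resulting inequality in $|E|$ against $k$, is what should produce the threshold exponent $\tfrac{6k-5}{4k-4}=\tfrac32+\tfrac1{4(k-1)}$; the gain as $k$ grows comes precisely from the extra smoothing in the $(2k)$-th moment, which pushes the exponent down toward the sharp value $3/2$.

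The hard part will be the uniform control of the off-diagonal mass $\sum_{\xi\ne0}|\widehat{1_E}(\xi)|^{2k}$ for \emph{every} $E\subseteq D_i$, and in particular the behaviour of $\max_{\xi\ne0}|\widehat{1_E}(\xi)|$. For a set concentrated on a section of $D_i$ by a single hyperplane this quantity is as large as $|E|$, so the naive bound degenerates; the point is that once $|E|\gg q$ such concentration can only persist down to a nondegenerate $2$-dimensional sub-quadric, and one must show this residual concentration is still compatible with $E_k\ll |E|^{2k}/q^{5/2}$. I expect this to require either a clean restriction/extension estimate for subsets of the determinant quadric, or a dichotomy: when $E$ is spread out the Fourier bound above closes, while when $E$ concentrates on a proper sub-quadric one descends to a lower-dimensional determinant problem and argues by induction (or by a Cauchy--Davenport-type argument along the fibres of the restricted form). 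This concentration regime is exactly the one realized by the sharp example, which is why the method cannot go below $q^{3/2}$ and why the correction term $\tfrac1{4(k-1)}$ is the genuine loss of the argument for finite $k$.
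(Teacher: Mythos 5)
Your opening reduction coincides exactly with the paper's: write $2kE = kE + kE$ and feed $G=kE$ into the Appendix bound (Proposition \ref{thm3}), so that everything reduces to the single estimate $|kE|\gg q^{5/2}$, and indeed the paper derives the threshold $q^{\frac{6k-5}{4k-4}}$ from the lower bound $|kE|\gg \min\{q|E|,\, |E|^{2k-2}/q^{3k-5}\}$ (Lemma \ref{core1}). The gap is in how you propose to obtain that lower bound. The Cauchy--Schwarz step $|kE|\ge |E|^{2k}/E_k$ is fine, but your mechanism for proving $E_k\ll |E|^{2k}/q^{5/2}$ --- inserting $\max_{\xi\ne 0}|\widehat{1_E}(\xi)|\ll q^a|E|^{1/2}$ into the $2k$-th moment --- fails for arbitrary $E\subseteq D_i$, and not just in a borderline way: a hyperplane section of $D_i$ has size $\sim q^2$, a set $E$ contained in it has $\max_{\xi\ne0}|\widehat{1_E}(\xi)|\sim|E|$, and then the moment bound would require $|E|\gg q^{5/2}$, which is impossible since the relevant threshold $q^{3/2+\frac{1}{4(k-1)}}\le q^{7/4}$ sits well below $q^2$. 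You acknowledge this degeneration yourself, but the two escape routes you name --- a restriction/extension estimate for the determinant quadric, or a concentration dichotomy with descent to sub-quadrics --- are exactly the hard content and are not carried out; the heuristic facts you invoke ($|D_i\cap(D_i+v)|\ll q^2$, absence of $2$-planes in $D_i$ for $i\ne0$) do not by themselves control $\sum_{\xi\ne0}|\widehat{1_E}(\xi)|^{2k}$. So as written this is a program, not a proof.

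It is worth recording how the paper closes this gap, because it sidesteps higher energies entirely. The only energy estimate proved is the $k=2$ one, $\Lambda(E,F)\ll q^{-1}|E|^2|F|+q|E||F|+q|E|^{3/2}|F|^{1/2}$ (Proposition \ref{Proposition_Lambda}), whose proof is not a pointwise Fourier bound but a geometric argument with the $\odot$-product: one dilates $E$ and $F$ along lines through the origin, counts $\odot$-orthogonal pairs via Lemma \ref{odotzero}, and controls the overcounting by the fact that a line meets a translate of $D_i$ in at most two points. This yields the base case $|E+E|\gg\min\{q|E|,|E|^2/q\}$ (Lemma \ref{corecol}), and then $|kE|$ is grown by induction using the asymmetric bound $|E+F|\gg\min\{q|E|,\,|E|^2|F|/q^3\}$ for $E\subseteq D_i$ and \emph{arbitrary} $F$ (Lemma \ref{thm090}, imported from \cite{Ye}), applied with $F=(k-1)E$; the hypothesis $|E|\ge Cq^{3/2}$ is what absorbs the intermediate term $|E|^3/q^2$ in that induction. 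Two smaller points: Proposition \ref{thm3} only ever yields $\det(E+F)\supseteq\mathbb F_q^*$, and the value $t=0$ is not ``produced separately'' anywhere --- Theorem \ref{thm0} simply does not claim it; and your pigeonhole fallback (find $\ell\ne0$ with $|kE\cap D_\ell|\ge 15q^2$ and apply Corollary \ref{maincor}) needs $|kE|\gg q^3$ rather than $q^{5/2}$, since all of $kE$ could a priori sit near $D_0$, and tracing $|kE|\gg |E|^{2k-2}/q^{3k-5}\ge Cq^3$ gives the worse threshold $|E|\ge Cq^{\frac{3}{2}+\frac{1}{2(k-1)}}$ --- so even granting the sumset growth, that variant loses against the stated exponent.
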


It follows from Theorem \ref{thm0} that if $k$ is large enough, then $\det(2kE)\supseteq \mathbb{F}_q^*$ whenever the size of $E$ is close to $q^{\frac{3}{2}}$. 
However, in general, one can not expect to go lower than $q^{\frac{3}{2}}$. To see this, let $q=p^2$ for some odd prime $p$, and $E$ be the special linear group $SL_2(\mathbb{F}_p)$. Then it is obvious that  $|E|\sim p^{3}=q^{3/2}$. But since   $2kE$ is a subset of $M_2(\mathbb{F}_p)$ for any $k,$ we have $\det(2kE)\subseteq \mathbb{F}_p\subsetneq \mathbb{F}_q$.

\smallskip
Lastly, we would like to say a few words on the exposition of the paper. Unlike in the literature, we elaborated on finding  explicit constants $C$ for the bounds in Theorem \ref{main1} and Corollary \ref{maincor}. This asked us to write out almost all details for readers, which had the exposition  a bit lengthy, because they have their own distinctions and  some subtleties  even though some of them look similar.
  
\subsection{Outline of this paper} The remaining parts of this paper are organized to provide the complete proofs of our main theorems. In Section \ref{sec2N}, we summarize the background knowledge of the discrete Fourier analysis which will be used as a main tool. In particular, a new operation called the \textit{Odot-product} is introduced. Section \ref{sec3N} is designed to prove Theorem \ref{main1} whose sharpness is shown in Section \ref{sec4N}. In Section \ref{secEx}, a proof of Theorem \ref{Alexset} is given. In Section \ref{sec5N}, we obtain a lower bound on the cardinality of the sum of two matrix sets,
which will play a crucial role in proving Theorem \ref{thm0}. In the final section, we complete the proof of Theorem \ref{thm0}.
\subsection{Acknowledgement:} The authors would like to thank Igor Shparlinski  for introducing the paper of Li and Hu \cite{li} to them.

\section{Preliminaries}\label{sec2N}

In this section, we review the discrete Fourier analysis and exponential sums. In addition, we introduce the so-called   \textit{Odot-product} on $\MFq$ and investigate its properties which play a key role in proving our main results. 
\subsection{Discrete Fourier analysis and exponential sums} 
Throughout this paper, we will denote by $\chi: \mathbb F_q \to \mathbb S^1$ the canonical additive character of $\mathbb F_q.$ For instance, if $q$ is prime, then we have
$\chi(t)=e^{2\pi i t/q}$. If $q=p^n$ for some odd prime $p$, then we take
$\chi(t)=e^{2\pi i Tr(t)/p}$ for all $t\in \mathbb F_q,$
where $Tr$ denotes the trace function from $\mathbb F_q$ to $\mathbb F_p$ defined by
$$Tr(t)=t+t^p+ t^{p^2}+\cdots + t^{p^{n-1}} \in \mathbb F_p.$$ Recall that the character $\chi$ enjoys the orthogonality property; for any $m\in \mathbb F_q^d, ~d\ge 1$,
$$ \sum_{x\in \mathbb F_q^d} \chi(m\cdot x)=\left\{ \begin{array}{ll} 0  
\quad&\mbox{if}~~ m\neq (0,\dots,0)\\
q^d  \quad &\mbox{if} ~~m= (0,\dots,0), \end{array}\right.$$
where  $m\cdot x$ denotes the usual dot-product notation.
Given a complex-valued function $f$ defined on $\mathbb F_q^d,$ the Fourier transform of $f$ is defined by
$$ \widehat{f}(m):=q^{-d} \sum_{x\in \mathbb F_q^d} \chi(-m\cdot x) f(x).$$
The Plancherel theorem in this context says that
$$ \sum_{m\in \mathbb F_q^d} |\widehat{f}(m)|^2 = \frac{1}{q^d} \sum_{x\in \mathbb F_q^d} |f(x)|^2.$$
In particular, if $E \subseteq \mathbb F_q^d,$ then
$$ \sum_{m\in \mathbb F_q^d} |\widehat{E}(m)|^2 = \frac{|E|}{q^d}.$$
Here, throughout this paper, we identify the set $E \subseteq \mathbb F_q^d$ with the indicator function $1_E$ of the set $E.$\\

Let $\eta:\mathbb F_q^*\rightarrow  \mathbb S^1$ be the quadratic character of $\mathbb F_q^*$, i.e., a group homomorphism defined by $\eta(t)=1$ if $t$ is a square, and $-1$ otherwise. 
Recall that the orthogonality property of $\eta$ states that for any $a\in \mathbb F_q^*,$
$$ \sum_{t\in \mathbb F_q^*} \eta(at) =0.$$
Next, we collect  well-known properties of the Gauss sum and the Kloosterman sum. 
Let us begin by giving the definition of the Gauss sum.  The Gauss sum $G_a(\eta, \chi)$ associated with the characters $\chi$,  $\eta$, and an element $a\in \mathbb F_q^*$ is defined by
$$ G_a(\eta, \chi)=\sum_{t\in \mathbb F_q^*} \eta(t) \chi(at).$$
It is well known  that $|G_a(\eta, \chi))|=q^{1/2}$ for all $a\in \mathbb F_q^*.$ 
Moreover, the value of the Gauss sum for $a=1$ is explicitly given as follows. 
\begin{lemma}\cite[Theorem 5.15]{LN97}\label{ExplicitGauss}
Let ${\mathbb F}_q$ be a finite field with $ q= p^n$, where $p$ is an odd prime and $n \in {\mathbb N}.$
Then we have
$$G_1(\eta, \chi)=\left\{\begin{array}{ll}  {(-1)}^{n-1} q^{\frac{1}{2}} \quad &\mbox{if} \quad p \equiv 1 \mod 4 \\
                    {(-1)}^{n-1} i^n q^{\frac{1}{2}} \quad &\mbox{if} \quad p\equiv 3 \mod 4.\end{array}\right.$$
\end{lemma}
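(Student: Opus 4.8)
The plan is to reduce $G_1(\eta,\chi)$ to the classical quadratic Gauss sum over the prime field and then propagate its value up the tower $\mathbb F_p\subseteq\mathbb F_{p^n}$. First I would rewrite the sum in squared form: since the fibre of $t\mapsto t^2$ over a nonzero $s$ has size $1+\eta(s)$ (and size $1$ over $s=0$), the orthogonality $\sum_{s\in\mathbb F_q}\chi(s)=0$ gives
$$ G_1(\eta,\chi)=\sum_{t\in\mathbb F_q}\chi(t^2). $$
Because $|G_a(\eta,\chi)|=q^{1/2}$ is already recorded above, the only thing left to pin down is the argument of $G_1$, which necessarily lies in $\{\pm q^{1/2},\pm i\,q^{1/2}\}$. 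Thus the entire content of the lemma is a phase computation, which I would split into a prime-field base case and a lifting step.

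The base case $n=1$ is where the real difficulty sits: it is exactly Gauss's sign theorem, and no absolute-value estimate can settle it. I would run Schur's eigenvalue argument. Write $\zeta=e^{2\pi i/p}$ and form the $p\times p$ matrix $V=(\zeta^{jk})_{0\le j,k\le p-1}$; its diagonal sum is $\mathrm{tr}(V)=\sum_{j}\zeta^{j^2}=G_1(\eta,\chi)$ by the reduction above. A one-line computation gives $V^2=pR$, where $R$ is the permutation matrix of $j\mapsto -j\pmod p$, so $V^4=p^2I$ and every eigenvalue of $F:=p^{-1/2}V$ lies in $\{1,-1,i,-i\}$; let $a,b,c,d$ be their multiplicities. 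The identities $\mathrm{tr}(F^4)=p$ and $\mathrm{tr}(F^2)=p^{-1}\mathrm{tr}(V^2)=1$ give $a+b=(p+1)/2$ and $c+d=(p-1)/2$, while the known magnitude forces $|\mathrm{tr}(F)|=1$, i.e. $(a-b)^2+(c-d)^2=1$. Together with the parity of $(p\pm1)/2$ this already pins $\mathrm{tr}(F)$ to be real when $p\equiv1\pmod4$ and purely imaginary when $p\equiv3\pmod4$; the remaining overall sign is then fixed by evaluating the Vandermonde determinant $\det V=\prod_{j<k}(\zeta^k-\zeta^j)$ and matching its argument against $\det F=(-1)^b\,i^{\,c-d}$. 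Reading off $\mathrm{tr}(V)$ yields $G_1(\eta_p,\chi_p)=\sqrt p$ for $p\equiv1\pmod4$ and $G_1(\eta_p,\chi_p)=i\sqrt p$ for $p\equiv3\pmod4$. (Alternatively the same sign follows analytically from Poisson summation via Landsberg--Schaar reciprocity, but the eigenvalue route is purely algebraic.)

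Finally I would transport this to $\mathbb F_{p^n}$ using the Hasse--Davenport lifting relation. The canonical characters here are literally trace/norm lifts from $\mathbb F_p$: by definition $\chi=\chi_p\circ\mathrm{Tr}$, and since the norm satisfies $N(x)=x^{(p^n-1)/(p-1)}$ one checks $\eta_p(N(x))=N(x)^{(p-1)/2}=x^{(p^n-1)/2}=\eta(x)$, so $\eta=\eta_p\circ N$. Hasse--Davenport then gives $G_1(\eta,\chi)=(-1)^{n-1}\,G_1(\eta_p,\chi_p)^{\,n}$. Substituting the base-case values finishes the proof: for $p\equiv1\pmod4$ we get $(-1)^{n-1}p^{n/2}=(-1)^{n-1}q^{1/2}$, and for $p\equiv3\pmod4$ we get $(-1)^{n-1}(i\sqrt p)^n=(-1)^{n-1}i^{\,n}q^{1/2}$, matching the two cases of the statement. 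The only imported black box is the Hasse--Davenport relation itself; if a fully self-contained account is wanted it can be proved by factoring the associated $L$-function over the tower, but the essential arithmetic — the prime-field sign treated in the previous paragraph — is the step I expect to be the crux.
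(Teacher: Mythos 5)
The paper does not actually prove this lemma: it is imported wholesale by citation to \cite[Theorem~5.15]{LN97}, so any honest proof you give is ``extra'' relative to the text. That said, your route is correct and is in substance the same one the cited source uses: reduce $G_1(\eta,\chi)=\sum_{t\in\mathbb{F}_q}\chi(t^2)$ via the fibre count $1+\eta(s)$, settle the prime-field sign, and lift through the tower by the Davenport--Hasse relation (which is exactly \cite[Theorem~5.14]{LN97}, the step immediately preceding the quoted theorem). Your base-case argument checks out: $V^2=pR$ with $\operatorname{tr}(R)=1$ (only $j=0$ satisfies $2j\equiv 0 \bmod p$ for odd $p$), so $a+b-c-d=1$ and $a+b+c+d=p$ give the stated multiplicity relations; the parity of $(p\pm 1)/2$ combined with $(a-b)^2+(c-d)^2=1$ correctly forces $\operatorname{tr}(F)$ real for $p\equiv 1 \pmod 4$ and purely imaginary for $p\equiv 3\pmod 4$; and the final sign does require the explicit Vandermonde evaluation you deferred, namely computing the argument of $\det V=\prod_{j<k}(\zeta^k-\zeta^j)$ through the factorization $\zeta^k-\zeta^j=2i\,\zeta^{(j+k)/2}\sin\bigl(\pi(k-j)/p\bigr)$ and matching against $\det F=(-1)^b i^{\,c-d}$ --- standard, but it is genuinely where the sign lives, so do not wave it away in a written-up version. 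Two small remarks: invoking $\lvert G_a(\eta,\chi)\rvert=q^{1/2}$ is legitimate since the paper records it just before the lemma; and your verification $\eta=\eta_p\circ N$ via $N(x)=x^{(p^n-1)/(p-1)}$ is exactly the hypothesis needed for Davenport--Hasse, so the lift $G_1(\eta,\chi)=(-1)^{n-1}G_1(\eta_p,\chi_p)^n$ and the two substitutions $(-1)^{n-1}p^{n/2}$ and $(-1)^{n-1}(i\sqrt{p})^n=(-1)^{n-1}i^n q^{1/2}$ reproduce the lemma's two cases precisely. In short: the paper buys brevity by citing; you supply the classical Schur-plus-Davenport--Hasse proof with the black boxes (Davenport--Hasse, the Vandermonde sign) correctly identified, and nothing in the outline would fail when expanded.
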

 
We notice that $\eta(-1)=1$ if and only if $-1$ is a square number of $\mathbb F_q$ (namely, $q\equiv 1 \mod 4$); or equivalently, $\eta(-1)=-1$ if and only if $-1$ is not a square number of $\mathbb F_q$ (namely, $q\equiv 3\mod 4$). From this fact and Lemma \ref{ExplicitGauss}, it follows that
\begin{equation}\label{Gauss-1} \eta(-1)G_1^2=q.\end{equation}

Hereafter, to use a simple notation, we write $G_1$ for $G_1(\eta, \chi).$\\

The following result is a corollary of Lemma 4.3 in \cite{HIKSU}.  For the reader's convenience, we provide a proof here.
\begin{lemma}\label{Comsquare} For $a\in \mathbb F_q^*, b\in \mathbb F_q,$ we have
$$ \sum_{s\in \mathbb F_q^*} \chi(as^2+bs) = \eta(a) G_1 ~\chi\left(\frac{b^2}{-4a}\right) -1.$$
\end{lemma}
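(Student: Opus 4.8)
The plan is to complete the square in the exponent so as to reduce the sum to a pure quadratic Gauss sum, and then to evaluate the latter using the orthogonality of $\chi$ together with the definition of $G_a(\eta,\chi)$.

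First I would rewrite the quadratic form. Since $a\neq 0$ and $q$ is odd, I can complete the square as $as^2+bs=a\big(s+\tfrac{b}{2a}\big)^2-\tfrac{b^2}{4a}$, so that $\chi(as^2+bs)=\chi\big(\tfrac{b^2}{-4a}\big)\,\chi\big(a(s+\tfrac{b}{2a})^2\big)$. The factor $\chi(b^2/(-4a))$ does not depend on $s$ and can be pulled outside the sum. The only delicate point is that the summation runs over $\mathbb F_q^*$ rather than all of $\mathbb F_q$: rather than tracking where the excluded point $s=0$ is sent, I would first extend the sum to all of $\mathbb F_q$ and afterwards subtract the $s=0$ term, which contributes $\chi(0)=1$. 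After the bijective substitution $u=s+\tfrac{b}{2a}$ on $\mathbb F_q$, the problem reduces to evaluating $\sum_{u\in\mathbb F_q}\chi(au^2)$.

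To compute this pure quadratic Gauss sum, I would count preimages under squaring: for $v\in\mathbb F_q^*$ the number of $u\in\mathbb F_q$ with $u^2=v$ equals $1+\eta(v)$, while $v=0$ accounts for a single term. Splitting the sum accordingly gives $\sum_{u\in\mathbb F_q}\chi(au^2)=1+\sum_{v\in\mathbb F_q^*}\chi(av)+\sum_{v\in\mathbb F_q^*}\eta(v)\chi(av)$. The first sum equals $-1$ by the orthogonality of $\chi$ (here $a\neq0$), and the second is precisely $G_a(\eta,\chi)$, which equals $\eta(a)G_1$ after the substitution $v\mapsto v/a$ and using $\eta(a)^{-1}=\eta(a)$. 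Hence $\sum_{u\in\mathbb F_q}\chi(au^2)=\eta(a)G_1$.

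Combining these steps yields $\sum_{s\in\mathbb F_q}\chi(as^2+bs)=\eta(a)G_1\,\chi(b^2/(-4a))$, and subtracting the $s=0$ contribution $\chi(0)=1$ produces the asserted identity. I do not expect a serious obstacle here; the only things requiring care are the passage from $\mathbb F_q^*$ to the full sum over $\mathbb F_q$ (cleanly corrected by the single excluded term) and the elementary verification that $G_a(\eta,\chi)=\eta(a)G_1$.
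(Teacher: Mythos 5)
Your proposal is correct and follows essentially the same route as the paper: complete the square, extend the sum to all of $\mathbb F_q$ and subtract the $s=0$ contribution $\chi(0)=1$, and reduce to $\sum_{u\in\mathbb F_q}\chi(au^2)=\eta(a)G_1$. The only difference is that you prove this last identity in full (via the preimage count $1+\eta(v)$ and the substitution showing $G_a(\eta,\chi)=\eta(a)G_1$), whereas the paper simply cites it as a known observation.
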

 \begin{proof} Since $\chi(0)=1$, it is enough to prove that
 \begin{equation}\label{eqG} \sum_{s\in \mathbb F_q} \chi(as^2+bs)= \eta(a) G_1~\chi\left(\frac{b^2}{-4a}\right).\end{equation}
 Since $as^2+bs=a\left(s+ \frac{b}{2a}\right)^2-\frac{b^2}{4a},$ by a change of variables we have
 $$ \sum_{s\in \mathbb F_q} \chi(as^2+bs) = \sum_{s\in \mathbb F_q} \chi (as^2) \chi\left(\frac{b^2}{-4a}\right).$$
 Thus, the lemma follows from the observation that if $a\ne 0$, then
 $$ \sum_{s\in \mathbb F_q} \chi(as^2) = \eta(a) G_1.$$
 \end{proof}
We will also utilize the following properties of the Gauss sum which can be  proved by using a change of variables and properties of the quadratic character $\eta.$
For $a,b\ne 0$, we have
\begin{equation}\label{Gauss}\sum_{s\in \mathbb F_q^*} \eta(as) \chi(bs) =\sum_{s\in \mathbb F_q^*} \eta(as^{-1}) \chi(bs)= \eta(ab) G_1. 
\end{equation}

%%%%%%%%%%%%%%%%%%%%%%%%%%%%%%%%%%%%%%%%%%%%%%%%%%%%%%%%%%%%%%%%%%%%%%%%%%%%%%%%%%%%%%%%%%%%%%%%%%%%%

We review estimates on the (generalized) Kloosterman sum which can be found in \cite{IK04, LN97}.
An estimate of the Kloosterman sum  is given by
$$\left|\sum_{t\in {\mathbb F}_q^*}\chi(at+bt^{-1})\right| \le 2q^{\frac{1}{2}} \quad \mbox{for}\quad a,b \in {\mathbb F}_q^*,$$
and an estimate of the generalized Kloosterman sum is given by 
$$\left|\sum_{t\in {\mathbb F}_q^*}\eta(t)\chi(at+bt^{-1})\right| \le 2
q^{\frac{1}{2}} \quad \mbox{for} \quad a,b \in {\mathbb F}_q.$$

\subsection{Odot-product and its properties}

In this subsection, we will define
the so-called \textit{Odot-product} on the vector space $M_2(\mathbb F_q)=\mathbb F_q^4$, which can be compared with the ordinary inner product on $\mathbb F_q^4$. Then we will set up a main tool, i.e., a discrete Fourier theoretic machinery for the Odot-product, which is modeled on the well-established (discrete) one for the ordinary inner product. 

\begin{definition} [Odot-product]
 For $x=\left(\begin{matrix}
x_1&x_2\\
x_3&x_4
\end{matrix}\right),~
y=\left(\begin{matrix}
y_1&y_2\\
y_3&y_4
\end{matrix}\right) \in M_2(\mathbb F_q),$ 
define 
 \begin{equation}\label{Dodot} x\odot y:= x_1y_4-x_2y_3-x_3y_2+x_4y_1.\end{equation} 
Let us call $\odot$ the Odot-product on $ M_2(\mathbb F_q)$.
  \end{definition}
For $x\in M_2(\mathbb F_q)$, we will often use the notation $\|x\|_*$ to denote $\det(x).$ Namely, $$ \|x\|_*=\det(x)=x_1x_4-x_2x_3.$$
% \bigskip
We collect basic properties of the Odot-product which follow easily from the definitions of the Odot-product and $\|\cdot\|_*.$   We leave the details to readers.
\begin{lemma} \label{Odotlem}
 Let $x,y\in M_2(\mathbb F_q), c\in \mathbb F_q.$ Then 
the Odot-product $\odot$ satisfies the followings.
 $$ x\odot y=y\odot x, \quad c(x\odot y)= (cx)\odot y=x\odot (cy), \quad \|cx\|_*=c^2\|x\|_*, $$
 $$x\odot x=2\|x\|_* \quad\mbox{and} \quad \|x\pm y\|_*=\|x\|_*+\|y\|_*\pm x\odot y.$$
 \end{lemma}
 %\bigskip
 One can check that the following orthogonality  of $\chi$ holds for the Odot-product: for $m\in M_2(\mathbb F_q),$
 \begin{equation}\label{odotorth} \sum_{x\in M_2(\mathbb F_q)} \chi(m\odot x)=\left\{\begin{array}{ll} 0&\quad\mbox{if}~~m\ne \mathbf{0},\\
                                                                      q^4&\quad\mbox{if}~~m=\mathbf{0}. \end{array} \right.\end{equation}
 Given a function $f:M_2(\mathbb F_q)  \to \mathbb C$, we define 
 \begin{equation}\label{DefFTi}\widetilde{f}(x):=\sum_{m\in M_2(\mathbb F_q)} \chi(-x\odot m) f(m).\end{equation}
For instance, for $i\in \mathbb F_q^*$ and $y\in M_2(\mathbb F_q)$, we have 
$$\widetilde{D_i} (y)=\sum_{x\in  D_i} \chi(-x\odot y) .$$

\begin{lemma}\label{Focirl}
For $i\in \mathbb F_q^*$ and $y\in M_2(\mathbb F_q)$, $\widetilde{D_i} (y)$ is expressed as
$$\widetilde{D_i} (y)   = q^3 \delta_0(y) + q\sum_{r\in \mathbb F_q^*} \chi\left(-ir-\frac{\|y\|_*}{r}\right),$$
where $\delta_0(y)=1$ if $y=\mathbf{0},$ and $0$ otherwise.
\end{lemma}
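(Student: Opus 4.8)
The plan is to write the indicator function of $D_i$ as a character sum over an auxiliary dummy variable, swap the order of summation, and then evaluate the resulting bilinear character sum in the matrix entries by exploiting the fact that the determinant couples the entries $(x_1,x_4)$ and $(x_2,x_3)$ in two independent blocks. First I would use the orthogonality of $\chi$ on $\mathbb F_q$ to detect the condition $\det(x)=i$: writing $\|x\|_*=\det(x)$, we have
$$1_{D_i}(x) = \frac{1}{q} \sum_{r\in \mathbb F_q} \chi\big(r(\|x\|_*-i)\big).$$
Substituting this into $\widetilde{D_i}(y)=\sum_{x\in M_2(\mathbb F_q)} 1_{D_i}(x)\,\chi(-x\odot y)$ and exchanging the two sums yields
$$\widetilde{D_i}(y) = \frac{1}{q} \sum_{r\in \mathbb F_q} \chi(-ir) \sum_{x\in M_2(\mathbb F_q)} \chi\big(r\|x\|_*-x\odot y\big).$$

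The term $r=0$ is handled at once by the Odot-orthogonality \eqref{odotorth}: the inner sum equals $q^4\delta_0(y)$, contributing $q^3\delta_0(y)$. For $r\ne 0$ I would expand $r\|x\|_*-x\odot y$ in the entries of $x$. Since $\|x\|_*=x_1x_4-x_2x_3$ and $x\odot y=x_1y_4-x_2y_3-x_3y_2+x_4y_1$, the exponent separates into a part involving only $(x_1,x_4)$ and a part involving only $(x_2,x_3)$, so the inner sum factors as a product of two sums. Each factor has the shape $\sum_{u,v}\chi(\pm r\,uv+\alpha u+\beta v)$; summing over one variable forces a linear condition on the other through the orthogonality of $\chi$ (this is exactly where $r\ne 0$ is used), leaving a single clean character value. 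A short computation then gives
$$\sum_{x\in M_2(\mathbb F_q)} \chi\big(r\|x\|_*-x\odot y\big) = q^2\,\chi\!\left(-\frac{\|y\|_*}{r}\right).$$
Inserting this back and summing over $r\in \mathbb F_q^*$ produces exactly $q\sum_{r\in \mathbb F_q^*}\chi(-ir-\|y\|_*/r)$, which combined with the $r=0$ contribution is the claimed formula.

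I do not anticipate any serious obstacle here; the only point requiring a little care is the bookkeeping in the factorization, namely verifying that the determinant genuinely decouples the entry pairs $(x_1,x_4)$ and $(x_2,x_3)$ and that the linear terms coming from $x\odot y$ distribute correctly between the two blocks, so that the residual character exponents recombine into $-\|y\|_*/r=-(y_1y_4-y_2y_3)/r$. Everything else is a routine application of the orthogonality relations recorded in Section \ref{sec2N}.
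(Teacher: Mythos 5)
Your proposal is correct and follows essentially the same route as the paper's own proof: detect the constraint $\det(x)=i$ by orthogonality of $\chi$, split off the $r=0$ term via the Odot-orthogonality to get $q^3\delta_0(y)$, and for $r\ne 0$ evaluate the inner sum by summing over one entry in each of the decoupled blocks $(x_1,x_4)$ and $(x_2,x_3)$, which forces the linear conditions $x_4=y_4/r$, $x_3=y_3/r$ and yields $q^2\chi\left(-\|y\|_*/r\right)$. This is exactly the computation the paper carries out (it phrases the last step as ``a calculation of the sums over $x_1,x_2$ using the orthogonality of $\chi$'').
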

\begin{proof}
By the orthogonality  of $\chi$, we can write that  
$$ \widetilde{D_i} (y)=q^{-1}\sum_{x\in M_2(\mathbb F_q)} \sum_{r\in \mathbb F_q} \chi(r(\|x\|_* -i)) \chi(-x\odot y)$$
$$=q^3 \delta_0(y) + q^{-1}\sum_{x\in M_2(\mathbb F_q)} \sum_{r\ne 0} \chi(r(\|x\|_* -i)) \chi(-x\odot y)$$
$$= q^3\delta_0(y) + q^{-1} \sum_{r\ne 0} \chi(-ir) \sum_{x_1, x_2,x_3, x_4\in \mathbb F_q} \chi( rx_1x_4-rx_2x_3 - x_1y_4+x_2y_3+x_3y_2-x_4y_1).$$
Then the lemma follows from a calculation of the sums over $x_1, x_2\in \mathbb F_q$ using the orthogonality  of $\chi$.
\end{proof}

\section{The key lemma and proof of Theorem \ref{main1}} \label{sec3N}
This section is dedicated to proving Theorem \ref{main1}. We begin by introducing  notations for our interested quantities.
\smallskip

\begin{notation} Let $E, F$ be sets in $M_2(\mathbb F_q).$
\begin{enumerate}\item For $t\in \mathbb F_q$, we denote by $N_t(E,F)$  the number of pairs $(x,y)\in E\times F$ such that $\det(x+y)=t.$ 
\item For $\ell\in \mathbb F_q,$ we let $W_\ell(E,F)$ denote the number of pairs $(x,y)\in E\times F$ such that $x\odot y=\ell.$
\item For $\ell\in \mathbb F_q,$ we write $R_\ell(E,F)$ for $W_\ell(E,F)-|E||F|/q.$ Namely, 
$$ R_\ell(E,F):= W_\ell(E,F)-\frac{|E||F|}{q}.$$
\item We denote by $M(E,F)$ the maximum value of the set $\{W_\ell(E,F): \ell\in \mathbb F_q\}.$ Namely,
$$M(E,F):= \max_{\ell\in \mathbb F_q} \sum_{x\in E, y\in F: x\odot y=\ell} 1.$$ 
\end{enumerate}
\end{notation}
 A bound on $N_t(E,F)$  plays an essential role in proving Theorem \ref{main1}, as well as it is interesting on its own right.  To obtain an upper bound for $N_t(E,F),$  we need  a couple of technical lemmas. 

\begin{lemma}\label{twoC} For $i, j\in \mathbb F_q^*,$ let $E$ and $F$ be subsets of $D_i$ and $D_j$, respectively. Suppose that for all $\ell\in \mathbb F_q,$ the following two inequalities hold:
\begin{equation}\label{Hp}  |R_\ell(E,F)|^2 \le 2q^2|E||F|+ 7|E||F|^2+ 2q|E| M(F,F)\end{equation}
and
\begin{equation}\label{Hp1} |R_\ell(F,F)|^2 \le 2q^2|F|^2+ 7|F|^3+ 2q|F| M(F,F). \end{equation}
Then, for every $t\in \mathbb F_q$, we have
$$ \left|N_t(E,F)-\frac{|E||F|}{q}\right|\le \sqrt{18q^2|E||F| + 11|E||F|^2+ 4\sqrt{7}q|E||F|^{\frac{3}{2}}}.$$
\end{lemma}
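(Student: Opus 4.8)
The plan is to first convert the statement about $N_t(E,F)$ into one about the Odot-product counting function $W_\ell(E,F)$, and then to bound the resulting error term $R_\ell$ using the two hypotheses \eqref{Hp} and \eqref{Hp1}. The crucial input is the last identity of Lemma \ref{Odotlem}: since $E\subseteq D_i$ and $F\subseteq D_j$, every $x\in E$ and $y\in F$ satisfy $\|x\|_*=i$ and $\|y\|_*=j$, so $\det(x+y)=\|x+y\|_*=i+j+x\odot y$. Hence $\det(x+y)=t$ holds if and only if $x\odot y=t-i-j$, which gives the exact identity $N_t(E,F)=W_{t-i-j}(E,F)$ and therefore
$$N_t(E,F)-\frac{|E||F|}{q}=R_{t-i-j}(E,F).$$
Thus it suffices to bound $|R_{t-i-j}(E,F)|$, and by hypothesis \eqref{Hp} this reduces to controlling the quantity $M(F,F)$.

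The main obstacle, and the heart of the argument, is that \eqref{Hp1} bounds $|R_\ell(F,F)|$ in terms of $M(F,F)$ itself, producing a self-referential inequality for $M(F,F)$. To untangle it, I would first note that $\max_\ell W_\ell(F,F)$ dominates the average $\frac1q\sum_\ell W_\ell(F,F)=|F|^2/q$, so $M(F,F)\ge |F|^2/q$, and moreover $M(F,F)=|F|^2/q+\max_\ell R_\ell(F,F)$. Writing $M:=M(F,F)$ and feeding \eqref{Hp1} into this, I obtain an inequality of the shape
$$M-\frac{|F|^2}{q}\le \sqrt{2q^2|F|^2+7|F|^3+2q|F|\,M}.$$
Since the left-hand side is nonnegative, squaring is legitimate and yields a quadratic inequality $M^2-(2A+C)M+(A^2-B)\le 0$ in $M$, with $A=|F|^2/q$, $C=2q|F|$, and $B=2q^2|F|^2+7|F|^3$. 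Solving for the larger root and simplifying the discriminant (which works out to $12q^2|F|^2+36|F|^3$) via $\sqrt{a+b}\le\sqrt a+\sqrt b$, I expect to reach a clean bound of the form
$$M(F,F)\le \frac{|F|^2}{q}+(1+\sqrt3)\,q|F|+3|F|^{3/2}.$$

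Finally, I would substitute this bound on $M(F,F)$ back into \eqref{Hp}. Multiplying the displayed estimate by $2q|E|$, the contribution $2q|E|\,M(F,F)$ produces the terms $2|E||F|^2$, $2(1+\sqrt3)q^2|E||F|$, and $6q|E||F|^{3/2}$, so that combining with $2q^2|E||F|+7|E||F|^2$ gives
$$|R_{t-i-j}(E,F)|^2\le (4+2\sqrt3)\,q^2|E||F|+9|E||F|^2+6q|E||F|^{3/2}.$$
It then remains only to check the elementary numerical inequalities $4+2\sqrt3\le 18$, $9\le 11$, and $6\le 4\sqrt7$, which show that the right-hand side is dominated by $18q^2|E||F|+11|E||F|^2+4\sqrt7\,q|E||F|^{3/2}$; taking square roots completes the proof. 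The only genuinely delicate point is the self-referential quadratic for $M(F,F)$, as everything else amounts to bookkeeping of constants.
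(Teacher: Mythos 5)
Your proposal is correct and follows essentially the same route as the paper: reduce $N_t(E,F)$ to $W_{t-i-j}(E,F)$ via $\|x+y\|_*=i+j+x\odot y$, then bound $M(F,F)$ from the self-referential inequality coming from \eqref{Hp1}, and substitute back into \eqref{Hp}. The only difference is how you untangle the implicit inequality $M-\tfrac{|F|^2}{q}\le\sqrt{B+CM}$: you square it and solve the resulting quadratic exactly (your discriminant computation $12q^2|F|^2+36|F|^3$ checks out), whereas the paper uses the cruder two-case bound $x\le a+bx^{1/2}\Rightarrow x\le 2\max\{a,bx^{1/2}\}$; your variant yields the slightly sharper intermediate constants $(4+2\sqrt{3})q^2|E||F|+9|E||F|^2+6q|E||F|^{3/2}$, which indeed dominate into the stated bound.
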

\begin{proof} By definition, we can write
 $$ N_t(E,F)=\sum_{x\in E, y\in F: \|x+y\|_*=t} 1.$$
 Since $E\subseteq D_i, F\subseteq D_j,$ by Lemma \ref{Odotlem}, this can be written as
 $$ N_t(E,F)=\sum_{x\in E, y\in F: x\odot y=t-i-j} 1.$$
 Letting $\ell=t-i-j$, we see that $N_t(E,F)=W_\ell(E,F).$ Hence, to prove the lemma, it suffices to show that for all $\ell \in \mathbb F_q,$ we have
 $$ |R_\ell(E,F)|^2\le 18q^2|E||F| + 11|E||F|^2+ 4\sqrt{7}q|E||F|^{\frac{3}{2}}.$$
Notice from the assumption \eqref{Hp}  that to prove the above inequality it is enough to show that 
\begin{equation}\label{goalf} M(F,F)\le \frac{2|F|^2}{q}+8 q|F| + 2\sqrt{7} |F|^{3/2}.\end{equation}
Since $W_\ell(F,F)= |F|^2/q +R_\ell(F,F)$ by definition, it is clear that
$$M(F,F)= \frac{|F|^2}{q} + \max_{\ell\in \mathbb F_q} R_{\ell}(F,F),$$ 
and the assumption \eqref{Hp1} implies that
$$\max_{\ell\in \mathbb F_q} R_{\ell}(F,F)\le \sqrt{2} q|F| + \sqrt{7} |F|^{3/2} + \sqrt{2} q^{1/2} |F|^{1/2}M(F,F)^{1/2}.$$ 
Therefore, we have
\begin{align*}M(F,F)&\le \frac{|F|^2}{q} + \sqrt{2} q|F| + \sqrt{7} |F|^{3/2} + \sqrt{2} q^{1/2} |F|^{1/2}M(F,F)^{1/2}\\
& \le 2\max\left\{ \frac{|F|^2}{q}+\sqrt{2} q|F| + \sqrt{7} |F|^{3/2}, ~\sqrt{2} q^{1/2} |F|^{1/2}M(F,F)^{1/2}\right\}.\end{align*}
From this estimate, we obtain the inequality \eqref{goalf} as follows:
\begin{align*} M(F,F)&\le \max\left\{ \frac{2|F|^2}{q}+2\sqrt{2} q|F| + 2\sqrt{7} |F|^{3/2}, ~8q|F|\right\}\\
& \le \frac{2|F|^2}{q}+8 q|F| + 2\sqrt{7} |F|^{3/2}.\end{align*}
\end{proof}

As we will see, Proposition \ref{MainThm} given in the last part of this section plays a key role in proving Theorem \ref{main1}. Notice that the proof of Proposition \ref{MainThm} uses bounds of several summations. To make the exposition better, we separately treat these summations in several lemmas. 
 %appearing in the following Lemmas \ref{Lemma-A(l)}, \ref{Lemma-B(l)}, \ref{Lemma-B(21)}, and \ref{Lemma-B(22)}. In order to make an exposition for the proof of the proposition simplify,  we begin by proving the lemmas in the followings.

\begin{lemma}\label{Lemma-A(l)}Let $F$ be a subset of $D_j$ with $j\in \mathbb F_q^*.$ Then,  for every $\ell \in \mathbb F_q,$ we have
 $$\mathcal{I}(\ell):=\sum_{\substack{y,y'\in F\\ s,s'\in {\mathbb F}_q^*}} \delta_0(s'y'-sy)
\chi\left(\ell(s'-s)\right)\le 2q|F|.$$  
\end{lemma}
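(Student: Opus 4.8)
The plan is to exploit the constraint $\delta_0(s'y'-sy)$, which is nonzero precisely when $s'y'=sy$, and to use the hypothesis $F\subseteq D_j$ with $j\neq 0$ to force a rigid relation between the pairs $(s,y)$ and $(s',y')$. First I would observe that if $s'y'=sy$, then taking determinants and using $\|cx\|_*=c^2\|x\|_*$ from Lemma \ref{Odotlem} gives $(s')^2 j = s^2 j$, hence $(s')^2=s^2$ and so $s'=\pm s$. Writing $y'=(s/s')y$, the scalar $c=s/s'$ satisfies $c^2=1$, so $y'=\pm y$. Since $y\neq \mathbf{0}$ (as $\det y=j\neq 0$) and $q$ is odd, the two possibilities $y'=y$ and $y'=-y$ are mutually exclusive, and each pins down $s'$ in terms of $s$: the case $y'=y$ forces $s'=s$, while $y'=-y$ forces $s'=-s$.

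With this dichotomy I would split $\mathcal{I}(\ell)$ into two disjoint contributions. The diagonal part, coming from $y'=y$ and $s'=s$, contributes $\chi(0)=1$ for each $y\in F$ and each $s\in\mathbb F_q^*$, giving exactly $|F|(q-1)$. The anti-diagonal part, coming from $y'=-y$ (which requires $-y\in F$) and $s'=-s$, contributes $\chi(\ell(-s-s))=\chi(-2\ell s)$ for each $y$ in the set $F':=\{y\in F:-y\in F\}$, so it equals $|F'|\sum_{s\in\mathbb F_q^*}\chi(-2\ell s)$.

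The remaining step is to evaluate the inner sum over $s$ by orthogonality of $\chi$, distinguishing $\ell=0$ from $\ell\neq 0$. When $\ell\neq 0$ we have $-2\ell\neq 0$, so $\sum_{s\in\mathbb F_q^*}\chi(-2\ell s)=-1$, and hence $\mathcal{I}(\ell)=|F|(q-1)-|F'|\le |F|(q-1)\le 2q|F|$. When $\ell=0$ the inner sum equals $q-1$, giving $\mathcal{I}(0)=(|F|+|F'|)(q-1)\le 2|F|(q-1)\le 2q|F|$ since $|F'|\le|F|$. In both cases the claimed bound follows.

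I do not anticipate a genuine obstacle here; the only point requiring care is the determinant argument ruling out scalar multiples other than $\pm 1$, which is exactly what keeps the sum small, together with keeping the two cases $\ell=0$ and $\ell\neq 0$ separate. The use of $q$ odd is essential, both to guarantee $2\neq 0$ (so that $-2\ell\neq 0$ when $\ell\neq 0$) and to ensure $y\neq -y$, so that the diagonal and anti-diagonal contributions do not overlap.
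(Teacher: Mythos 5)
Your proof is correct, and it is a genuine (if modest) reorganization of the paper's argument rather than a copy of it. Both proofs rest on the same two ingredients --- orthogonality of $\chi$ and the rigidity coming from $F\subseteq D_j$ with $j\neq 0$ --- but you deploy them in the opposite order, and this buys you more. The paper splits into $s=s'$ and $s\neq s'$, substitutes $a=s'$, $b=s/s'$, and for $\ell\neq 0$ disposes of the off-diagonal term by summing over $a$ first (getting $-1$), invoking the determinant constraint (there phrased as: $y'=by$ with $b\neq 0,1$ forces $b=-1$) only to handle the counting in the $\ell=0$ case. You instead classify the constraint set $\{s'y'=sy\}$ completely at the outset: taking determinants gives $(s')^2=s^2$, hence $s'=\pm s$ and correspondingly $y'=\pm y$, which reduces $\mathcal{I}(\ell)$ to a sum you can evaluate in closed form, namely $\mathcal{I}(\ell)=(q-1)|F|-|F'|$ for $\ell\neq 0$ and $\mathcal{I}(0)=(q-1)\left(|F|+|F'|\right)$ with $F'=\{y\in F:\,-y\in F\}$ --- an exact evaluation refining the paper's uniform bound $2(q-1)|F|$. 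The trade-off is that your argument uses the hypothesis $F\subseteq D_j$, $j\neq 0$, in both regimes of $\ell$, whereas the paper's treatment of the $\ell\neq 0$ off-diagonal needs no hypothesis on $F$ at all; for this particular lemma your route is the cleaner and more informative one, and the points you flag as requiring care (oddness of $q$ so that $s'=s$ and $s'=-s$ cannot coincide and $-2\ell\neq 0$ when $\ell\neq 0$, and $y\neq\mathbf{0}$ so that the diagonal and anti-diagonal parts are disjoint) are exactly the right ones.
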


\begin{proof}
The value $\mathcal{I}(\ell)$ can be written as
$$ \mathcal{I}(\ell)= \sum_{\substack{y,y'\in F\\ s,s'\in {\mathbb F}_q^*: s'y'=sy}} 
\chi\left(\ell(s'-s)\right).$$ 

It is clear that the sum over pairs $(s,s')$ with $s=s'$ is  $(q-1)|F|,$ and  the sum over pairs $(s,s')$ with $s\ne s'$ is 
$$  \sum_{\substack{y,y'\in F\\ s,s'\in {\mathbb F}_q^*: s'y'=sy, s\ne s'}} 
\chi\left(\ell(s'-s)\right) =  \sum_{\substack{y,y'\in F\\ a\ne 0, b\ne 0, 1: y'=by}} 
\chi\left(\ell a(1-b)\right),$$
where we use a change of variables by letting $a=s', b=s/s'.$

If $\ell\ne 0$, then this value is less than or equal to  zero, because the sum over $a\ne 0$ is $-1$ by the orthogonality of $\chi.$ If $\ell=0,$ then 
the value above is given by
$$  \sum_{\substack{y,y'\in F\\ a\ne 0, b\ne 0, 1: y'=by}} 1=(q-1) \sum_{\substack{y,y'\in F\\  b\ne 0, 1: y'=by}} 1.$$
Observe that if $b\ne 1$ and $y,y'\in D_j$ with $j\ne 0,$ then $y'=by$  only if $b=-1.$  
Thus, the value above is at most $(q-1)|F|.$
In summary, we  have proved that  for  any $\ell \in \mathbb F_q,$ 
$$ \mathcal{I}(\ell)\le 2(q-1)|F| \le 2q|F|,$$
as desired.

\end{proof}

\begin{lemma}\label{Lemma-B(l)} Let $i\in \mathbb F_q^*$ and  $F$ be a subset of $D_j$ with $j\in \mathbb F_q^*.$ Then, for all $\ell \in \mathbb F_q,$ we have
 $$\mathcal{A}(\ell):=\sum_{\substack{y,y'\in F\\ r,s,s'\in {\mathbb F}_q^*:\|s'y'-sy\|_*=0 }} 
 \chi\left(-ir\right) \chi\left(\ell(s'-s)\right)\le q|F|^2.$$
\end{lemma}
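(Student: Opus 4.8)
The plan is to first eliminate the variable $r$, which appears only in the phase $\chi(-ir)$ and not in the constraint $\|s'y'-sy\|_*=0$. Since $i\in\mathbb F_q^*$, the orthogonality of $\chi$ gives $\sum_{r\in\mathbb F_q^*}\chi(-ir)=-1$, so the $r$-sum factors out and
\[
\mathcal A(\ell)=-\mathcal B(\ell),\qquad \mathcal B(\ell):=\sum_{\substack{y,y'\in F\\ s,s'\in\mathbb F_q^*:\ \|s'y'-sy\|_*=0}}\chi\big(\ell(s'-s)\big).
\]
Thus it suffices to prove $\mathcal B(\ell)\ge -q|F|^2$ for every $\ell\in\mathbb F_q$.

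Next I would unfold the determinant constraint via Lemma \ref{Odotlem}. Because $y,y'\in F\subseteq D_j$, we have $\|s'y'-sy\|_*=j(s^2+s'^2)-ss'(y\odot y')$, so after writing $s=us'$ with $u\in\mathbb F_q^*$ the condition $\|s'y'-sy\|_*=0$ becomes the quadratic $ju^2-(y\odot y')u+j=0$ in the single ratio $u$, independent of $s'$. Since $j\ne 0$ this is a genuine quadratic with at most two roots, all nonzero; denote by $n(y\odot y')\in\{0,1,2\}$ the number of distinct roots. For each admissible root $u$ the pair $(s,s')=(us',s')$ runs over $q-1$ values as $s'$ ranges over $\mathbb F_q^*$, and $\chi(\ell(s'-s))=\chi(\ell s'(1-u))$.

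Now I would carry out the sum over $s'$. When $\ell\ne 0$ the inner sum $\sum_{s'\in\mathbb F_q^*}\chi(\ell s'(1-u))$ equals $q-1$ if $u=1$ and $-1$ otherwise, and $u=1$ is a root exactly when $y\odot y'=2j$. Collecting contributions yields the clean identity
\[
\mathcal B(\ell)=q\,\#\{(y,y')\in F\times F:\ y\odot y'=2j\}-\sum_{y,y'\in F}n(y\odot y')\qquad(\ell\ne 0),
\]
whence $\mathcal A(\ell)=-\mathcal B(\ell)\le \sum_{y,y'}n(y\odot y')\le 2|F|^2\le q|F|^2$, using $q\ge 3$. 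For $\ell=0$ every inner sum equals $q-1$, so $\mathcal B(0)=(q-1)\sum_{y,y'}n(y\odot y')\ge 0$ and $\mathcal A(0)\le 0$, which is also covered.

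The main obstacle is that the crude count of solutions $(s,s')$ gives up to $2(q-1)$ pairs per $(y,y')$, hence the trivial estimate $|\mathcal A(\ell)|\le 2q|F|^2$, overshooting the target by a factor of two. The gain comes from refusing to pass to absolute values and instead exploiting cancellation in $\chi(\ell(s'-s))$: orthogonality over $s'$ collapses every ratio $u\ne 1$ to a contribution of $-1$, and the one exceptional ratio $u=1$ (where the character is trivial and the count is large) enters with a sign that, after the $-1$ produced by the $r$-sum, turns out to be favorable. Tracking this sign correctly is the only delicate point.
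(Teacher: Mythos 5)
Your proof is correct and follows essentially the same route as the paper's: eliminate $r$ by orthogonality (picking up the factor $-1$), pass to the ratio $u=s/s'$ so that the condition $\|s'y'-sy\|_*=0$ becomes the quadratic $ju^2-(y\odot y')u+j=0$, and sum over the scale variable by orthogonality, with the $u=1$ (i.e.\ $s=s'$) contribution entering with a favorable sign. The only cosmetic difference is that the paper discards the diagonal $s=s'$ terms early, noting their contribution to $\mathcal{A}(\ell)$ is non-positive, and works with inequalities, whereas you keep them and obtain an exact identity for the $(s,s')$-sum; both arguments hinge on exactly the same sign structure.
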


\begin{proof}
Since $i\ne 0$, the sum over $r\in \mathbb F_q^*$ of $\mathcal{A}(\ell)$ is -1. Thus, we have
$$ \mathcal{A}(\ell)=\sum_{\substack{y,y'\in F\\ s,s'\in {\mathbb F}_q^*:\|s'y'-sy\|_*=0 }} 
  -\chi\left(\ell(s'-s)\right). $$ 
Notice that $\mathcal{A}(\ell)$ is a real number since $\mathcal{A}(\ell)= \overline{\mathcal{A}(\ell)}.$ 
It is clear that the contribution of the case $s=s'$ to  $\mathcal{A}(\ell)$ is negative. Hence, 
$$ \mathcal{A}(\ell) \le \sum_{\substack{y,y'\in F\\ s,s'\in {\mathbb F}_q^*:\|s'y'-sy\|_*=0, s\ne s'}} 
  -\chi\left(\ell(s'-s)\right). $$ 
 Since  $F\subseteq D_j,$  the condition $\|s'y'-sy\|_*=0$ is equivalent to  $j s'^2 + js^2-s's(y'\odot y)=0.$ 
 Using a change of variables by letting $a=s', b=s/s'$, we have
 $$ \mathcal{A}(\ell) \le \sum_{\substack{y,y'\in F\\ a\ne 0, b\ne 0, 1: j+jb^2-b (y'\odot y)=0}} 
  -\chi\left(\ell a(1-b)\right).$$ 
  If $\ell=0$, then this value is obviously a non-positive real number. If $\ell\ne 0$, then  the sum over $a\ne 0$  is $-1.$ Hence, 
$$\mathcal{A}(\ell) \le \sum_{\substack{y,y'\in F\\ b\ne 0, 1: j+jb^2-b (y'\odot y)=0}}  1 \le q|F|^2,$$
as required.
 \end{proof}

\begin{lemma}\label{Lemma-B(21)} Let $F$ be a subset of $M_2(\mathbb F_q).$ Then, for every  $i\in \mathbb F_q^*,$ we have
$$\mathcal{B}(i):=\sum_{\substack{y,y'\in F\\r, s\in {\mathbb F}_q^*: \|y'-y\|_*\ne 0}} 
 \chi\left(-ir-\frac{s^2\|y'-y\|_*}{r}\right)\le 2q|F|^2.$$
 
\end{lemma}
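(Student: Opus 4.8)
The plan is to evaluate the inner double sum over $r,s\in\mathbb{F}_q^*$ exactly for each fixed pair $(y,y')$ with $d:=\|y'-y\|_*\ne 0$, obtain a closed form, and then sum the trivial bound over the pairs. First I would carry out the sum over $s$ with $r$ held fixed: applying Lemma \ref{Comsquare} with $a=-d/r$ and $b=0$ (note $a\ne 0$ since $d,r\ne 0$, and $\chi(0)=1$) gives
$$\sum_{s\in\mathbb{F}_q^*}\chi\!\left(-\frac{s^2 d}{r}\right)=\eta\!\left(\frac{-d}{r}\right)G_1-1.$$
Substituting this back and using $\eta(-d/r)=\eta(-d)\eta(r)$ (valid because $\eta(r^{-1})=\eta(r)$), the inner sum splits as
$$\sum_{r\in\mathbb{F}_q^*}\chi(-ir)\left[\eta\!\left(\tfrac{-d}{r}\right)G_1-1\right]=G_1\,\eta(-d)\sum_{r\in\mathbb{F}_q^*}\eta(r)\chi(-ir)-\sum_{r\in\mathbb{F}_q^*}\chi(-ir).$$

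Next I would evaluate the two sums over $r$ using the material of Section \ref{sec2N}. The orthogonality of $\chi$ gives $\sum_{r\in\mathbb{F}_q^*}\chi(-ir)=-1$ since $i\ne 0$, while the Gauss-sum identity \eqref{Gauss} gives $\sum_{r\in\mathbb{F}_q^*}\eta(r)\chi(-ir)=\eta(-i)G_1$. Combining these, together with $\eta(-d)\eta(-i)=\eta(di)$ and the relation $G_1^2=\eta(-1)q$ from \eqref{Gauss-1}, collapses the inner sum to
$$G_1^2\,\eta(-d)\eta(-i)+1=\eta(-1)q\,\eta(di)+1=q\,\eta\!\left(-i\|y'-y\|_*\right)+1,$$
where the last equality uses $\eta(-1)\eta(di)=\eta(-di)$ and $d=\|y'-y\|_*$.

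Finally I would sum over the pairs. Since $\eta$ takes values in $\{-1,1\}$, each summand is at most $q+1$, and there are at most $|F|^2$ pairs $(y,y')$ with $\|y'-y\|_*\ne 0$, whence
$$\mathcal{B}(i)\le (q+1)|F|^2\le 2q|F|^2,$$
as claimed. I do not expect a genuine obstacle: the computation is an exact evaluation, and the only place demanding care is the bookkeeping of the quadratic character and the Gauss sums (the reduction $\eta(-d/r)=\eta(-d)\eta(r)$ and the sign tracking $\eta(-1)\eta(di)=\eta(-di)$). Once the inner sum is reduced to $q\,\eta(\cdots)+1$, the stated bound is immediate from $|\eta|=1$ and the trivial count of pairs.
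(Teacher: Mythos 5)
Your proof is correct and follows essentially the same route as the paper: both sum over $s$ via Lemma \ref{Comsquare} with $b=0$, then handle the resulting Gauss sum over $r$ via \eqref{Gauss} and the orthogonality of $\chi$, and finish with the trivial count of pairs. The only (harmless) difference is that you evaluate the inner sum exactly as $q\,\eta(-i\|y'-y\|_*)+1$, whereas the paper just bounds the Gauss-sum term by $q$ in absolute value; both yield $\mathcal{B}(i)\le (q+1)|F|^2\le 2q|F|^2$.
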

\begin{proof}We apply Lemma \ref{Comsquare} with $b=0$ to get the following: 
 $$ \mathcal{B}(i)= G_1\sum_{\substack{y,y'\in F\\r\in {\mathbb F}_q^*: \|y'-y\|_*\ne 0}} 
 \chi(-ir)~ \eta\left(-\frac{\|y'-y\|_*}{r}\right) 
 -\sum_{\substack{y,y'\in F\\r\in {\mathbb F}_q^*: \|y'-y\|_*\ne 0}}  \chi(-ir)$$
 $$=G_1\sum_{\substack{y,y'\in F\\r\in {\mathbb F}_q^*: \|y'-y\|_*\ne 0}} 
 \chi(-ir)~ \eta\left(-\frac{\|y'-y\|_*}{r}\right) + \sum_{\substack{y,y'\in F\\: \|y'-y\|_*\ne 0}} 1.$$
 Since the sum over $r\in \mathbb F_q^*$ of the first term above is a Gauss sum,  it is easy to see that
 \begin{equation*} \mathcal{B}(i) \le  q|F|^2 + |F|^2 \le 2q|F|^2.\end{equation*}
\end{proof}

\begin{lemma} \label{Lemma-B(22)}
Let $i\in \mathbb F_q^*$ and $F$ be a subset of $D_j$ with $j\in \mathbb F_q^*.$ Then for all $\ell\in \mathbb F_q$, we have
$$\mathcal{C}(\ell):=\sum_{\substack{y,y'\in F\\r, s,s'\in {\mathbb F}_q^*: \|s'y'-sy\|_*\ne 0, s\ne s'}} 
 \chi\left(-ir-\frac{\|s'y'-sy\|_*}{r}\right) \chi\left(\ell(s'-s)\right).$$
 $$\le 2q|F|^2+  G_1 \sum_{\substack{y,y'\in F\\r\ne 0, b\ne 0}} \eta(-r)\chi(r(\ell^2-4ij)) ~\chi\left( r(\ell^2-4ij) b^2 +2r(2i(y'\odot y)-\ell^2)b\right).$$
\end{lemma}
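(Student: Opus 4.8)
The plan is to evaluate $\mathcal{C}(\ell)$ by expanding $\|s'y'-sy\|_*$ through the Odot-product, carrying out the inner quadratic sum with Lemma~\ref{Comsquare}, and then normalising the outcome by two changes of variables in $r$ so that it matches the right-hand side. Concretely, since $y,y'\in F\subseteq D_j$, Lemma~\ref{Odotlem} gives $\|s'y'-sy\|_*=j(s')^2+js^2-ss'(y'\odot y)$. Writing $\theta:=y'\odot y$ and passing to the variables $a=s'$, $b=s/s'$ (exactly as in Lemmas~\ref{Lemma-A(l)} and~\ref{Lemma-B(l)}), the conditions $s,s'\in\mathbb F_q^*$ and $s\ne s'$ become $a\ne 0$ and $b\ne 0,1$; moreover $\|s'y'-sy\|_*=a^2u$ with $u:=jb^2-\theta b+j$, so that $\|s'y'-sy\|_*\ne 0$ reads $u\ne 0$, and $\ell(s'-s)=\ell a(1-b)$. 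Hence
$$\mathcal{C}(\ell)=\sum_{\substack{y,y'\in F,\ r\ne 0\\ b\ne 0,1,\ u\ne 0}}\chi(-ir)\sum_{a\ne 0}\chi\!\left(-\frac{u}{r}a^2+\ell(1-b)a\right).$$

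Next I would apply Lemma~\ref{Comsquare} to the sum over $a$, whose quadratic coefficient $-u/r$ is nonzero. This splits $\mathcal{C}(\ell)$ into a principal contribution carrying the Gauss factor $G_1$ and an error term coming from the ``$-1$'' in Lemma~\ref{Comsquare}, namely $-\sum\chi(-ir)$ over the current index set. Summing this error over $r\ne 0$ and using $\sum_{r\ne 0}\chi(-ir)=-1$ (valid since $i\ne 0$) leaves $+\#\{(y,y',b):b\ne0,1,\ u\ne0\}\le q|F|^2$, which is harmless.

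For the principal term I would simplify the outputs of Lemma~\ref{Comsquare}, namely $\eta(-u/r)=\eta(-ur)$ and the quadratic-completion factor $\ell^2(1-b)^2r/(4u)$. The substitution $r\mapsto ru$, a bijection of $\mathbb F_q^*$ because $u\ne 0$, turns $\eta(-ur)$ into $\eta(-r)$ and clears the denominator $u$; a second substitution $r\mapsto 4r$, harmless for $\eta$ since $4$ is a square, collects the exponent into $r\big[(\ell^2-4ij)(1+b^2)+2(2i\theta-\ell^2)b\big]$, which is exactly the summand on the right-hand side, now running over $b\ne 0,1$ with $u\ne 0$. To reach the stated range $b\ne 0$, I would extend the summation to all $b\ne 0$, thereby adding the terms with $b=1$ and with $u=0$ (the latter being the at most two roots of $jb^2-\theta b+j=0$). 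For each such term the inner sum over $r$ is a Gauss sum of modulus $q^{1/2}$ (or vanishes), so after the factor $G_1$ these corrections are $O(q|F|^2)$; together with the error term above they are absorbed into the constant $2q|F|^2$.

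The main obstacle will be precisely this last step of bookkeeping: the sum manufactured by Lemma~\ref{Comsquare} lives on the restricted index set $\{b\ne0,1,\ u\ne0\}$, whereas the target runs over all $b\ne0$, and one must verify that enlarging the range perturbs the expression by at most a controlled multiple of $q|F|^2$. This forces one to identify the inner $r$-sums of the newly included $b=1$ and $u=0$ terms as Gauss sums and to count the relevant values of $b$ carefully, so that the accumulated corrections, together with the ``$-1$'' error, stay within $2q|F|^2$.
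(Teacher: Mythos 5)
Your outline coincides with the paper's argument in all of its main steps: the same change of variables $a=s'$, $b=s/s'$, the same application of Lemma~\ref{Comsquare} to the sum over $a$ (with quadratic coefficient $-u/r$, $u:=j+jb^2-b(y'\odot y)$), and your two substitutions $r\mapsto ru$, $r\mapsto 4r$ compose to exactly the paper's single replacement of $r/(4\|y'-by\|_*)$ by $r$, yielding the correct exponent $r\bigl[(\ell^2-4ij)(1+b^2)+2(2i(y'\odot y)-\ell^2)b\bigr]$. The gap sits precisely in the step you flag as the main obstacle, and your proposed mechanism for it does not suffice: bounding the newly included $b=1$ and $u=0$ terms by ``Gauss sums of modulus $q^{1/2}$'' can only give absolute-value estimates, and these overshoot the stated constant. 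Indeed, the error from the ``$-1$'' in Lemma~\ref{Comsquare} costs up to $q|F|^2$ (as you correctly compute), the $b=1$ correction costs up to another $q|F|^2$ in modulus, and the $u=0$ correction involves up to two roots $b$ of $jb^2-(y'\odot y)b+j=0$ per pair $(y,y')$, hence up to $2q|F|^2$ more. A purely modulus-based count therefore proves the lemma only with $2q|F|^2$ replaced by $4q|F|^2$.

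The missing idea, which the paper uses, is that the claim is one-sided, so you should evaluate the excluded terms exactly and exploit their signs. Writing the restricted sum as the full sum over $b\ne 0$ minus the $b=1$ and $u=0$ corrections: for the $u=0$ correction the inner sum is $\sum_{r\ne 0}\eta(-r)\chi(r\ell^2(1-b)^2)$, which vanishes if $\ell=0$ (orthogonality of $\eta$) and equals $\eta(-1)G_1$ otherwise by \eqref{Gauss}; after the outer factor $G_1$, the whole correction equals $\eta(-1)G_1^2=q$ (by \eqref{Gauss-1}) times a \emph{nonnegative} count, and since it is subtracted it can simply be discarded at no cost. Similarly, the $b=1$ correction equals $G_1^2\sum\eta(4i\|y'-y\|_*)$ over pairs with $\|y'-y\|_*\ne 0$ (the $r$-sum vanishing when $\|y'-y\|_*=0$), a real number of absolute value at most $q|F|^2$. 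Only with this sign analysis does the total overhead come to $q|F|^2+q|F|^2=2q|F|^2$ as stated; you should replace your modulus estimates at this final step with these exact evaluations.
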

\begin{proof}The value $\mathcal{C}(\ell)$ is rewritten as follows: 
 $$\mathcal{C}(\ell)= \sum_{\substack{y,y'\in F\\r, s,s'\in {\mathbb F}_q^*: \|y'-(s/s')y\|_*\ne 0, s/s'\ne 1}} 
 \chi\left(-ir-\frac{s'^2\|y'-(s/s')y\|_*}{r}\right) \chi\left(\ell s'(1-s/s')\right).$$
 By a change of variables with $a=s', b=s/s'$, we have 
 $$ \mathcal{C}(\ell)= \sum_{\substack{y,y'\in F\\r, a\ne 0, b\ne 0,1: \|y'-by\|_*\ne 0}} 
 \chi\left(-ir-\frac{a^2\|y'-by\|_*}{r}\right) \chi\left(\ell a(1-b)\right).$$
 Computing the sum over $a\in \mathbb F_q^*$ by Lemma \ref{Comsquare}, we have
 $$ \mathcal{C}(\ell)=G_1 \sum_{\substack{y,y'\in F\\r\ne 0, b\ne 0,1: \|y'-by\|_*\ne 0}} \eta\left(\frac{\|y'-by\|_*}{-r}\right) \chi(-ir) \chi\left(\frac{r\ell^2(b-1)^2}{4\|y'-by\|_*} \right)$$
 $$+\sum_{\substack{y,y'\in F\\r\ne 0, b\ne 0,1: \|y'-by\|_*\ne 0}} -\chi(-ir).$$
 In the first term we use a change of variables by replacing $r/(4\|y'-by\|_*)$ by $r$ and in the second term we compute the sum over $r\ne 0.$ Then we see that 
 $$ \mathcal{C}(\ell)=G_1 \sum_{\substack{y,y'\in F\\r\ne 0, b\ne 0,1: \|y'-by\|_*\ne 0}} \eta\left(\frac{1}{-4r}\right) \chi(-4ir \|y'-by\|_* ) \chi\left(r\ell^2(b-1)^2 \right)$$
 $$+  \sum_{\substack{y,y'\in F\\ b\ne 0,1: \|y'-by\|_*\ne 0}} 1.$$
 Since the second term above is less than $q|F|^2,$ it follows that
 $$ \mathcal{C}(\ell)\le q|F|^2+G_1 \sum_{\substack{y,y'\in F\\r\ne 0, b\ne 0,1: \|y'-by\|_*\ne 0}} \eta\left(\frac{1}{-4r}\right) \chi(-4i r \|y'-by\|_*) \chi\left(r\ell^2(b-1)^2 \right)$$
 $$ =q|F|^2+ G_1 \sum_{\substack{y,y'\in F\\r\ne 0, b\ne 0: \|y'-by\|_*\ne 0}} \eta\left(\frac{1}{-4r}\right) \chi(-4ir \|y'-by\|_* ) \chi\left(r\ell^2(b-1)^2 \right)$$
 $$-G_1 \sum_{\substack{y,y'\in F\\r\ne 0: \|y'-y\|_*\ne 0}} \eta\left(\frac{1}{-4r}\right) \chi(-4ir \|y'-y\|_* ). $$
Using the formula \eqref{Gauss} and the fact that $G_1^2$ is a real number with $G_1^2=\pm q$, we see that the third term above is a real number which is less than or equal to $q|F|^2.$ Hence, 
$$ \mathcal{C}(\ell)\le 2q|F|^2+G_1 \sum_{\substack{y,y'\in F\\r\ne 0, b\ne 0: \|y'-by\|_*\ne 0}} \eta\left(\frac{1}{-4r}\right) \chi(-4ir \|y'-by\|_* ) \chi\left(r\ell^2(b-1)^2 \right)$$
$$ =  2q|F|^2+ G_1 \sum_{\substack{y,y'\in F\\r\ne 0, b\ne 0}} \eta\left(\frac{1}{-4r}\right) \chi(-4ir \|y'-by\|_* ) \chi\left(r\ell^2(b-1)^2 \right)$$
$$- G_1 \sum_{\substack{y,y'\in F\\r\ne 0, b\ne 0: \|y'-by\|_*= 0}} \eta\left(\frac{1}{-4r}\right)  \chi\left(r\ell^2(b-1)^2 \right).$$
By the orthogonality of $\eta$, we see that if $\ell =0$ or $b=1$, then the last term above is zero. On the other hand, if $\ell\ne 0$ and $b\ne 1$, then it follows from the formula \eqref{Gauss} that the last term above is  
$$-\eta(-1)G_1^2 \sum_{\substack{y,y'\in F\\ b\ne 0: \|y'-by\|_*= 0}} 1.$$ This value is a negative real number since $\eta(-1)G_1^2=q$ (see \eqref{Gauss-1}). Hence,
$$  \mathcal{C}(\ell)\le   2q|F|^2+ G_1 \sum_{\substack{y,y'\in F\\r\ne 0, b\ne 0}} \eta\left(\frac{1}{-4r}\right) \chi(-4ir \|y'-by\|_*) \chi\left(r\ell^2(b-1)^2 \right).$$
Since $\|y'-by\|_*=j+jb^2-b(y'\odot y)$ for $b\in \mathbb F_q, y,y'\in F\subseteq D_j$, it follows that
\begin{align*}  \mathcal{C}(\ell)\le&~ 2q|F|^2\\ 
&+ G_1 \sum_{\substack{y,y'\in F\\r\ne 0, b\ne 0}} \eta(-r)\chi(r(\ell^2-4ij)) ~\chi\left( r(\ell^2-4ij) b^2 +2r(2i(y'\odot y)-\ell^2)b\right),\end{align*}
where we also used the fact that $\eta(1/(-4r))=\eta(-r).$ Thus, the proof is complete.
\end{proof}

Based on the previous lemmas, we can deduce the following result.
\begin{proposition}\label{MainThm}
Let $i,j$ be elements in $\mathbb F_q^*,$ and $E$ and $F$ be subsets of $D_i$ and $D_j,$ respectively. For each $t\in \mathbb F_q,$ we have
 $$ \left|N_t(E,F)-\frac{|E||F|}{q}\right|\le \sqrt{18q^2|E||F| + 11|E||F|^2+ 4\sqrt{7}q|E||F|^{\frac{3}{2}}}.$$
\end{proposition}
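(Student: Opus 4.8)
The statement of Proposition \ref{MainThm} is word-for-word the conclusion of Lemma \ref{twoC}, so the plan is simply to verify its two hypotheses \eqref{Hp} and \eqref{Hp1}. Observe that \eqref{Hp1} is exactly \eqref{Hp} with $E$ replaced by $F$ (and $i$ by $j$), since $F\subseteq D_j$; hence it suffices to establish the single estimate
\[
|R_\ell(E,F)|^2\le 2q^2|E||F|+7|E||F|^2+2q|E|M(F,F)
\]
for all $\ell\in\mathbb F_q$, whenever $E\subseteq D_i$ and $F\subseteq D_j$. First I would use the orthogonality of $\chi$ to write $R_\ell(E,F)=\sum_{x\in E}g(x)$, where $g(x)=\tfrac1q\sum_{s\ne0}\chi(-s\ell)\sum_{y\in F}\chi(s(x\odot y))$ is the (real-valued) deviation of the number of $y\in F$ with $x\odot y=\ell$ from its average $|F|/q$. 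Since $g$ is real and $E\subseteq D_i$, the Cauchy--Schwarz inequality gives $|R_\ell(E,F)|^2\le |E|\sum_{x\in E}g(x)^2\le|E|\sum_{x\in D_i}g(x)^2$, so the whole problem reduces to bounding $\sum_{x\in D_i}g(x)^2$ by $2q^2|F|+7|F|^2+2qM(F,F)$.

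Next I would expand $\sum_{x\in D_i}g(x)^2$ and collapse the inner sum over $x$ through Lemma \ref{Focirl}, which rewrites $\sum_{x\in D_i}\chi\bigl(x\odot(sy-s'y')\bigr)=\widetilde{D_i}(s'y'-sy)$ as its $\delta_0$-part plus its Gauss-sum part. This splits $\sum_{x\in D_i}g(x)^2$ into precisely the four sums isolated above. The $\delta_0$-contribution is $q\,\mathcal{I}(\ell)$, bounded by $2q^2|F|$ via Lemma \ref{Lemma-A(l)}. The Gauss-sum part, separated according to whether $\|s'y'-sy\|_*=0$ or not and, in the latter case, whether $s=s'$ or $s\ne s'$, produces $q^{-1}\mathcal{A}(\ell)$, $q^{-1}\mathcal{B}(i)$, and $q^{-1}\mathcal{C}(\ell)$, bounded respectively by $|F|^2$, $2|F|^2$, and $2|F|^2+q^{-1}\mathcal{D}(\ell)$ through Lemmas \ref{Lemma-B(l)}, \ref{Lemma-B(21)}, and \ref{Lemma-B(22)}; here $\mathcal{D}(\ell)$ denotes the residual double sum on the right-hand side of Lemma \ref{Lemma-B(22)}. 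Adding these contributions reduces the target to the single remaining estimate $\mathcal{D}(\ell)\le 2q|F|^2+2q^2M(F,F)$.

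The main obstacle is precisely this last bound, which is where $M(F,F)$ finally enters. I would evaluate the inner sum over $b$ by completing the square (Lemma \ref{Comsquare}); when $\ell^2-4ij\ne0$ this yields a Gauss factor, and after invoking $\eta(-1)G_1^2=q$ (see \eqref{Gauss-1}) together with the evaluation \eqref{Gauss}, the sum over $r$ collapses to $q-1$ or $-1$ according to whether a quantity $c(y,y')$ vanishes. The condition $c(y,y')=0$ amounts to $2i(y'\odot y)-\ell^2=\pm(\ell^2-4ij)$, i.e.\ to $y'\odot y$ taking one of two fixed values, so the number of such pairs is $P_0\le 2M(F,F)$; this gives $\mathcal{D}(\ell)=q\,\eta(\ell^2-4ij)\bigl(qP_0-2|F|^2\bigr)$ with $0\le qP_0\le 2qM(F,F)$, whence $|\mathcal{D}(\ell)|\le 2q^2M(F,F)+2q|F|^2$. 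The degenerate case $\ell^2-4ij=0$ must be checked separately: there no Gauss factor arises, and the ensuing sum $\sum_{r\ne0}\eta(-r)$ vanishes by orthogonality of $\eta$, so $\mathcal{D}(\ell)=0$. Combining everything yields $\sum_{x\in D_i}g(x)^2\le 2q^2|F|+5|F|^2+q^{-1}\mathcal{D}(\ell)\le 2q^2|F|+7|F|^2+2qM(F,F)$, and multiplying by $|E|$ establishes \eqref{Hp}; Lemma \ref{twoC} then completes the proof.
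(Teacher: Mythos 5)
Your proposal is correct and follows essentially the same route as the paper's own proof: reduce to hypothesis \eqref{Hp} of Lemma \ref{twoC} (with \eqref{Hp1} as the case $E=F$), apply Cauchy--Schwarz and enlarge $E$ to $D_i$, collapse via Lemma \ref{Focirl}, and control the resulting pieces by Lemmas \ref{Lemma-A(l)}--\ref{Lemma-B(22)}, with the residual Gauss-sum term handled through Lemma \ref{Comsquare}, \eqref{Gauss-1}, \eqref{Gauss}, and the observation that the degenerate pairs satisfy $y'\odot y\in\{2j,(\ell^2-2ij)/i\}$, contributing at most $2qM(F,F)$ per the definition of $M$. Your only deviation is cosmetic: you evaluate the residual term exactly as $q\,\eta(\ell^2-4ij)\bigl(qP_0-2|F|^2\bigr)$, whereas the paper bounds the corresponding term $F(\ell)$ by a case analysis, but both yield the identical constants $2q^2|E||F|+7|E||F|^2+2q|E|M(F,F)$.
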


\begin{proof} To prove the proposition, we invoke Lemma \ref{twoC}, i.e., show that the conditions  \eqref{Hp} and \eqref{Hp1} in Lemma \ref{twoC} are satisfied.
 Note that if we prove the condition \eqref{Hp}, then we easily see that  the condition \eqref{Hp1} would be automatic by considering the case $E=F$. %Thus let us prove the condition \eqref{Hp} 
Thus it is enough to prove the  condition \eqref{Hp};  for each $\ell\in \mathbb F_q,$ 
\begin{equation}\label{maingoal} |R_\ell(E,F)|^2 \le 2q^2|E||F|+ 7|E||F|^2+ 2q|E| M(F,F).\end{equation}
To prove the above inequality, we first notice by the orthogonality  of $\chi$ that 
 $$W_\ell(E,F)=q^{-1} \sum_{x\in E, y\in F} \sum_{s\in \mathbb F_q} \chi(s(x\odot y-\ell))
 =\frac{|E||F|}{q} + q^{-1} \sum_{x\in E, y\in F}\sum_{s\ne 0}   \chi(sx\odot y)\chi( -s\ell).$$ 
From this equality, we see that  
\begin{equation*}\label{Rl}R_\ell(E,F)= q^{-1} \sum_{x\in E, y\in F}\sum_{s\ne 0}   \chi(sx\odot y)\chi( -s\ell).\end{equation*}
By the Cauchy-Schwarz inequality 
w.r.t $x\in E, $ we have
 \begin{align*}
|R_\ell(E,F)|^2 \leq  q^{-2} |E| \sum_{x\in E} \left| \sum_{y\in F, s\in {\mathbb F}_q^*}\chi(sx\odot y)\chi( -s\ell)\right|^2.
\end{align*}
Since $E\subseteq D_i$,  we have
\begin{align*}
|R_\ell(E,F)|^2 &\leq q^{-2} |E|\sum_{x\in D_i} \sum_{\substack{y,y'\in F\\ s,s'\in {\mathbb F}_q^*}} \chi\left(-x\odot (s'y'-sy)\right)
\chi\left(\ell(s'-s)\right)\\
&=q^{-2} |E| \sum_{\substack{y,y'\in F\\ s,s'\in {\mathbb F}_q^*}} \widetilde{D_i}(s'y'-sy)
\chi\left(\ell(s'-s)\right).
\end{align*}
Using Lemma \ref{Focirl} and Lemma \ref{Lemma-A(l)}, 
$$|R_\ell(E,F)|^2\le 2q^2|E||F|+ q^{-1}|E|\sum_{\substack{y,y'\in F\\ s,s'\in {\mathbb F}_q^*}} 
\sum_{r\in \mathbb F_q^*} \chi\left(-ir-\frac{\|s'y'-sy\|_*}{r}\right) \chi\left(\ell(s'-s)\right).$$
Let $A(\ell)$ denote the second term of the RHS of the above inequality. 
Then, to prove the inequality \eqref{maingoal}, it is enough to show that
\begin{equation*}\label{maingoal2} A(\ell) \le   7|E||F|^2+ 2q|E| M(F,F).\end{equation*}
To prove this inequality, we split up the sum $A(\ell)$ into two summands as follows:
$$ A(\ell)= q^{-1}|E|\sum_{\substack{y,y'\in F\\ r,s,s'\in {\mathbb F}_q^*:\|s'y'-sy\|_*=0 }} 
 \chi\left(-ir\right) \chi\left(\ell(s'-s)\right) $$ 
 $$+ q^{-1}|E|\sum_{\substack{y,y'\in F\\r, s,s'\in {\mathbb F}_q^*: \|s'y'-sy\|_*\ne 0}} 
 \chi\left(-ir-\frac{\|s'y'-sy\|_*}{r}\right) \chi\left(\ell(s'-s)\right).$$
From Lemma \ref{Lemma-B(l)}, it is clear that the first term of the  RHS of the above equality is $\le |E||F|^2.$ Hence, letting $B(\ell)$ denote the second term of the RHS of the above equality, we only need to show that 
$$ B(\ell) \le 6|E||F|^2+ 2q|E| M(F,F).$$
To estimate $B(\ell),$ we consider two cases that $s= s'$ and $s\ne s'.$ It follows that
  $$ B(\ell)= q^{-1}|E|\sum_{\substack{y,y'\in F\\r, s\in {\mathbb F}_q^*: \|y'-y\|_*\ne 0}} 
 \chi\left(-ir-\frac{s^2\|y'-y\|_*}{r}\right) $$
 $$+q^{-1}|E|\sum_{\substack{y,y'\in F\\r, s,s'\in {\mathbb F}_q^*: \|s'y'-sy\|_*\ne 0, s\ne s'}} 
 \chi\left(-ir-\frac{\|s'y'-sy\|_*}{r}\right) \chi\left(\ell(s'-s)\right).$$
It is obvious from Lemma \ref{Lemma-B(21)} that the first term of the  RHS of the above equality is 
$\le 2|E||F|^2.$ Therefore, letting $C(\ell)$ be the second term of the RHS of the above equality, our problem is reduced to showing that 
$$ C(\ell) \le 4|E||F|^2+ 2q|E| M(F,F).$$
Using Lemma \ref{Lemma-B(22)}, it follows that
$$ C(\ell)\le 2|E||F|^2 + G_1 q^{-1}|E| \sum_{\substack{y,y'\in F\\r\ne 0, b\ne 0}} \eta(-r)\chi(r(\ell^2-4ij)) ~\chi\left( r(\ell^2-4ij) b^2 +2r(2i(y'\odot y)-\ell^2)b\right).$$
Letting $D(\ell)$ denote the second term of the RHS of the above inequality, it is enough to prove that
\begin{equation}\label{maineq1} D(\ell) \le  2|E||F|^2+ 2q|E| M(F,F).\end{equation}
When $\ell^2-4ij=0,$ it is not hard to see that $D(\ell)=0.$ Thus, assuming that $\ell^2-4ij\ne 0$, we will prove the inequality \eqref{maineq1}. Computing the sum over $b\ne 0$ of the term $D(\ell)$ by using Lemma \ref{Comsquare}, we have
\begin{align} \label{D(l)eq} D(\ell)=&q^{-1}|E|G_1^2 \sum_{\substack{y,y'\in F\\r\ne 0}} \eta(4ij-\ell^2) \chi\left(\frac{\left[ (\ell^2-4ij)^2-(2i(y'\odot y)-\ell^2)^2 \right]r}{\ell^2-4ij}\right)\\
\nonumber &-q^{-1}|E|G_1 \sum_{\substack{y,y'\in F\\r\ne 0}} \eta(-r)\chi(r(\ell^2-4ij)).  \end{align}
The last value above is the same as
$$-q^{-1}|E|G_1^2 \sum_{y,y'\in F} \eta(4ij-\ell^2)$$
which is clearly $\le |E||F|^2.$ Hence, letting $F(\ell)$ be the first term of the RHS of the above equality \eqref{D(l)eq}, our final task is to show that
\begin{equation}\label{Great}F(\ell)\le |E||F|^2+ 2q|E| M(F,F).\end{equation}

Notice that the value in the bracket $[\quad]$ in \eqref{D(l)eq} is zero if and only if $y'\odot y=2j$ or $(\ell^2-2ij)/i.$ 
Hence, in the case of  $y'\odot y\ne 2j, (\ell^2-2ij)/i,$  the contribution to $F(\ell)$ is at most $|E||F|^2$, because the sum over $r\ne 0$ is $-1, G_1^2= \pm q,$ and $\eta$ takes $\pm 1.$
On the other hand, in the case of $ y'\odot y=2j$ or $(\ell^2-2ij)/i$, the contribution to $F(\ell)$ is clearly dominated by
$$ 2q|E| \max_{k\in \mathbb F_q}\sum_{\substack{y,y'\in F: y'\odot y=k }} 1.$$
Thus, the inequality \eqref{Great} holds and the proof of the proposition is complete.
\end{proof}
\smallskip
\subsection{Proof of Theorem \ref{main1}}
In this subsection, we give a proof of Theorem \ref{main1} for which we heavily use Proposition \ref{MainThm}.
\begin{proof} Note that  the hypothesis $|E||F|\ge {15}^2q^4$ implies that  $|E|\geq 15q^2$ or   $|F|\geq 15q^2$, say that $|E|\geq 15q^2$. Then we see that $|E|^{1/2}|F|^{1/2} \ge 15q^2$ and $|E|^{1/2}\ge \sqrt{15} q.$ This clearly implies that 
\begin{equation}\label{sizecom} |E||F|^{\frac{1}{2}} \ge 15\sqrt{15}~ q^3.\end{equation} 
In view of  Proposition \ref{MainThm}, it suffices to prove that  if $|E||F|\ge {15}^2q^4$ and $|E|\ge 15q^2,$ then
\begin{equation}\label{inequality_EF} \frac{|E||F|}{q} > \sqrt{18q^2|E||F| + 11|E||F|^2+ 4\sqrt{7}q|E||F|^{\frac{3}{2}}}.\end{equation}
By squaring both sides of (\ref{inequality_EF}) and simplifying it, we see that, to obtain the inequality (\ref{inequality_EF}), it is enough to show that
\begin{equation}\label{inequality_EF2} |E||F|> 18q^4+11q^2|F|+4\sqrt{7}q^3 |F|^{\frac{1}{2}}.\end{equation}
Since $|E|\ge 15q^2,$ and hence $|E||F|=\frac{11}{15}|E||F| + \frac{4}{15}|E||F|$ and $\frac{11}{15}|E||F|\ge 11q^2|F|$, for the inequality (\ref{inequality_EF2}) it is enough to prove that
\begin{equation}\label{inequality_EF3}\frac{4}{15}|E||F| > 18q^4+4\sqrt{7}q^3 |F|^{\frac{1}{2}}.\end{equation}
Write
$$\frac{4}{15}|E||F|= \frac{2}{25}|E||F| + \frac{14}{75}|E||F|.$$ Then the inequality (\ref{inequality_EF3}) would follow if we show two inequalities; 
\begin{equation}\label{A1} \frac{2}{25}|E||F|\ge 18q^4 \end{equation}
and
\begin{equation} \label{A2} \frac{14}{75}|E||F| > 4\sqrt{7}q^3 |F|^{\frac{1}{2}}.\end{equation}
The inequality \eqref{A1} follows immediately from our assumption that $|E||F|\ge {15}^2 q^4.$ The inequality \eqref{A2}  is equivalent to 
$$ |E||F|^{\frac{1}{2}} > \frac{150\sqrt{7}}{7}~ q^3,$$ which is immediate from \eqref{sizecom}. This proves the theorem.
\end{proof}

\section{Sharpness of Theorem \ref{main1}}\label{sec4N}
In this section, 
we will show that by giving a concrete example, Theorem \ref{main1} can not be  improved in general.   Let $H$ be a subvariety of $\MFq$ defined by the equation $x_2+x_3=0,$ and $H_i:=H\cap D_i$, so that we have
\begin{equation}\label{defH} H_i=\left\{\left(\begin{matrix}
x_1&x_2\\
-x_2&x_4
\end{matrix}\right) \in M_2(\mathbb F_q): x_1x_4+x_2^2=i \right\}.\end{equation}
 Then it is clear that $|H_i|\sim q^2$, and $-x\in H_i$ if and only if $x\in H_i.$
Let  $E$ be a maximal subset of $H_i$ such that $E\cap (-E)=\phi.$ Then it is obvious that
\begin{equation}\label{defE} |E|\sim q^2.\end{equation}

\begin{proposition}\label{prop:solution}Let $i$ be a non-square number in $\mathbb F_q^*$, and let $E$ be  a subset of $H_i$ given as in the above. Fix $y\in E.$ Then the equation for $x$; $x\odot y=-2i$  has a unique solution $x=-y$ in $E.$
\end{proposition}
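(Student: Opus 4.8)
The plan is to exploit the fact that on the hyperplane $H=\{x_2+x_3=0\}$ the determinant $\|\cdot\|_*$ restricts to a nondegenerate ternary quadratic form whose polar form is exactly the Odot-product, and then to read off the uniqueness from the \emph{anisotropy} of a binary subspace, which is precisely where the hypothesis that $i$ is a non-square enters.

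First I would record the algebraic reduction. Fix $y\in H_i$, so $\|y\|_*=i$, and let $x\in H_i$ be a putative solution. By Lemma \ref{Odotlem}, $\|x+y\|_*=\|x\|_*+\|y\|_*+x\odot y=2i+x\odot y$, so $x\odot y=-2i$ is equivalent to $\|x+y\|_*=0$. Set $w=x+y$; since $H$ is a linear subspace containing $y$, we have $x\in H_i$ iff $w\in H$ and $\|w-y\|_*=i$, and expanding $\|w-y\|_*=\|w\|_*+\|y\|_*-w\odot y$ with $\|w\|_*=0$ shows this last condition is equivalent to $w\odot y=0$. Thus, via $x=w-y$, solutions $x\in H_i$ correspond bijectively to vectors $w\in H$ that are isotropic ($\|w\|_*=0$) and $\odot$-orthogonal to $y$. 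The vector $w=0$, i.e.\ $x=-y$, always qualifies and $-y\in H_i$; the entire content of the proposition is therefore that $w=0$ is the \emph{only} isotropic vector orthogonal to $y$.

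I would then phrase this as a quadratic-forms statement. On $H\cong\mathbb F_q^3$ (coordinates $x_1,x_2,x_4$) the determinant is the nondegenerate form $Q(x)=x_1x_4+x_2^2$, and by Lemma \ref{Odotlem} its polar form $Q(x+y)-Q(x)-Q(y)$ equals $x\odot y$, with $x\odot x=2Q(x)$. Since $Q(y)=i\neq 0$, the line $\langle y\rangle$ is anisotropic and $y^{\perp}$ is a nondegenerate binary space with $H=\langle y\rangle\perp y^{\perp}$. A binary space carries a nonzero isotropic vector iff it is hyperbolic, which happens exactly when $-\det$ of its Gram matrix (with respect to $\odot$) is a square, so I must show this fails here.

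The main work, and the step I expect to be the crux, is the discriminant bookkeeping that forces the relevant sign to be $i$. Computing the Gram matrix of $\odot$ on $H$ in the basis $e_1,e_2,e_4$ gives determinant $-2$, while $\langle y\rangle$ contributes $y\odot y=2i$. Multiplicativity of the Gram determinant across the orthogonal decomposition, valid in $\mathbb F_q^*/(\mathbb F_q^*)^2$, then yields that $y^{\perp}$ has discriminant $-2/(2i)=-1/i\equiv -i$ modulo squares, so $-\det\equiv i$. As $i$ is a non-square by hypothesis, $y^{\perp}$ is anisotropic, $w=0$ is its only isotropic vector, and $x=-y$ is the unique solution. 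I would double-check this by the direct route: eliminating $x_1$ through the linear equation $x\odot y=-2i$ and substituting into $\|x\|_*=i$ produces a conic in $(x_2,x_4)$ whose quadratic part has discriminant $4i$; completing the square shows the constant term vanishes at the center $(-y_2,-y_4)$, so the conic degenerates to $Q_0(x_2+y_2,\,x_4+y_4)=0$ with $Q_0$ anisotropic, whose only point is the center, again giving $x=-y$. The delicate points to watch are the normalization relating $x\odot x$ to $2\|x\|_*$ and keeping all discriminant identities correct modulo squares.
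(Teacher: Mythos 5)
Your proof is correct, but it takes a genuinely different route from the paper's. The paper proves uniqueness by an exact count: it expresses the number $N$ of solutions $(x_1,x_2,x_4)$ of the system $x_1x_4+x_2^2=i$, $y_4x_1+2y_2x_2+y_1x_4=-2i$ as a character sum via orthogonality of $\chi$, splits it into four pieces according to the vanishing of the two dual variables, evaluates each piece using Lemma \ref{Comsquare}, the identity \eqref{Gauss} and $\eta(-1)G_1^2=q$, and lands on $N=q+\eta(i)(q-1)$, which equals $1$ precisely because $\eta(i)=-1$. You instead argue structurally: via $w=x+y$, solutions $x\in H_i$ of $x\odot y=-2i$ biject with isotropic vectors of the ternary form $Q=\det|_H$ lying in $y^{\perp}$; since $Q(y)=i\ne 0$, the space splits as $H=\langle y\rangle\perp y^{\perp}$, multiplicativity of Gram determinants gives $\det(y^{\perp})\equiv -2/(2i)\equiv -i$ modulo squares, and the binary isotropy criterion ($-\det$ of the Gram matrix a square) fails exactly because $i$ is a non-square, so $w=0$ is the only isotropic vector. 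Your reduction and all the quadratic-form facts you invoke are standard and correct in odd characteristic, so the argument is complete. What each approach buys: yours is shorter, makes the role of the non-square hypothesis transparent (it is precisely anisotropy of $y^{\perp}$), and even recovers the paper's count in the complementary case --- when $i$ is a square, $y^{\perp}$ is hyperbolic and the solution set has $2q-1$ points, matching $N=q+\eta(i)(q-1)$; the paper's computation, by contrast, stays entirely inside the Fourier/character-sum toolkit that drives the rest of the paper and produces the exact count uniformly without appealing to the classification of binary forms. One small caveat on your side: the ``direct double-check'' eliminates $x_1$ using the linear equation, which tacitly assumes $y_4\ne 0$ (otherwise one must eliminate a different variable); this does not affect your main argument, which never needs such a choice of coordinates.
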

A proof of Proposition \ref{prop:solution} will be given shortly after a proof of Corollary \ref{counterexample} below. The following  indicates that Theorem \ref{main1} is sharp in general.
\begin{corollary} \label{counterexample} Let $i$ and $E$ be  given as in Proposition \ref{prop:solution}.  Then we have
$$ 0\notin \det(E+E).$$
\end{corollary}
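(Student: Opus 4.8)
The plan is to translate the condition $0\in\det(E+E)$ into a statement about the Odot-product and then read off the answer from Proposition \ref{prop:solution}. First I would fix arbitrary $x,y\in E$ and compute $\det(x+y)$. Since $E\subseteq H_i\subseteq D_i$, both $x$ and $y$ have determinant $i$, so $\|x\|_*=\|y\|_*=i$; applying the identity $\|x+y\|_*=\|x\|_*+\|y\|_*+x\odot y$ from Lemma \ref{Odotlem} gives
$$\det(x+y)=\|x+y\|_*=2i+x\odot y.$$
Consequently $\det(x+y)=0$ holds if and only if $x\odot y=-2i$.

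Next I would invoke Proposition \ref{prop:solution}. For the fixed $y\in E$ the equation $x\odot y=-2i$ has $x=-y$ as its unique solution in $H_i$. But by the construction of $E$ as a subset of $H_i$ with $E\cap(-E)=\emptyset$, the element $-y$ lies in $-E$ and hence \emph{not} in $E$. Therefore no $x\in E$ satisfies $x\odot y=-2i$, which means $\det(x+y)\neq 0$ for every pair $x,y\in E$. As $x,y$ were arbitrary, this yields $0\notin\det(E+E)$, as claimed.

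Since Proposition \ref{prop:solution} carries all of the arithmetic weight, the corollary itself is short: the only two ingredients are the determinant-to-Odot translation above and the exclusion $-y\notin E$. The one point worth checking is that the diagonal contributions $x=y$ cause no trouble, which is automatic here because $\det(2y)=4i\neq 0$ when $i\neq 0$ and $q$ is odd, consistent with $y\odot y=2\|y\|_*=2i\neq -2i$. The genuine obstacle is hidden inside Proposition \ref{prop:solution}: establishing that $-y$ is the \emph{only} solution of $x\odot y=-2i$ in $H_i$ is precisely where the hypothesis that $i$ be a non-square is used, since it is this hypothesis that rules out any further solution on the conic $H_i$ and thereby makes $E$ a valid counterexample.
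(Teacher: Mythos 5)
Your proof is correct and takes essentially the same approach as the paper: translate $\det(x+y)=2i+x\odot y$ via Lemma~\ref{Odotlem}, then combine the uniqueness statement of Proposition~\ref{prop:solution} with the defining property $E\cap(-E)=\emptyset$ to exclude $x\odot y=-2i$. The paper phrases this contrapositively (a solution $x,y\in E$ would force $y=-x$, contradicting $E\cap(-E)=\emptyset$), and your reading of the uniqueness as holding on all of $H_i$ is indeed the correct interpretation of what the proposition's proof establishes.
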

\begin{proof} Since $\det(x+y)=\|x+y\|_*=2i+x\odot y$ for $x,y \in E\subset D_i,$ it suffices to show that 
$$ x\odot y\ne -2i\quad \mbox{for any}~~x,y\in E.$$
Let us  assume that $x\odot y=-2i$ for some $x,y\in E.$ Then by Proposition \ref{prop:solution}, we have the relation $y=-x,$ so $-E\cap E$ is not empty.
However, this is impossible by the condition on $E.$  This proves the corollary.
\end{proof}
\subsection{Proof of Proposition \ref{prop:solution}}
\begin{proof}[Proof of Proposition \ref{prop:solution}]
It is obvious that $-y\odot y=-2i,$ so $x=-y$ is a solution to $x\odot y=-2i$.
Let us show the uniqueness. The conditions $x\in E$,  $y\in E$,  $x\odot y=-2i$, respectively, turn into 
 $$x_1x_4+x_2^2=i,$$ $$y_1y_4+y_2^2=i,$$ $$ (x_1, x_2, -x_2, x_4) \odot (y_1, y_2, -y_2, y_4)= -2i.$$ 

Let $N$ denote the number of solutions to the above equations for $x_1, x_2, x_4.$ We aim to prove that $N=1.$ 
Since $(x_1, x_2, -x_2, x_4) \odot (y_1, y_2, -y_2, y_4)=y_4x_1+2y_2x_2+y_1x_4,$ 
we can write
$$ N=\sum_{\substack{x_1, x_2, x_4\in \mathbb F_q:\\
 y_4x_1+2y_2x_2+y_1x_4=-2i,\\ x_1x_4+x_2^2=i }} 1.$$
 By the orthogonality  of $\chi$, we have
 $$ N=q^{-2} \sum_{x_1, x_2, x_4\in \mathbb F_q}\sum_{s, r\in \mathbb F_q} \chi(s(y_4x_1+2y_2x_2+y_1x_4+2i))
\chi(r(x_1x_4+x_2^2-i)).$$
Decomposing the `internal' sum $\sum_{s,r\in \mathbb F_q}$ into four summands
$$\sum_{s,r\in \mathbb F_q} = \sum_{s=0, r=0} + \sum_{s=0, r\ne 0}+ \sum_{s\ne 0, r=0}+\sum_{s\ne 0,r\ne 0},$$  we obtain four corresponding summands of $N$(in order) $$N=N_1+N_2+N_3+N_4.$$ 
Now we calculate $N_i$s.
First of all, $N_1$ is computed:
\begin{equation}\label{N1} N_1=q^{-2}\sum_{x_1, x_2, x_4\in \mathbb F_q} 1 = q.\end{equation}
Secondly, $N_2$ is given as follows:
$$ N_2=q^{-2} \sum_{x_1, x_2, x_4\in \mathbb F_q}\sum_{r\ne 0}\chi(r(x_1x_4+x_2^2-i))$$
\begin{equation}\label{Equal-N}=q^{-2}\sum_{r\ne 0} \chi(-ir) \left(\sum_{x_2\in \mathbb F_q} \chi(rx_2^2)\right) \left(\sum_{x_1, x_4\in \mathbb F_q} \chi(rx_1x_4)\right).\end{equation}
In (\ref{Equal-N}), the sum over $x_2\in \mathbb F_q$  is equal to $\eta(r) G_1$ by Lemma \ref{Comsquare}, and the one over $x_1,x_4\in \mathbb F_q$ is equal to $q$ by the orthogonality  of $\chi.$ Therefore, we see that
$$ N_2=q^{-1} G_1 \sum_{r\ne 0} \eta(r) \chi(-ri).$$
Now, by the formula in \eqref{Gauss}, we have
\begin{equation}\label{N2} N_2=q^{-1} G_1^2 \eta(-i).\end{equation}
Thirdly, the term $N_3$ is given as follows:
$$N_3=q^{-2} \sum_{x_1, x_2, x_4\in \mathbb F_q}\sum_{s\ne 0} \chi(s(y_4x_1+2y_2x_2+y_1x_4+2i)).$$
Since $y_1y_4+y_2^2=i\ne 0,$  one of $y_i, i=1,2,3,$ is not a zero. Then the orthogonality of $\chi$ yields that
\begin{equation}\label{N3} N_3=0.\end{equation}
Lastly, the term $N_4$ is written as follows:
$$ N_4=q^{-2} \sum_{x_1, x_2, x_4\in \mathbb F_q}\sum_{s\ne 0, r\ne 0} \chi(s(y_4x_1+2y_2x_2+y_1x_4+2i))
\chi(r(x_1x_4+x_2^2-i))$$
\begin{equation}\label{Equal-N-4}=q^{-2}\sum_{s\ne 0, r\ne 0} \chi(2is)\chi(-ir) \sum_{x_4\in \mathbb F_q} \chi(sy_1x_4) 
\left(\sum_{x_1\in \mathbb F_q} \chi((sy_4+ rx_4)x_1) \right)\left(\sum_{x_2\in\mathbb F_q} \chi(rx_2^2+2sy_2x_2) \right).\end{equation}
In the term (\ref{Equal-N-4}), by the orthogonality  of $\chi$, the sum over $x_1\in \mathbb F_q$ is equal to $q$ if $x_4=-sy_4/r$, and $0$ otherwise. By the formula \eqref{eqG}, the sum over $x_2\in \mathbb F_q$ is equal to 
$$ \eta(r) G_1 \chi\left(\frac{s^2y_2^2}{-r}\right).$$
It follows that
$$ N_4= q^{-1}G_1 \sum_{s\ne 0, r\ne 0}\eta(r) \chi(-ir)\chi(2is) \chi\left(\frac{(y_1y_4+y_2^2)s^2}{-r}\right).$$
Since $y_1y_4+y_2^2=i,$ $N_4$ is written as
\begin{equation}\label{Equal-2} N_4=q^{-1}G_1 \sum_{r\ne 0} \eta(r) \chi(-ir) \left(\sum_{s\ne 0} \chi\left(\frac{is^2}{-r} +2is\right)\right).\end{equation}
Using Lemma \ref{Comsquare} to compute the sum over $s\ne 0$ in (\ref{Equal-2}), we obtain that
$$N_4=q^{-1}G_1^2 \sum_{r\ne 0}\eta(-i) -q^{-1}G_1 \sum_{r\ne 0} \eta(r)\chi(-ir)$$
$$ = q^{-1}G_1^2\eta(-i) (q-1) -q^{-1}G_1^2 \eta(-i)=G_1^2\eta(-i)-2q^{-1}G_1^2 \eta(-i).$$ Adding all $N_i$ with $1\leq i \leq 4$,  we obtain
$$ N=N_1+N_2+N_3+N_4=q+\eta(i) G_1^2 \eta(-1) -\eta(i) q^{-1} G_1^2 \eta(-1).$$
Since $i$ is a non-square number,  $\eta(i)=-1$. Recall from \eqref{Gauss-1} that $G_1^2 \eta(-1)=q.$
Thus $N=1,$ as required. This completes the proof of Lemma \ref{counterexample}.
\end{proof}

\section{Proof of Theorem \ref{Alexset}}\label{secEx}

In this section we prove Theorem \ref{Alexset} by using Theorem \ref{main1} and a result on the size of the intersection of a product type subset $S$ and $D_i$  with $i\ne 0.$ For the latter result, we estimate $|S\cap D_i|$ by adapting the method which Hart and Iosevich \cite{HI08} used in studying the size of the dot-product set determined by a set in $\mathbb F_q^d.$

\begin{lemma}\label{bigcor} Let $S\subseteq M_2(\mathbb F_q)$ be of product type. Then, for each $i\in \mathbb F_q^*,$ we have
$$ \left| |S\cap D_i|-\frac{|S|}{q}\right| \le q^{1/2} |S|^{1/2}.$$
\end{lemma}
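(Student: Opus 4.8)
The plan is to express $|S\cap D_i|$ as a Fourier sum using the Odot-product machinery developed in Section \ref{sec2N}, and then isolate the main term $|S|/q$ and bound the error using the product structure of $S$. Since $S$ is of product type, we may write $S = S_1 \times S_2$ under the identification $M_2(\mathbb F_q) = \mathbb F_q^2 \times \mathbb F_q^2$, and we will exploit this factorization when estimating the error term. First I would write, using the orthogonality of $\chi$ to detect the determinant condition $\det(x) = i$,
$$ |S\cap D_i| = q^{-1}\sum_{x\in S}\sum_{r\in \mathbb F_q} \chi\big(r(\|x\|_* - i)\big) = \frac{|S|}{q} + q^{-1}\sum_{r\in \mathbb F_q^*}\chi(-ir)\sum_{x\in S}\chi(r\|x\|_*),$$
so that the error term is exactly $q^{-1}\sum_{r\neq 0}\chi(-ir)\sum_{x\in S}\chi(r\|x\|_*)$, and the task reduces to bounding this sum.

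Next I would analyze the inner sum $\sum_{x\in S}\chi(r\|x\|_*)$, writing $x = (u, v)$ with $u = (x_1, x_2) \in S_1$ and $v = (x_3, x_4) \in S_2$ and noting that $\|x\|_* = x_1 x_4 - x_2 x_3$ is a bilinear form in $u$ and $v$. The product structure lets me write this as a sum over $u \in S_1$ and $v \in S_2$ of $\chi$ applied to this bilinear expression. The standard route, mirroring Hart and Iosevich's treatment of dot-product sets, is to apply the Cauchy--Schwarz inequality to decouple the two factors and then square, bounding the full error $\sum_{r\neq 0}\big|\sum_{x\in S}\chi(r\|x\|_*)\big|$ by first passing through $\big(\sum_{r\neq 0}\big|\sum_{x\in S}\chi(r\|x\|_*)\big|^2\big)^{1/2}$ times $(q-1)^{1/2}$, or more directly by expanding the square over $r$ and using orthogonality. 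Expanding $\sum_{r}\big|\sum_{x}\chi(r\|x\|_*)\big|^2 = \sum_{x,x'\in S}\sum_r \chi\big(r(\|x\|_*-\|x'\|_*)\big)$, the sum over $r\in \mathbb F_q$ collapses via orthogonality to $q$ times the number of pairs $(x,x')\in S\times S$ with $\|x\|_* = \|x'\|_*$, after which I subtract the $r=0$ contribution. Bounding this count of equal-determinant pairs is where the product structure must be used carefully.

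The main obstacle I expect is controlling the number of pairs $(x,x')\in S\times S$ with equal determinant without incurring a loss. The claimed bound $q^{1/2}|S|^{1/2}$ on the error is quite sharp, so a crude estimate of the equal-determinant count will not suffice; I would need the bilinearity of $\|\cdot\|_*$ together with the fact that for fixed $u, u' \in S_1$ the condition $\|x\|_* = \|x'\|_*$ is a single linear equation in $(v, v') \in S_2 \times S_2$, which gives savings of a factor of $q$ whenever the associated linear form is nonzero. The delicate point is handling the degenerate cases where the linear form vanishes, and tracking these contributions so that the final bound is exactly $q^{1/2}|S|^{1/2}$ rather than something weaker. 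Once the equal-determinant count is bounded by roughly $|S|^2/q + |S|$, a Cauchy--Schwarz step against the $r$-sum should yield the stated estimate.
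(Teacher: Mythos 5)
Your setup is the same as the paper's (main term $|S|/q$ plus the error $R(i)=q^{-1}\sum_{r\ne 0}\chi(-ir)T(r)$ with $T(r)=\sum_{x\in S}\chi(r\|x\|_*)$), but the step where you pass to $\bigl(\sum_{r\ne 0}|T(r)|^2\bigr)^{1/2}$ and then to the equal-determinant pair count is a genuine gap: it discards the phase $\chi(-ir)$, and that cancellation is exactly what the lemma relies on. Concretely, take $S_1=S_2=\mathbb F_q\times\{0\}$, so that every $x\in S$ is singular, $|S|=q^2$, and $T(r)=|S|$ for all $r$. Then the equal-determinant count is exactly $|S|^2$, $\sum_{r\ne 0}|T(r)|^2=(q-1)q^4$, and your Cauchy--Schwarz step in $r$ yields only $|R(i)|\lesssim q^2$, worse than the required $q^{1/2}|S|^{1/2}=q^{3/2}$; the true error is $|R(i)|=q$, coming entirely from $\sum_{r\ne 0}\chi(-ir)=-1$. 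So no bound on the pair count can repair the absolute-value version of the argument. Moreover, your stated target bound, count $\lesssim |S|^2/q+|S|$, is both false and too strong: by your own arithmetic it would give $|R(i)|\lesssim |S|^{1/2}$, but for $S_1=\{(1,0)\}$, $S_2=\mathbb F_q\times\{1\}$ (so $S\subseteq D_1$, $|S|=q$) one has $\bigl||S\cap D_1|-|S|/q\bigr|=q-1$, showing simultaneously that the count bound fails (the count is $|S|^2=q^2$, not $O(q)$) and that the lemma's bound $q^{1/2}|S|^{1/2}=q$ is essentially sharp, so any route producing $o(q^{1/2}|S|^{1/2})$ is impossible. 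Relatedly, your heuristic that a nonzero linear equation in $(v,v')$ saves a factor of $q$ is valid only when summing over the full space; restricted to $S_2\times S_2$ the kernel can contain everything (e.g.\ $S_2$ on a line through the origin), which is precisely the degenerate case you flagged, and for which no counting mechanism exists.

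The paper's proof avoids this trap by applying Cauchy--Schwarz only in the variable $\alpha\in S_1$ and completing that sum to all of $\mathbb F_q^2$, so that orthogonality is used in $\alpha$ rather than in $r$: the expansion collapses to the constraint $r\beta=r'\beta'$ while the phases $\chi(i(r'-r))$ survive. The diagonal $r=r'$ forces $\beta=\beta'$ and contributes $(q-1)|S_2|$, giving $|R(i)|^2\le q|S|$, while the off-diagonal terms, after the substitution $a=r'$, $b=r/r'$ with $b\ne 1$, carry the factor $\sum_{a\ne 0}\chi(ia(1-b))=-1$ (this is where $i\ne 0$ enters) and are therefore non-positive. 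That sign argument is exactly the cancellation your absolute-value step throws away, and it is what neutralizes the degenerate product configurations above. If you want to salvage your outline, you must keep both phases $\chi(i(r'-r))$ through the expansion, i.e.\ expand $|R(i)|^2$ over pairs $(r,r')$ before taking any absolute values, which lands you back on the paper's argument.
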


\begin{proof} Let $S=\left(\begin{matrix} 
S_1\\
S_2
\end{matrix}\right)$ for some $S_1,S_2\subseteq \mathbb F_q^2.$ It is clear that $|S|=|S_1||S_2|.$ It follows that 
$$ |S\cap D_i|= \sum_{\alpha\in S_1, \beta\in S_2: \det(\alpha, \beta)=i} 1,$$
where $\det(\alpha, \beta):=\alpha_1\beta_2-\alpha_2\beta_1$ for $\alpha=(\alpha_1, \alpha_2), \beta=(\beta_1, \beta_2)\in \mathbb F_q^2.$ By the orthogonality of $\chi$, we have
\begin{align*}\label{remaindot} |S\cap D_i|&= q^{-1} \sum_{\alpha\in S_1, \beta\in S_2} \sum_{r\in \mathbb F_q} \chi(r(\det(\alpha, \beta)-i))\\
&= \frac{|S_1||S_2|}{q} +  q^{-1} \sum_{\alpha\in S_1, \beta\in S_2} \sum_{r\ne 0} \chi(r(\det(\alpha, \beta)-i))\\
&:= \frac{|S|}{q} + R(i).\end{align*}
Hence, in order to prove the lemma, it will be enough to show that 
$$ |R(i)|^2\le q|S|.$$
Now, applying the Cauchy-Schwarz inequality to $|R(i)|^2$ w.r.t $\alpha\in S_1,$ and then replacing the index set ``$\alpha\in S_1$" by ``$\alpha\in \mathbb F_q^2$", we see 
$$ |R(i)|^2 \le q^{-2} \left(\sum_{\alpha\in S_1} \left|\sum_{\beta\in S_2, r\ne 0} \chi(r(\det(\alpha, \beta)-i))\right|\right)^2\le q^{-2} |S_1| \sum_{\alpha\in \mathbb F_q^2} \left|\sum_{\beta\in S_2, r\ne 0} \chi(r(\det(\alpha, \beta)-i))\right|^2.$$
Note that the rightmost term of this inequality is in turn equal to 
$$q^{-2} |S_1| \sum_{\alpha\in \mathbb F_q^2} \sum_{\beta, \beta'\in S_2, r,r'\ne 0} \chi(i(r'-r)) \chi(r\det(\alpha, \beta)-r'\det(\alpha, \beta^\prime)).$$
Next, we compute the sum over $\alpha\in \mathbb F_q^2$ by using the orthogonality of $\chi$ and  obtain
$$ |R(i)|^2 \le |S_1| \sum_{\beta, \beta^\prime\in S_2, r, r'\ne 0: r\beta=r'\beta^\prime} \chi(i(r'-r)).$$
Considering the cases that $r=r'$ and $r\ne r'$, we have
$$|R(i)|^2 \le |S_1| \sum_{\beta\in S_2, r\ne 0} 1 + |S_1| \sum_{\substack{\beta, \beta^\prime\in S_2, r,r'\ne 0:\\r\ne r', r\beta=r'\beta^\prime}} \chi(i(r'-r))$$
$$\le q|S_1||S_2| + |S_1|  \sum_{\substack{\beta, \beta^\prime\in S_2, r,r'\ne 0:\\r/r'\ne 1, (r/r')\beta=\beta^\prime}} \chi(i r'(1-r/r')).$$
By a change of variables with $a=r', ~b=r/r'$, 
\begin{equation}\label{eqRi} |R(i)|^2\leq q|S| + |S_1|\sum_{\substack{\beta, \beta^\prime\in S_2, a\ne 0, b\ne 0,1:\\ b\beta=\beta^\prime}} \chi(i a(1-b)).\end{equation}
The second term  in RHS of the inequality \eqref{eqRi} is non-positive, because the sum over $a\ne 0$ is -1 by the orthogonality of $\chi.$ Hence, we obtain $|R(i)|^2\le q|S|,$ as required.
\end{proof}

\smallskip

\subsection{Proof of Theorem \ref{Alexset}}
 \begin{proof}[Proof of Theorem \ref{Alexset}]
Since $|E|,|F| \ge C q^3,$ we see from Lemma \ref{bigcor} that
$ |E\cap D_i|\sim |E|/q$ and $|F\cap D_j| \sim |F|/q.$ Since $(E\cap D_i)\subseteq D_i,$ and $(F\cap D_j)\subseteq D_j,$  
the theorem follows from Theorem \ref{main1}.
\end{proof}

\section{Sum of two matrix sets}\label{sec5N}
For $E, F\subseteq M_2(\mathbb F_q)$, the sum set $E+F$ is defined by
$$ E+F:=\{x+y\in M_2(\mathbb F_q): x\in E, y\in F\}.$$
In this section, we shall give a `general' lower bound for sizes of sets $E+F$ when $E$ and $F$ are subsets  $ D_i$ and $D_j$ for nonzero $i,j\in \mathbb F_q.$  This result is one of main ingredients of the proof of Theorem \ref{thm0} given in the next section.

Recall that $N_t(E,F)$ denotes the number of pairs $(x,y)\in E\times F$ such that $\det(x+y)=t$.
\begin{lemma}\label{secondlem} 
If $E\subseteq D_i,~F\subseteq D_j$ for nonzero $i,j\in \mathbb F_q$, then we have
$$ \max_{t\in \mathbb F_q} N_t(E,F) \ll \frac{|E||F|}{q} + q |E|^{\frac{1}{2}}|F|^{\frac{1}{2}}.$$
\end{lemma}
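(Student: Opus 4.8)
The plan is to convert the determinant count into a count for the Odot-product and then run an $L^2$ (energy) argument, which is what produces the sharp square-root error $q|E|^{1/2}|F|^{1/2}$ that Proposition \ref{MainThm} cannot reach. Since $E\subseteq D_i$ and $F\subseteq D_j$, Lemma \ref{Odotlem} gives $\det(x+y)=\|x+y\|_*=i+j+x\odot y$, so $N_t(E,F)=W_\ell(E,F)$ with $\ell=t-i-j$, and it suffices to bound $\max_\ell W_\ell$. Writing $W_\ell=\frac{|E||F|}{q}+R_\ell$ and expanding $1_{\{x\odot y=\ell\}}$ by orthogonality of $\chi$ yields $R_\ell=q^{-1}\sum_{s\ne0}\chi(-s\ell)T(s)$ with $T(s)=\sum_{x\in E,\,y\in F}\chi(s\,x\odot y)$. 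Cauchy--Schwarz in the variable $s$ gives $|R_\ell|^2\le q^{-1}\sum_{s\ne0}|T(s)|^2$ uniformly in $\ell$, and expanding the square and summing the resulting geometric sum over $s$ turns this into
\[
|R_\ell|^2\ \le\ Q-\frac{|E|^2|F|^2}{q},\qquad
Q:=\#\{(x,x',y,y')\in E^2\times F^2:\ x\odot y=x'\odot y'\}.
\]
Thus the entire lemma reduces to the single energy estimate $Q\le \frac{|E|^2|F|^2}{q}+C\,q^2|E||F|$.

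To prove this I would split $Q$ according to whether the two $F$-coordinates coincide and whether the two $E$-coordinates coincide. Consider first the diagonal $y=y'$, which equals $\sum_{x,x'\in E}\#\{y\in F:(x-x')\odot y=0\}$. The pairs with $x=x'$ contribute $|E||F|$; for $x\ne x'$ the matrix $x-x'$ is nonzero, so $\{y:(x-x')\odot y=0\}$ is a hyperplane and its intersection with the quadric $D_j$ has at most $Cq^2$ points. This last count follows from Lemma \ref{Focirl} and the Kloosterman bound, which together give $|\widetilde{D_j}(w)|\le 2q^{3/2}$ for $w\ne0$. Summing over the at most $|E|^2$ such pairs and assuming $|E|\le|F|$ without loss of generality bounds the whole diagonal by $Cq^2|E||F|$; the symmetric diagonal $x=x'$ is identical. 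I emphasize that this step uses \emph{both} determinant constraints, each saving a factor of $q$: a single Cauchy--Schwarz that replaces only one of $E,F$ by its ambient variety inflates the diagonal to $\sim q^3|E||F|^2$, the very $|E||F|^2$-type loss that makes Proposition \ref{MainThm} suboptimal here.

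It then remains to control the fully off-diagonal contribution $Q_{\mathrm{off}}$, where $x\ne x'$ and $y\ne y'$. Fourier-expanding the indicator $1_{\{x\odot y=x'\odot y'\}}$, the frequency $s=0$ supplies the main term $q^{-1}(|E|^2-|E|)(|F|^2-|F|)\le \frac{|E|^2|F|^2}{q}$, and the claim is that the remaining frequencies contribute $O(q^2|E||F|)$. This is the technical core: since both $x-x'$ and $y-y'$ are now nonzero, applying the transforms $\widetilde{D_i},\widetilde{D_j}$ of Lemma \ref{Focirl} to the two determinant conditions reduces the inner sums to honest Kloosterman sums, to which $\bigl|\sum_{t\ne0}\chi(at+bt^{-1})\bigr|\le 2q^{1/2}$ applies with no exceptional terms; this square-root cancellation is exactly what prevents the off-diagonal part from producing a term of size $|E||F|^2$.

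Conceptually the estimate is the expander mixing lemma for the bipartite graph on $D_i\times D_j$ with $x\sim y\iff x\odot y=\ell$: it is $\sim q^2$-regular on $\sim q^3$ vertices, and the assertion $Q\le \frac{|E|^2|F|^2}{q}+Cq^2|E||F|$ is precisely the statement that its second singular value is $O(q)$. I expect the main obstacle to be this off-diagonal cancellation — arranging the two determinant constraints to stay coupled so that no spurious factor of $q$ (the gap between the admissible $q^2|E||F|$ and the fatal $|E||F|^2$) is introduced.
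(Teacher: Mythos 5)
There is a genuine, fatal gap: you reduce the entire lemma to the energy estimate $Q\le \frac{|E|^2|F|^2}{q}+Cq^2|E||F|$, and that estimate is \emph{false}. Take
$E=\left\{\left(\begin{smallmatrix} a&t\\ 0&i/a\end{smallmatrix}\right): a\in\mathbb F_q^*,\ t\in\mathbb F_q\right\}\subseteq D_i$ and
$F=\left\{\left(\begin{smallmatrix} b&s\\ 0&j/b\end{smallmatrix}\right): b\in\mathbb F_q^*,\ s\in\mathbb F_q\right\}\subseteq D_j$,
so $|E|=|F|=q(q-1)$. Here $x\odot y=ja/b+ib/a$ depends only on $r=a/b$, whence $W_\ell(E,F)=q^2(q-1)\nu(\ell)$ with $\nu(\ell)=\#\{r: jr^2-\ell r+i=0\}\in\{0,1,2\}$, and so $R_\ell=q(q-1)\bigl(q\nu(\ell)-(q-1)\bigr)$. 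Thus $|R_\ell|\sim q^3$ for $\Theta(q)$ values of $\ell$ (according as $\ell^2-4ij$ is a nonzero square or a nonsquare), giving
\begin{equation*}
Q-\frac{|E|^2|F|^2}{q}=\sum_{\ell}R_\ell^2\ \sim\ q^7,\qquad\text{while}\qquad Cq^2|E||F|\sim q^6.
\end{equation*}
The structural point this example exposes is that the lemma's error term $q|E|^{1/2}|F|^{1/2}\sim q^3$ is attained \emph{simultaneously} for a positive proportion of all $\ell$; consequently any argument that dominates $\max_\ell|R_\ell|^2$ by $\sum_\ell|R_\ell|^2$ (your Cauchy--Schwarz in $s$) must lose a factor of $q$ and can only yield $|R_\ell|\ll q^{3/2}|E|^{1/2}|F|^{1/2}$. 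Your spectral gloss conflates two different statements: a second singular value $O(q)$ would give the pointwise-in-$\ell$ mixing bound, not the summed energy bound you actually need. And the step you yourself flag as the ``technical core'' --- the off-diagonal frequencies --- is exactly where the failure sits (in the example above the diagonals are $O(q^6)$, so the off-diagonal part carries the $q^7$): moreover, the plan of ``applying $\widetilde{D_i},\widetilde{D_j}$'' there has no mechanism, since those transforms require complete sums over the varieties, and completing $E,F$ to $D_i,D_j$ inside an oscillating sum needs precisely the Cauchy--Schwarz positivity step you set out to avoid. (A smaller slip: under your WLOG $|E|\le|F|$, your method bounds the $x=x'$ diagonal by $q^2|F|^2$, not $q^2|E||F|$; this one is fixable by counting, for each $(x,y)$, the $y'\in D_j$ in the affine hyperplane $x\odot y'=x\odot y$.)

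Your opening premise is also mistaken: Proposition \ref{MainThm} \emph{does} reach the square-root error, and that is exactly how the paper proves the lemma. Applying the proposition and $\sqrt{a+b}\le\sqrt a+\sqrt b$ gives
$\max_t N_t(E,F)\ll \frac{|E||F|}{q}+q|E|^{1/2}|F|^{1/2}+|E|^{1/2}|F|+q^{1/2}|E|^{1/2}|F|^{3/4}$,
and applying it again with the roles of $E$ and $F$ swapped gives the mirrored bound. If $|F|\le q^2$, the last two terms of the first bound are $\le q|E|^{1/2}|F|^{1/2}$; if $|E|\le q^2$, the mirrored ones are; and if $|E|,|F|>q^2$, all of them are $\le |E||F|/q$. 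So the seemingly fatal $|E||F|^2$-type terms are absorbed after symmetrization and a three-case split, with no new energy estimate required --- which is fortunate, since as shown above the energy estimate you would need is not true.
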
 
\begin{proof}
From Proposition \ref{MainThm}, we have
$$ \max_{t\in \mathbb F_q} N_t(E,F)\le  \frac{|E||F|}{q} + \sqrt{18q^2|E||F| + 11|E||F|^2+ 4\sqrt{7}q|E||F|^{\frac{3}{2}}}.$$

Using the basic fact that $ \sqrt{a+b} \le \sqrt{a}+\sqrt{b}$ for $a,b\ge 0,$ we obtain the estimate:
\begin{equation}\label{e11}
\max_{t\in \mathbb F_q} N_t(E,F) \ll \frac{|E||F|}{q} + q |E|^{\frac{1}{2}} |F|^{\frac{1}{2}} + |E|^{\frac{1}{2}} |F| + q^{\frac{1}{2}} |E|^{\frac{1}{2}} |F|^{\frac{3}{4}}.
\end{equation}
Switching roles of $E$ and $F$ in \eqref{e11}, we also obtain
\begin{equation}\label{e22} 
\max_{t\in \mathbb F_q} N_t(E,F) \ll \frac{|E||F|}{q} + q |E|^{\frac{1}{2}} |F|^{\frac{1}{2}} + |E||F|^{\frac{1}{2}} + q^{\frac{1}{2}} |E|^{\frac{3}{4}}|F|^{\frac{1}{2}}.
\end{equation}

For $1\leq r\leq 4$, let $a_r$ be the $r$-th term in RHS of (\ref{e11}), and for $r=3,4$,
$a_r^\prime$ the $r$-th term in RHS of (\ref{e22}):
$$a_1+a_2+a_3+a_4=\frac{|E||F|}{q} + q |E|^{\frac{1}{2}} |F|^{\frac{1}{2}} + |E|^{\frac{1}{2}} |F| + q^{\frac{1}{2}} |E|^{\frac{1}{2}} |F|^{\frac{3}{4}}$$

$$a_1+a_2+a_3^\prime+a_4^\prime=\frac{|E||F|}{q} + q |E|^{\frac{1}{2}} |F|^{\frac{1}{2}} + |E||F|^{\frac{1}{2}} + q^{\frac{1}{2}} |E|^{\frac{3}{4}}|F|^{\frac{1}{2}}.$$

To prove the lemma, we consider two cases.
\smallskip

\textbf{Case 1:} Assume that $|E|\le q^2$ or $|F|\le q^2.$ 
Indeed, if $|E|\le q^2$, then it follows from \eqref{e22} that $a_2\ge a_3^\prime$ and $a_2\ge a_4^\prime.$ If $|F|\le q^2$, then we see from \eqref{e11} that $a_2\ge a_3$ and $a_2\ge a_4.$ Thus, in this case, we have 
$$\max_{t\in \mathbb F_q} N_t(E,F) \ll a_1+a_2.$$

\noindent\textbf{Case 2:} Assume that $|E|>q^2$ and $|F|>q^2.$ 
It follows from \eqref{e11} that $a_1> a_3$ and $a_1> a_4.$ Hence,  in this case we also have
$$\max_{t\in \mathbb F_q} N_t(E,F)\ll a_1+a_2.$$

This completes the proof.
\end{proof}

Recall that  $W_\ell(E,F)$ denotes the number of pairs $(x,y)\in E\times F$ such that $x\odot y=\ell$. Note that if $E\subseteq D_i$ and $ F\subseteq D_j$ for some $i,j\in \mathbb F_q$, then   we have $W_\ell(E,F)=N_t(E,F),$ where $t=\ell+i+j$. Thus Lemma \ref{secondlem} can be restated as follows.

\begin{corollary}\label{easyCor}
Let $E, F$ be the sets given in Lemma \ref{secondlem}. Then we have
$$ \max_{\ell\in \mathbb F_q} W_\ell(E,F) \ll \frac{|E||F|}{q} + q |E|^{\frac{1}{2}}|F|^{\frac{1}{2}}.$$
\end{corollary}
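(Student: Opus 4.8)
The plan is to reduce the corollary directly to Lemma \ref{secondlem} by exploiting the algebraic identity relating the Odot-product to the determinant of a sum, which was recorded in Lemma \ref{Odotlem}. The essential point is that, under the standing hypotheses $E \subseteq D_i$ and $F \subseteq D_j$, the quantities $W_\ell(E,F)$ and $N_t(E,F)$ count exactly the same pairs, merely indexed differently.

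First I would invoke Lemma \ref{Odotlem}, which gives $\|x+y\|_* = \|x\|_* + \|y\|_* + x \odot y$ for all $x, y \in M_2(\mathbb F_q)$. Since every $x \in E$ satisfies $\|x\|_* = \det(x) = i$ and every $y \in F$ satisfies $\|y\|_* = j$, this identity specializes to $\det(x+y) = i + j + (x \odot y)$ for each pair $(x,y) \in E \times F$. Consequently, for any $\ell \in \mathbb F_q$, the condition $x \odot y = \ell$ is equivalent to $\det(x+y) = \ell + i + j$, so that $W_\ell(E,F) = N_t(E,F)$ with $t := \ell + i + j$.

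Next I would observe that the affine map $\ell \mapsto \ell + i + j$ is a bijection of $\mathbb F_q$ onto itself, whence
$$\max_{\ell \in \mathbb F_q} W_\ell(E,F) = \max_{t \in \mathbb F_q} N_t(E,F).$$
Applying Lemma \ref{secondlem} to the right-hand side immediately yields the asserted bound $\ll \frac{|E||F|}{q} + q|E|^{1/2}|F|^{1/2}$, completing the argument.

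Since the two counting functions coincide after a harmless shift of the index, there is genuinely no obstacle here: all of the analytic work, namely the Fourier-theoretic estimates packaged in Proposition \ref{MainThm} and the case analysis in Lemma \ref{secondlem}, has already been carried out upstream. The only item requiring even a moment's attention is that the shift $t = \ell + i + j$ is well-defined and bijective on $\mathbb F_q$, which is immediate because $i$ and $j$ are fixed field elements. Thus the corollary is a pure restatement of Lemma \ref{secondlem} through the dictionary provided by Lemma \ref{Odotlem}.
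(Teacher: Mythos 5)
Your proposal is correct and follows exactly the paper's own reasoning: the paper likewise observes that $W_\ell(E,F)=N_t(E,F)$ with $t=\ell+i+j$ (via the identity $\|x+y\|_*=\|x\|_*+\|y\|_*+x\odot y$ from Lemma \ref{Odotlem} applied to $E\subseteq D_i$, $F\subseteq D_j$) and then reads off the bound from Lemma \ref{secondlem}. Your explicit remark that the shift $\ell\mapsto\ell+i+j$ is a bijection of $\mathbb F_q$, so the two maxima coincide, is the same (implicit) step in the paper, just spelled out.
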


For any  $E, F\subseteq M_2(\mathbb F_q)$, not necessarily contained in $ D_i$ for some $i$,  we produce an upper bound of $ W_0(E,F)$, which  will be also used in proving the main result of this section.
\begin{lemma} \label{odotzero}
Let $E, F\subseteq M_2(\mathbb F_q).$ Then we have
\begin{equation}\label{zeroK} W_0(E,F)\le \frac{|E||F|}{q} + \sqrt{2} q^{2} |E|^{\frac{1}{2}}|F|^{\frac{1}{2}}.\end{equation}
\end{lemma}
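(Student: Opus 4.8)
The plan is to run the same discrete Fourier argument that underlies Proposition \ref{MainThm}, but applied directly to the single value $\ell=0$ and without the restriction $E\subseteq D_i$. First I would expand the indicator of the condition $x\odot y=0$ using the orthogonality of $\chi$: since $q^{-1}\sum_{s\in\mathbb F_q}\chi(s(x\odot y))$ equals $1$ when $x\odot y=0$ and $0$ otherwise,
$$ W_0(E,F)=q^{-1}\sum_{x\in E,\,y\in F}\sum_{s\in\mathbb F_q}\chi(s(x\odot y))=\frac{|E||F|}{q}+R, $$
where $R:=q^{-1}\sum_{s\ne 0}\sum_{x\in E,\,y\in F}\chi(s(x\odot y))$ collects the contribution of $s\ne 0$. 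Since $W_0(E,F)$ and $|E||F|/q$ are real, $R$ is real, so it suffices to prove $|R|\le \sqrt2\,q^2|E|^{1/2}|F|^{1/2}$; in fact I expect the slightly stronger bound $|R|\le q^2|E|^{1/2}|F|^{1/2}$, which comfortably implies the stated estimate.

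Next I would apply the Cauchy--Schwarz inequality in the variable $x\in E$ and then enlarge the outer index set from $E$ to all of $\MFq$, which only increases the nonnegative summand:
$$ |R|^2\le q^{-2}|E|\sum_{x\in \MFq}\left|\sum_{y\in F,\,s\ne 0}\chi(s(x\odot y))\right|^2. $$
Expanding the square and using the bilinearity of the Odot-product (Lemma \ref{Odotlem}), the exponent becomes $x\odot(sy-s'y')$, so the orthogonality relation \eqref{odotorth} collapses the sum over $x$ to $q^4\,\delta_0(sy-s'y')$. Hence
$$ |R|^2\le q^2|E|\sum_{\substack{y,y'\in F\\ s,s'\in\mathbb F_q^*:\,sy=s'y'}} 1. $$

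The crux is therefore the purely combinatorial task of bounding $\mathcal N:=\#\{(y,y',s,s')\in F\times F\times\mathbb F_q^*\times\mathbb F_q^*:sy=s'y'\}$. I would count by fixing the pair $(s,s')$ and setting $b=s/s'\in\mathbb F_q^*$: the equation $sy=s'y'$ becomes $y'=by$, so for each of the $(q-1)^2$ choices of $(s,s')$ the number of admissible $(y,y')$ equals $\#\{y\in F:by\in F\}\le|F|$. This yields $\mathcal N\le (q-1)^2|F|\le q^2|F|$, whence $|R|^2\le q^4|E||F|$ and $|R|\le q^2|E|^{1/2}|F|^{1/2}\le\sqrt2\,q^2|E|^{1/2}|F|^{1/2}$, completing the proof. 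The only delicate point is that this counting must hold for arbitrary $E,F$ with no determinant restriction and possibly containing $\mathbf 0$; the device of fixing $(s,s')$ handles this automatically, since the trivial bound $\#\{y\in F:by\in F\}\le|F|$ needs no nonvanishing hypothesis, so no separate case analysis for $\mathbf 0\in F$ (or for the diagonal $s=s'$) is required.
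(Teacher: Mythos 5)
Your proof is correct and follows essentially the same route as the paper: the orthogonality expansion $W_0(E,F)=|E||F|/q+R$, Cauchy--Schwarz in $x\in E$, enlargement of the index set to all of $\MFq$, and the Odot-orthogonality collapse to the count of solutions of $sy=s'y'$. The only difference is bookkeeping in that final count --- you fix $(s,s')$ and bound the pairs $(y,y')$ by $|F|$ at once, whereas the paper splits into the cases $s=s'$ and $s\ne s'$ --- which is why you even obtain the slightly sharper constant $1$ in place of $\sqrt{2}$.
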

\begin{proof} We proceed as in the proof of  Lemma \ref{bigcor}. By the orthogonality  of $\chi,$  we can write  
$$ W_0(E,F)= \sum_{x\in E, y\in F: x\odot y=0} 1 = \frac{|E||F|}{q} + q^{-1} \sum_{x\in E, y\in F, s\ne 0} \chi(s(x\odot y)).$$ Let $$\Omega:=q^{-1} \sum_{x\in E, y\in F, s\ne 0} \chi(s(x\odot y)).$$
Notice that to complete the proof of the lemma, it suffices to prove that
$$ |\Omega|^2 \le 2q^4 |E||F|.$$ 
Let us bound $|\Omega|^2.$ 
First, applying the Cauchy-Schwarz inequality to $|\Omega|^2$ w.r.t $x\in E,$ and next replacing the index set ``$x\in E$" by ``$x\in M_2(\mathbb F_q)$", we obtain 
$$|\Omega|^2 
\le q^{-2} |E| \sum_{x\in M_2(\mathbb F_q)} \left|\sum_{y\in F, s\ne 0} \chi(s(x\odot y))\right|^2.$$
Note that the term of the RHS of this inequality is in turn equal to 
\begin{equation*}\label{Inequality_Omega}q^{-2} |E| \sum_{x\in M_2(\mathbb F_q)} \sum_{y, y'\in F, s,s'\ne 0}  \chi(x\odot (sy-s'y')).\end{equation*}
Using the orthogonality of $\chi$ for the Odot-product to compute the sum over $x\in M_2(\mathbb F_q)$, we obtain
$$|\Omega|^2\le q^2|E| \sum_{y,y'\in F, s,s'\ne 0: sy=s'y'} 1.$$
Considering the cases that $s=s'$ and $s\ne s'$, we have
$$|\Omega|^2\le q^2|E| \sum_{y\in F, s\ne 0} 1 + q^2|E| \sum_{\substack{y,y'\in F, s,s'\ne 0:\\s\ne s', sy=s'y'}} 1\le  q^3|E||F| +q^2|E| \sum_{\substack{y,y'\in F, s,s'\ne 0:\\s\ne s', sy=s'y'}} 1.$$
Whenever we fix $y\in F, s, s'\ne 0,$ there is at most one $y'\in F$ such that $sy=s'y'.$ Therefore, 
$$ |\Omega|^2 \le q^3|E||F| + q^4|E||F| \le 2 q^4|E||F|,$$
as desired.
\end{proof}

For two subsets $E,F$ of $M_2(\mathbb F_q)$,  we denote by $\Lambda(E,F)$ the additive energy defined by
$$\Lambda(E,F):=|\{(x,y,z,w)\in E\times F\times E\times F: x+y=z+w\}|.$$
The following proposition, whose proof will be given at the end of this section, plays a key role in the proof of Theorem \ref{mainthm2} below. 
\begin{proposition}\label{Proposition_Lambda}
 Assume that $E\subseteq D_i$ and $F\subseteq D_j$ for $i,j\ne 0.$ Then we have $$ \Lambda(E,F) \ll q^{-1}|E|^2|F| + q|E||F| + q |E|^{3/2}|F|^{1/2}.$$
\end{proposition}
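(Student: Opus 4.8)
The plan is to reduce the estimation of $\Lambda(E,F)$ to a single weighted incidence count and then, after extracting its main term, to recognise that count as an average of the quantities $W_\ell(E,F)$ already controlled in Corollary \ref{easyCor} and Lemma \ref{odotzero}. Concretely, write $\Lambda(E,F)=\sum_{u\in M_2(\mathbb F_q)}P(u)^2$, where $P(u):=|\{(x,y)\in E\times F: x+y=u\}|$. The key algebraic input is Lemma \ref{Odotlem}: if $x+y=u$ with $x\in E\subseteq D_i$ and $y\in F\subseteq D_j$, then $j=\|u-x\|_*=\|u\|_*+\|x\|_*-u\odot x=\det u+i-u\odot x$, so \emph{every} representation of $u$ forces the linear condition $x\odot u=\det u+i-j$. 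Hence $P(u)\le |\{x'\in E: x'\odot u=\det u+i-j\}|$, and summing the inequality $P(u)^2\le P(u)\cdot|\{x'\in E:\ x'\odot u=\det u+i-j\}|$ over $u$ gives
\[\Lambda(E,F)\le T:=\big|\{(x,x',y)\in E\times E\times F:\ x'\odot(x+y)=2i+x\odot y\}\big|,\]
where I used $\det(x+y)=i+j+x\odot y$. Invoking $\|x'-x\|_*=2i-x\odot x'$ once more, the defining relation of $T$ simplifies to the symmetric form $(x'-x)\odot y=\|x'-x\|_*$.

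Next I extract the main term. Writing the indicator of $(x'-x)\odot y=\|x'-x\|_*$ through the orthogonality of $\chi$, namely as $q^{-1}\sum_{s\in\mathbb F_q}\chi\big(s[(x'-x)\odot y-\|x'-x\|_*]\big)$, the term $s=0$ contributes $q^{-1}|E|^2|F|$ (the first term of the asserted bound), while the diagonal $x=x'$ contributes exactly $|E||F|\le q|E||F|$ (the second term). What remains is the error
\[\mathcal E:=q^{-1}\sum_{s\ne 0}\ \sum_{\substack{x,x'\in E,\,y\in F}}\chi\big(s[x\odot x'+(x'-x)\odot y]\big)\,\chi(-2is),\]
and it suffices to prove that $\mathcal E\ll q|E||F|+q|E|^{3/2}|F|^{1/2}$.

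The estimation of $\mathcal E$ is the technical heart, and is carried out in the spirit of Proposition \ref{MainThm}. Summing first over $y\in F$ produces $\overline{\widetilde F(s(x'-x))}$, and since $\chi(s\,x\odot x')\chi(-2is)=\chi(-s\|x'-x\|_*)$ by Lemma \ref{Odotlem}, one is left with $\mathcal E=q^{-1}\sum_{s\ne0}\sum_{x,x'\in E}\chi(-s\|x'-x\|_*)\,\overline{\widetilde F(s(x'-x))}$. The crucial point is that one must \emph{not} apply the Cauchy--Schwarz inequality in the variable $s$: doing so discards the cancellation among the $q-1$ values of $s$ and loses a factor of $q$ relative to the target. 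Instead, I would expand $\widetilde F(s(x'-x))=\sum_{y\in F}\chi(-s(x'-x)\odot y)$ and carry out the summation over the entries of $x,x'$ first, so that the remaining $s$-summation becomes a Gauss sum (via Lemma \ref{Comsquare} and the formula \eqref{Gauss}) or a Kloosterman sum of the type estimated in Section \ref{sec2N}. After this analytic step the surviving matrix sums reassemble precisely into the counting functions $W_\ell(E,F)$ and $W_0(E,F)$; bounding these by Corollary \ref{easyCor}, that is $\max_\ell W_\ell(E,F)\ll |E||F|/q+q|E|^{1/2}|F|^{1/2}$, and by Lemma \ref{odotzero}, and noting that each such count is weighted by at most $|E|$ choices of the remaining $E$-variable, should yield $\mathcal E\ll |E|\cdot\max_\ell W_\ell(E,F)+q|E||F|$. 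Since $|E|\cdot\max_\ell W_\ell(E,F)\ll q^{-1}|E|^2|F|+q|E|^{3/2}|F|^{1/2}$, this is exactly the claimed bound, and in fact the whole proposition amounts to the clean inequality $\Lambda(E,F)\ll |E|\cdot\max_\ell W_\ell(E,F)+q|E||F|$.

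The main obstacle is this error estimate: the sum $\mathcal E$ couples the three matrices $x,x',y$ through two distinct Odot-products together with the determinant weight $\|x'-x\|_*$, so the reduction to the one-variable counts $W_\ell$ cannot be achieved by a single Cauchy--Schwarz step. The delicate part is to organise the summation so that the $s$-variable, and the auxiliary variable introduced when completing the square via Lemma \ref{Comsquare}, are summed analytically \emph{before} the subsets $E,F$ are touched; this is the same mechanism that powers Proposition \ref{MainThm}. I expect the bookkeeping — in particular isolating the degenerate contributions where the relevant quadratic in the summation variable degenerates, exactly as in the treatment of $\mathcal C(\ell)$ in Lemma \ref{Lemma-B(22)} — to be the most laborious step.
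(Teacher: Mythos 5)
Your reduction is correct and in fact coincides with the paper's opening step: your condition $(x'-x)\odot y=\|x'-x\|_*$ is equivalent to $\det(x+y-x')=j$, so your quantity $T$ is exactly the count $\sum_{x,z\in E,\,y\in F:\,\det(x+y-z)=j}1$ that the paper's proof starts from, and your extraction of the main terms $q^{-1}|E|^2|F|$ and $|E||F|$ is sound. But from that point on the proposal stops being a proof: the entire content of the proposition is the bound $\mathcal E\ll q|E||F|+q|E|^{3/2}|F|^{1/2}$, and for it you offer only a plan (``should yield'', ``I expect''), not an argument. Worse, the plan as stated cannot be executed: you propose to carry out ``the summation over the entries of $x,x'$ first'' so that the $s$-sum becomes a Gauss or Kloosterman sum, but $x$ and $x'$ range over the \emph{arbitrary} set $E\subseteq D_i$, not over a complete variety, so their entries cannot be summed analytically. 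In Proposition \ref{MainThm} this obstacle is overcome by a Cauchy--Schwarz step that replaces a sum over $E$ by a sum over all of $D_i$; here the analogous completion step is precisely where the difficulty sits (three set variables $x,x',y$ coupled through two Odot-products and a determinant weight), and you give no indication of how to perform it without losing a factor of $q$. Likewise, the assertion that the surviving sums ``reassemble precisely into $W_\ell(E,F)$ and $W_0(E,F)$'' is unsubstantiated; indeed the ``clean inequality'' $\Lambda(E,F)\ll |E|\max_\ell W_\ell(E,F)+q|E||F|$ that you say the proposition amounts to is, given Corollary \ref{easyCor}, essentially a restatement of the proposition itself, so announcing it is not progress toward proving it.

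For comparison, the paper fills this gap with two ideas absent from your sketch. First, it splits the triple count according to whether one of $\det(x+y)$, $\det(x-z)$ vanishes (this degenerate part is bounded by Lemma \ref{secondlem}, i.e.\ ultimately by Proposition \ref{MainThm}, applied to the pairs $(E,F)$ and $(E,-E)$) or both are nonzero. Second --- and this is the key step --- for the non-degenerate part it does not work with $E$ and $F$ at all, but with the cones $(-E)_x$ and $F_x$ built from the translates $-E+x$ and $F+x$: a punctured line $[\alpha]^*$ meets the translated variety $D_i+x$ in at most two points, because membership reduces to the quadratic equation \eqref{quadratic equation} in the dilation parameter $s$. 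By Lemma \ref{Cardinality bound} this makes the count over $(-E+x)\times(F+x)$ comparable, up to a factor $q^{-2}$, to $W_0\bigl((-E)_x,F_x\bigr)$, and Lemma \ref{odotzero} applied to these cone sets of size $\sim q|E|$ and $\sim q|F|$ yields, after summing over $x\in E$, exactly the term $q|E|^{3/2}|F|^{1/2}$. This dilation-invariance trick is what your outline lacks, and nothing in it supplies a substitute.
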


The following is a main result of ours for the sum of two sets, whose proof heavily depends on Proposition \ref{Proposition_Lambda}
\begin{theorem}\label{mainthm2} Assume that $E\subseteq D_i$ and $F\subseteq D_j$ for $i,j\ne 0.$ Then we have
$$ |E+F|\gg 
\min\left\{ q|F|,~ \frac{|E||F|}{q},~ \frac{|E|^{1/2}|F|^{3/2}}{q}\right\}.$$
\end{theorem}
\begin{proof}
From the Cauchy-Schwarz inequality, it follows that
\begin{equation}\label{formula}|E+F|\ge \frac{|E|^2|F|^2}{\Lambda(E,F)},\end{equation}
By Proposition \ref{Proposition_Lambda}, we have 
$$ |E+F|\gg \frac{|E|^2|F|^2}{q^{-1}|E|^2|F|+q|E||F|+q|E|^{3/2}|F|^{1/2}}.$$
Then from this inequality, the proposition is immediate.

\end{proof}
\bigskip
In fact, in Theorem \ref{mainthm2}, if we know which one of $E$ and $F$ is larger than the other, then we can give a simpler statement.
\begin{corollary}\label{core} For $i,j\in \mathbb F_q^*,$ let $E\subseteq D_i$ and $F\subseteq D_j.$  Suppose, say,  $|F|\ge |E|$.
Then, we have 
$$ |E+F| \gg \min\left\{ q|F|,~ \frac{|E||F|}{q}\right\}.$$
\end{corollary}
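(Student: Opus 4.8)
The plan is to deduce the corollary directly from Theorem \ref{mainthm2}. Since $E\subseteq D_i$ and $F\subseteq D_j$ with $i,j\in\mathbb F_q^*$, the hypotheses of Theorem \ref{mainthm2} are met verbatim, so we already have the three-term lower bound
$$|E+F| \gg \min\left\{ q|F|,~ \frac{|E||F|}{q},~ \frac{|E|^{1/2}|F|^{3/2}}{q}\right\}.$$
Because dropping a term can only increase a minimum, the only content left is to show that, under the extra hypothesis $|F|\ge|E|$, the third term is redundant, i.e. that it is bounded below by one of the other two terms. If this holds, the three-term minimum collapses to $\min\{q|F|,\,|E||F|/q\}$, which is precisely the claimed estimate.

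First I would compare the third term against the second. Their quotient is
$$\frac{|E|^{1/2}|F|^{3/2}/q}{|E||F|/q} = \left(\frac{|F|}{|E|}\right)^{1/2},$$
which is at least $1$ exactly because $|F|\ge|E|$. Hence $\dfrac{|E|^{1/2}|F|^{3/2}}{q}\ge\dfrac{|E||F|}{q}$, and in particular the third term is no smaller than $\min\{q|F|,\,|E||F|/q\}$. Substituting this observation into the three-term minimum yields
$$\min\left\{ q|F|,~ \frac{|E||F|}{q},~ \frac{|E|^{1/2}|F|^{3/2}}{q}\right\} \;=\; \min\left\{ q|F|,~ \frac{|E||F|}{q}\right\},$$
and combining with Theorem \ref{mainthm2} finishes the argument.

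I do not anticipate any genuine obstacle: once the size assumption $|F|\ge|E|$ is used to order the last two quantities, the reduction is a one-line monotonicity remark about minima. The only point demanding care is the direction of the comparison — one must confirm that the discarded quantity is the \emph{largest}, not the smallest, of the three candidates, and this is guaranteed precisely by $|F|\ge|E|$. (Symmetrically, had $|E|\ge|F|$ been assumed instead, the same reasoning would eliminate the term $|E|^{1/2}|F|^{3/2}/q$ in favor of $|E||F|/q$, so the hypothesis merely fixes which of $E,F$ plays the role of the larger set.)
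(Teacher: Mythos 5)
Your proposal is correct and is essentially identical to the paper's own proof: both deduce the corollary from Theorem \ref{mainthm2} by observing that $|F|\ge|E|$ forces $\frac{|E||F|}{q}\le \frac{|E|^{1/2}|F|^{3/2}}{q}$, so the third term in the minimum is redundant. Your write-up is just a more detailed version of the paper's one-line argument.
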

\begin{proof}
%From Theorem \ref{mainthm2}, it follows that
%$$ |E+F|\gg \frac{|E|^2|F|^2}{q^{-1}|E|^2|F|+q|E||F|+q|E|^{3/2}|F|^{1/2}}.$$
Since $|F|\ge |E|$, we see that $|E||F|/q\le |E|^{1/2}|F|^{3/2}/q.$ Hence, the corollary follows immediately from Theorem \ref{mainthm2}.
%which implies that
%$$|E+F|\gg \frac{|E|^2|F|^2}{q^{-1}|E|^2|F|+q|E||F|}.$$ Then  the corollary is immediate. 
\end{proof}
\subsection{Proof of  Proposition \ref{Proposition_Lambda}}
Here we give a proof of Proposition \ref{Proposition_Lambda}. We begin by giving a simple lemma.
\begin{lemma}\label{Cardinality bound}
Let $X$ be a finite set, and  $X=\bigcup_{k=1}^m X_k$ be a partition on $X$ with $a:=|X_k|=|X_\ell|$ for all $k,\ell.$  If $Y$ is a subset of $X$ such that $Y\cap X_k\ne \emptyset$ for any $k=1,...,m$, then the cardinality of $Y$ is bounded by  $$\frac{|X|}{a}\leq |Y|\leq \frac{b |X|}{a},$$ where $b:=\max_{1\leq k\leq m}\{|Y\cap X_k|\}.$
\end{lemma}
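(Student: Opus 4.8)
The final statement to prove is Lemma~\ref{Cardinality bound}, which is a purely set-theoretic counting fact about a partition $X=\bigcup_{k=1}^m X_k$ into equal-sized blocks of size $a$, together with a subset $Y\subseteq X$ meeting every block.

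\medskip

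The plan is to establish the two inequalities separately by a straightforward block-counting argument. First I would decompose $Y$ along the partition by writing $Y=\bigcup_{k=1}^m (Y\cap X_k)$, a disjoint union since the $X_k$ are disjoint. Summing cardinalities gives the exact identity $|Y|=\sum_{k=1}^m |Y\cap X_k|$, and this single identity is really all that is needed; both bounds will fall out of estimating the summands from below and from above.

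\medskip

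For the lower bound, the hypothesis that $Y$ meets every block means $|Y\cap X_k|\ge 1$ for each $k$, so the sum over the $m$ blocks is at least $m$. Since every block has the same size $a$, counting $X$ blockwise gives $|X|=ma$, hence $m=|X|/a$, and therefore $|Y|\ge m=|X|/a$. For the upper bound, I would instead bound each summand above by the maximum $b:=\max_{1\le k\le m}|Y\cap X_k|$, giving $|Y|=\sum_{k=1}^m |Y\cap X_k|\le mb = b|X|/a$. Combining the two yields
\[
\frac{|X|}{a}\le |Y|\le \frac{b|X|}{a},
\]
which is exactly the claimed chain of inequalities.

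\medskip

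I do not anticipate any genuine obstacle here: the statement is elementary and follows entirely from the disjointness of the partition blocks, the equal-size hypothesis $|X_k|=a$ (used to convert the block count $m$ into $|X|/a$), and the two hypotheses on $Y$ (nonempty intersection with each block for the lower bound, the definition of $b$ for the upper bound). The only point to state carefully is that $ma=|X|$, so that $m$ and $|X|/a$ may be used interchangeably; everything else is a one-line summation estimate.
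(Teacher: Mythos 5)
Your proof is correct and follows exactly the same route as the paper's: partition $Y$ into the blocks $Y\cap X_k$, use $|Y|=\sum_{k=1}^m|Y\cap X_k|$ with $1\le |Y\cap X_k|\le b$, and convert $m$ to $|X|/a$ via $|X|=ma$. No differences worth noting.
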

\begin{proof}
Notice that $m=\frac{|X|}{a}$ is the number of members of the partition.
Since $Y\cap X_k\ne \emptyset$ for any $k=1,...,m$, we have $ m\le |Y|.$ Since $|Y\cap X_k|\leq b$ for all $k=1,...,m$ and $|Y|=\sum_{1\leq k\leq m}|Y\cap X_k|$, we have $|Y|\le mb.$ This proves the lemma.
\end{proof}

Lemma \ref{Cardinality bound} is useful when we want to obtain a bound on the cardinality of a set $Y$ in question. It is enough to find a lager set $X$ allowing an embedding $Y\hookrightarrow X$ of sets satisfying the conditions in the lemma. Indeed, we will use this lemma at the last moment to complete the proof of Proposition \ref{Proposition_Lambda} below.  
\begin{proof}[Proof of Proposition \ref{Proposition_Lambda}]
Since $F\subseteq D_j,$  we can write \begin{equation} \label{Bound_Lambda}\Lambda(E,F)\le \sum_{x,z\in E, y\in F: \det(x+y-z)=j} 1=\sum_{x,z\in E, y\in F: (x+y)\odot (x-z)=0} 1.\end{equation}
Here the equality in \eqref{Bound_Lambda} follows from the equivalence of two conditions: for $x,z\in E\subseteq D_i, y\in F\subseteq D_j,$ 
 $$\det(x+y-z)=j \Leftrightarrow (x+y)\odot (x-z)=0.$$ 
To make the computation easy, we split the RHS of (\ref {Bound_Lambda})  into two summands:
$$\sum_{x,z\in E, y\in F: (x+y)\odot (x-z)=0} 1=I+II,$$
where $I$ denotes the sum over $x,y,z$ with  $\det(x+y)=0$ or $\det(x-z)=0$, 
and  $II$ the sum over $x,y,z$ with  $\det(x+y)\ne 0$ and $\det(x-z)\ne 0.$
Let us bound $I$ and $II$ separately.\\

For $I,$ the following is obvious.
$$ I \le \sum_{x,z\in E, y\in F: \det(x+y)=0} 1 + \sum_{x,z\in E, y\in F: \det(x-z)=0} 1$$ 
\begin{equation}\label{Bound_Lambda_3} =|E| \sum_{x\in E, y\in F: \det(x+y)=0} 1 + |F| \sum_{x,z\in E: \det(x-z)=0}1.\end{equation}
Lemma \ref{secondlem} directly gives a bound on the first sum in (\ref{Bound_Lambda_3}). To bound the second sum,  notice that since $E$ is a subset of $D_i$,  $-E$  is also contained in $D_i$ and $ |E|=|-E|.$ Thus Lemma \ref{secondlem} is also applicable to the second sum.
Therefore, we have obtained
$$ I\ll |E|( q^{-1}|E||F| + q|E|^{1/2} |F|^{1/2} ) + |F| ( q^{-1}|E|^2+q|E|)$$
$$\ll q^{-1}|E|^2|F| + q|E||F| + q|E|^{3/2}|F|^{1/2}.$$

Next, we bound  $II$. Recall that
$$ II=\sum_{\substack{x,z\in E, y\in F: (x+y)\odot (x-z)=0\\
                             \det(x+y)\ne 0 \ne \det(x-z)}} 1
=\sum_{x\in E} \left[\sum_{\substack{z\in E, y\in -F: (x-y)\odot (x-z)=0\\ \det(x-y)\ne 0 \ne \det(x-z)}} 1\right].$$
Fix $x\in E,$ and let $\beta=x-y$ and $\alpha=x-z.$ Then we see that
\begin{equation}\label{Bound_Lambda_4} II= \sum_{x\in E} \left[ \sum_{\substack{\alpha\in (-E+x), \beta\in (F+x):\\ \alpha\odot \beta=0, \det(\alpha)\ne 0 \ne\det(\beta)}} 1\right],\end{equation}
where $-E+x:=\{-e+x: e\in E\}$ and $F+x:=\{f+x: f\in F\}.$
Let  $II(x)$  be the sum in the bracket in (\ref{Bound_Lambda_4}); namely, 
$$ II(x):=\sum_{\substack{\alpha\in (-E+x), \beta\in (F+x):\\ \alpha\odot \beta=0, \det(\alpha)\ne 0 \ne\det(\beta)}}1.$$
Now for each $x\in E$ we bound  $II(x).$

For a nonzero vector  $\gamma \in M_2(\mathbb F_q)$,  let $[\gamma]$ be the one dimensional subspace ( i.e., the  line) in $M_2(\mathbb F_q)$ generated by $\gamma$ and $[\gamma]^*:=[\gamma]\setminus{\{\mathbf{0}\}}$.
For $H \subseteq  M_2(\mathbb F_q)$, let $${H}_x:=\{s\alpha: s\in \mathbb F_q^*, \alpha \in H+x, \det(\alpha)\ne 0\}.$$ In other words, ${H}_x$ is the union of all  $[\gamma]^*$ with $\gamma \in H+x, \det(\gamma)\ne 0. $
Notice that  for  $(\alpha, \beta)\in (-E+x)\times (F+x)$,   $\alpha \odot \beta =0$ iff  $(s\alpha)\odot (t\beta)=0$ for all $s,t\in \mathbb F_q^*$, and for any $\gamma\in \MFq$, $\det(\gamma)\ne0$ iff $\det (s\gamma)\ne 0$ for any nonzero $s\in \mathbb F_q.$
We claim that for every $x\in E,$ we have
\begin{equation}\label{claimMM} II(x)=\sum_{\substack{\alpha\in (-E+x), \beta\in (F+x):\\ \alpha\odot \beta=0, \det(\alpha)\ne 0 \ne\det(\beta)}}1 \sim q^{-2} \sum_{u\in (-E)_x, v\in {F}_x: u\odot v=0} 1.\end{equation}

To prove the claim, we use Lemma \ref{Cardinality bound}. 
Let $Y$ be the index set of the first summation in (\ref{claimMM}) which we want to count, and $X$ the index set of the second summation, i.e., 
$$X=\bigcup_{(\alpha,\beta)\in Y}[\alpha]^*\times[\beta]^*,$$
where we take the ordinary (not necessarily disjoint) union of sets. 
Obviously we have a natural embedding $Y\hookrightarrow X$, $(\alpha,\beta)\mapsto (\alpha,\beta).$ For the remaining conditions in Lemma \ref{Cardinality bound}, it is enough to show that  for any $(\alpha, \beta)\in Y,$ $$1\leq Y\cap ([\alpha]^*\times [\beta]^*)\leq 4.$$
The inequality $1\leq Y\cap ([\alpha]^*\times [\beta]^*)$ is trivially true. For the other inequality, it is enough to show two inequalities
 \begin{equation}\label{claimm} |[\alpha]^*\cap (-E+x)|\le 2, \ \ \ |[\beta]^*\cap (F+x)|\le 2.\end{equation} 
We only prove the first one in \eqref{claimm} (in fact, the proof below works for the second inequality.)
First, notice that $-E\subseteq D_i$ since $E\subseteq D_i,$ and so   $-E+x\subseteq D_i+x.$
Since $|[\alpha]^*\cap (-E+x)|\le |[\alpha]^*\cap (D_i+x)|$, it suffices to show 
$$|[\alpha]^*\cap (D_i+x)|\le 2.$$

Note that for a  (fixed)  $ x=\left(\begin{matrix}
x_1&x_2\\
x_3&x_4
\end{matrix}\right)\in E,$ the variety $D_i+x$ is defined by the equation $$(z_1-x_1)(z_4-x_4)-(z_2-x_2)(z_3-x_3)=i,$$ where $z=\left(\begin{matrix}
z_1&z_2\\
z_3&z_4
\end{matrix}\right) \in M_2(\mathbb F_q).$

Therefore, for  $\alpha=\left(\begin{matrix}
\alpha_1&\alpha_2\\
\alpha_3&\alpha_4
\end{matrix}\right)$,  an element $s\alpha \in [\alpha]^* $ for $s\in \mathbb F_q^*$ lies in the variety $[\alpha]^*\cap (D_i+x)$ if and only if $s$ satisfies
$$ (s\alpha_1-x_1)(s\alpha_4-x_4)-(s\alpha_2-x_2)(s\alpha_3-x_3)=i;$$ equivalently,  
\begin{equation}\label{quadratic equation} (\alpha_1\alpha_4-\alpha_2\alpha_3)s^2+(\alpha_2x_3+\alpha_3x_2-\alpha_1x_4-\alpha_4x_1)s +x_1x_4-x_2x_3-i=0.\end{equation} In other words,
 the number $|[\alpha]^*\cap (D_i+x)|$ is  equal to the number of solutions to this  equation (\ref{quadratic equation}) for $s$. Since $\det(\alpha)\ne 0,$ this equation is quadratic, and so it has at most two solutions. Thus we have $$|[\alpha]^*\cap (D_i+x)|\leq 2,$$ as desired.  Hence, the inequality \eqref{claimm} holds. Note that the number of the above partitions on $X$ is equal to $\frac{X}{(q-1)^2}$. From Lemma \ref{Cardinality bound}, it follows that $$\frac{|X|}{(q-1)^2}\leq II(x)\leq 4\frac{|X|}{(q-1)^2}.$$ This proves the claim \eqref{claimMM}.
\smallskip

Now we are ready to bound $II(x)$ in \eqref{claimMM}. It is clear that 
$$ |(-E)_x|\sim q|(-E+x)|,~|{F}_x|\sim q|(F+x)| \quad \mbox{for all}~x\in E.$$  
Applying the inequality \eqref{zeroK} in Lemma \ref{odotzero}, we see that for every $x\in E,$
$$II(x) \ll q^{-2} \left( \frac{|(-E)_x||{F}_x|}{q} + q^2 |(-E)_x|^{1/2}|{F}_x|^{1/2}\right)$$
$$\sim \frac{|E||F|}{q} + q |E|^{1/2}|F|^{1/2}.$$
Summing over $x\in E,$ 
$$ II\ll \frac{|E|^2|F|}{q} + q |E|^{3/2}|F|^{1/2}.$$
To conclude,
$$ \Lambda(E,F) \le I +II \ll (q^{-1}|E|^2|F| + q|E||F| + q|E|^{3/2}|F|^{1/2})+(q^{-1}|E|^2|F| + q |E|^{3/2}|F|^{1/2})$$
$$\ll q^{-1}|E|^2|F| + q|E||F| + q |E|^{3/2}|F|^{1/2},$$ as desired.
\end{proof}

\bigskip
\section{Determinants of finitely iterated sum sets (Proof of Theorem \ref{thm0})}\label{sec6N}
%%%%%%%%%%%%%%%%%%%%%%%%%%%%%%%%%%%%%%%%%%%%%%%%%%%%%%%%%%%%%%%%%%%%
As we will see, the proof of Theorem \ref{thm0} uses some other results  as well as Corollary \ref{core}. We will list them below.
The following result was given by Li and Su \cite{li} by using Fourier techniques. A graph theoretic proof was recently given by Demirogly Karabulut \cite{Yesim}. For the sake of completeness, we will include a short proof in Appendix. 

\begin{proposition}[\cite{li, Yesim}]\label{thm3} Let $E, F \subseteq M_2(\mathbb F_q).$ If $|E||F|> 4 q^5,$ then 
we have 
$$\det(E+F) \supseteq \mathbb F_q^*.$$
\end{proposition}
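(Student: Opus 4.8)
The plan is to count, via the discrete Fourier analysis developed in Section \ref{sec2N}, the number of pairs $(x,y)\in E\times F$ with $\det(x+y)=t$ for a fixed $t\in\mathbb F_q^*$, and to show this count is strictly positive under the hypothesis $|E||F|>4q^5$. Write $N_t(E,F)=\sum_{x\in E,\,y\in F}\mathbf 1[\det(x+y)=t]$. Using the orthogonality of $\chi$ to detect the condition $\|x+y\|_*=t$, I would expand
\begin{equation*}
N_t(E,F)=q^{-1}\sum_{x\in E,\,y\in F}\sum_{r\in\mathbb F_q}\chi\bigl(r(\|x+y\|_*-t)\bigr)
=\frac{|E||F|}{q}+q^{-1}\sum_{r\neq 0}\chi(-rt)\sum_{x\in E,\,y\in F}\chi\bigl(r\,\|x+y\|_*\bigr).
\end{equation*}
The first term is the expected main term $|E||F|/q$; the task is to bound the remaining sum over $r\neq 0$, which I will call the error term $\mathcal E$.

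First I would use the identity $\|x+y\|_*=\|x\|_*+\|y\|_*+x\odot y$ from Lemma \ref{Odotlem} to split off the determinant contributions, leaving the genuinely bilinear piece $\chi(r\,x\odot y)$. Then I would apply Cauchy--Schwarz in the variable $x$ (after extending the sum to all of $M_2(\mathbb F_q)$, which only increases it), so that
\begin{equation*}
|\mathcal E|^2\ll q^{-2}|E|\sum_{x\in M_2(\mathbb F_q)}\Bigl|\sum_{y\in F,\,r\neq 0}\chi(r\,x\odot y)\chi(\cdots)\Bigr|^2.
\end{equation*}
Expanding the square introduces a second copy $y'$ and a second parameter $r'$, and the sum over $x\in M_2(\mathbb F_q)$ can then be evaluated by the Odot-orthogonality relation \eqref{odotorth}, which forces $ry=r'y'$ (up to the determinant-twisted weights). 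This is exactly the mechanism used in the proof of Lemma \ref{odotzero} and in Proposition \ref{MainThm}: after the orthogonality collapse, one is left counting collinear pairs $(y,y')$ together with Gauss-sum factors coming from the quadratic terms in $r$. Invoking the explicit Gauss sum value $|G_1|=q^{1/2}$ and the estimate $\eta(-1)G_1^2=q$ from \eqref{Gauss-1}, together with the Kloosterman-sum bound $2q^{1/2}$ quoted in Section \ref{sec2N}, I expect the error term to satisfy a bound of the shape $|\mathcal E|\le 2q^{5/2}|E|^{1/2}|F|^{1/2}$.

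Granting that bound, the proof concludes by comparing main and error terms: $N_t(E,F)>0$ as soon as $|E||F|/q>2q^{5/2}|E|^{1/2}|F|^{1/2}$, i.e. $|E|^{1/2}|F|^{1/2}>2q^{5/2}$, which is precisely the hypothesis $|E||F|>4q^5$. Since $t\in\mathbb F_q^*$ was arbitrary, this yields $\det(E+F)\supseteq\mathbb F_q^*$. I would note that the restriction to $t\neq 0$ is what allows the clean bound: when $t=0$ the diagonal contributions from degenerate (rank-deficient) pairs can dominate, as already seen in the analysis around Lemma \ref{odotzero}, so the nonzero determinant case is genuinely the favorable one here.

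The main obstacle I anticipate is the careful bookkeeping of the Gauss-sum weights after the Cauchy--Schwarz and orthogonality steps: because the quadratic character $\eta$ and the factor $G_1$ enter through the sums over $r$ and the squared variable, one must track when these combine into a clean Kloosterman sum versus when they produce an extra $\eta(-1)G_1^2=q$ factor, and ensure no hidden main term of size comparable to $|E||F|/q$ survives in $\mathcal E$. Keeping the constant sharp enough to land exactly on $4q^5$ (rather than a larger constant) will require handling the $r=r'$ versus $r\neq r'$ cases and the $y=y'$ versus $y\neq y'$ cases separately, much as in the case analysis of Lemma \ref{secondlem}; this is routine but is where an over-generous estimate would spoil the explicit constant.
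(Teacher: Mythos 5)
Your closing arithmetic already signals a problem: from the bound you posit, $|\mathcal E|\le 2q^{5/2}|E|^{1/2}|F|^{1/2}$, the comparison $|E||F|/q>2q^{5/2}|E|^{1/2}|F|^{1/2}$ gives $|E|^{1/2}|F|^{1/2}>2q^{7/2}$, i.e.\ $|E||F|>4q^7$, not $4q^5$. To land on the stated threshold you need the stronger estimate $|\mathcal E|\le 2q^{3/2}|E|^{1/2}|F|^{1/2}$, and this is where the substantive gap lies: the one-sided Cauchy--Schwarz-in-$x$ route you propose cannot deliver it for \emph{arbitrary} $E,F$. Since $\|x\|_*$ is not constant on $E$ here, after expanding the square the $x$-sum carries the quadratic twist $\chi\bigl((r-r')\|x\|_*\bigr)$; for $r\ne r'$ it does not collapse via \eqref{odotorth} to the condition $ry=r'y'$, but instead evaluates (by orthogonality in the pairs $(x_1,x_4)$ and $(x_2,x_3)$) to $q^2\,\chi\bigl(-\tfrac{rr'}{r-r'}\|y-y'\|_*\bigr)$. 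Carrying out the Gauss-sum evaluation in $(r,r')$, each pair $(y,y')$ with $\|y-y'\|_*\ne 0$ contributes magnitude $\sim q$ with a sign $\eta$-dependent on $\det(y-y')$, and for arbitrary $F$ there is no further cancellation over pairs; the mechanisms you invoke from Proposition \ref{MainThm} and Lemma \ref{Lemma-A(l)} to tame exactly such terms (e.g.\ the step ``$y'=by$ forces $b=-1$'') use $F\subseteq D_j$ with $j\ne 0$, which is unavailable in this proposition. The upshot is $|\mathcal E|^2\lesssim q^3|E||F|+q|E||F|^2$, so your scheme proves $N_t>0$ only under the extra hypothesis $\max(|E|,|F|)\gg q^3$, and it fails precisely in the balanced regime $|E|\sim|F|\sim 2q^{5/2}$ that the statement covers.

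The paper's actual proof avoids the asymmetric Cauchy--Schwarz altogether. It first computes the ordinary Fourier transform $\widehat{D_t}(m)=\frac{\delta_0(m)}{q}+q^{-3}\sum_{s\ne 0}\chi\bigl(-st-\frac{\|m\|_*}{s}\bigr)$, then writes $N_t(E,F)=q^8\sum_m \widehat{D_t}(m)\,\overline{\widehat E}(m)\,\overline{\widehat F}(m)$ by Fourier inversion, bounds the $s$-sum by the Kloosterman estimate $2q^{1/2}$ --- valid for every $m$, including $\|m\|_*=0$, once $t\ne 0$ --- and finally applies Cauchy--Schwarz over $m$ together with Plancherel \emph{symmetrically in both} $E$ and $F$, using $\sum_m|\widehat E(m)|^2=|E|/q^4$ and likewise for $F$. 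That symmetric treatment is what produces $|R(t)|\le 2q^{3/2}|E|^{1/2}|F|^{1/2}$ and hence the clean threshold $|E||F|>4q^5$; keeping one set inside the square, as you do, structurally loses this and cannot be repaired by more careful bookkeeping of the Gauss-sum weights.
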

The following result is an immediate consequence from Corollary \ref{core} for the balance case.
\begin{lemma}\label{corecol} For $i\in \mathbb F_q^*,$ let $E\subseteq D_i.$ Then we have
$$ |E+E| \gg \min\left\{ q|E|,~ \frac{|E|^2}{q}\right\}.$$
\end{lemma}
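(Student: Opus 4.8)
The plan is to derive Lemma~\ref{corecol} directly from Corollary~\ref{core} by specializing to the case $F=E$, where the two sets sit in the same variety $D_i$. First I would apply Corollary~\ref{core} with $F=E$ (so $i=j$ and the hypothesis $|F|\ge|E|$ holds trivially as an equality). This immediately yields
$$ |E+E|\gg \min\left\{ q|E|,~ \frac{|E|^2}{q}\right\},$$
which is exactly the desired conclusion. So at the level of the statement, the lemma is nothing more than the balanced instance of the corollary, and the proof is a one-line invocation.

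To make the exposition self-contained, I would also briefly recall \emph{why} the third term $|E|^{1/2}|F|^{3/2}/q$ appearing in Theorem~\ref{mainthm2} collapses in the balanced case. With $F=E$ we have $|E|^{1/2}|E|^{3/2}/q=|E|^2/q$, which coincides with the second term $|E||F|/q=|E|^2/q$; hence the minimum of the three quantities in Theorem~\ref{mainthm2} reduces to the minimum of the two quantities $q|E|$ and $|E|^2/q$. This is precisely the simplification already carried out inside the proof of Corollary~\ref{core}, where one observes that $|F|\ge|E|$ forces $|E||F|/q\le |E|^{1/2}|F|^{3/2}/q$, so the third term can be dropped from the minimum.

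There is essentially no obstacle here: the content has already been established in Theorem~\ref{mainthm2} and Corollary~\ref{core}, and the only thing to check is the bookkeeping that the three-term minimum degenerates to a two-term minimum when $E=F$. The proof would therefore read simply as follows: apply Corollary~\ref{core} with $F=E$, noting that the hypothesis $|F|\ge|E|$ is satisfied, to obtain directly that
$$ |E+E|\gg \min\left\{ q|E|,~ \frac{|E|^2}{q}\right\},$$
as claimed.
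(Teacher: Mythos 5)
Your proposal is correct and matches the paper exactly: the paper itself presents Lemma~\ref{corecol} as ``an immediate consequence from Corollary~\ref{core} for the balance case,'' i.e., setting $F=E$ (so $|F|\ge|E|$ holds trivially) in Corollary~\ref{core} gives $|E+E|\gg \min\left\{q|E|,\ |E|^2/q\right\}$ at once. Your additional remark that the third term $|E|^{1/2}|F|^{3/2}/q$ of Theorem~\ref{mainthm2} coincides with $|E||F|/q$ when $E=F$ is accurate and simply makes explicit the simplification already performed in the proof of Corollary~\ref{core}.
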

We note that Corollary \ref{core} only gives us the lower bound when $F$ is a set in $D_j$ for some $j\in \mathbb{F}_q^*$. To make the inductive argument in the proof of Theorem \ref{thm0} below work, we also need the following result from \cite{Ye} in the case when $F$ is an arbitrary set in $M_2(\mathbb{F}_q)$. We refer readers to \cite{Ye} for a detailed proof using spectrum of the unit-special Cayley graph.

\begin{lemma}[Proof of Corollary $1.7$, \cite{Ye}]\label{thm090}
For $i\in \mathbb{F}_q^*,$ let $E$ be a set in $D_i$, and $F$ be a set in $M_2(\mathbb{F}_q).$ Then we have 
\[|E+F|\gg \min \left\lbrace q|E|, ~\frac{|E|^2|F|}{q^3}\right\rbrace.\]
\end{lemma}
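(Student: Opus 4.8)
The plan is to deduce this bound from the spectral gap of the Cayley graph $G=\mathrm{Cay}(M_2(\mathbb F_q),D_i)$, whose vertex set is $\mathbb F_q^4$ and in which $x$ and $y$ are joined exactly when $x-y\in D_i$. Since $\det(-x)=\det(x)$ we have $D_i=-D_i$, so $G$ is a well-defined undirected, $d$-regular graph with $d=|D_i|=q^3-q$ on $n=q^4$ vertices. Its eigenvalues are the character sums $\lambda_m=\sum_{s\in D_i}\chi(m\cdot s)$ for $m\in\mathbb F_q^4$, with $\lambda_0=|D_i|\sim q^3$. To control the nontrivial eigenvalues I would pass from the ordinary dot product to the Odot-product: since $\odot$ is a nondegenerate symmetric form, the assignment $m\mapsto y$ determined by $m\cdot s=y\odot s$ for all $s$ is a linear bijection of $\mathbb F_q^4$ carrying $m=0$ to $y=0$, and under it $\lambda_m=\overline{\widetilde{D_i}(y)}$ because $\odot$ is symmetric and $\overline{\chi(-x\odot y)}=\chi(x\odot y)$. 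Lemma \ref{Focirl} together with the Kloosterman bound then gives $|\lambda_m|\le 2q^{3/2}$ for every $m\neq 0$.

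Next I would record the resulting expander mixing estimate: writing $e(S,T)$ for the number of ordered pairs $(s,t)\in S\times T$ with $s-t\in D_i$, a Fourier expansion of $e(S,T)$ in the eigenbasis of $G$, combined with Plancherel and the bound $|\lambda_m|\le 2q^{3/2}$, yields
$$\left|e(S,T)-\frac{|D_i|}{q^4}|S||T|\right|\le 2q^{3/2}\sqrt{|S||T|}$$
for all $S,T\subseteq M_2(\mathbb F_q)$. In particular $e(S,T)\ll q^{-1}|S||T|+q^{3/2}\sqrt{|S||T|}$.

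The crucial step is then to choose the right pair of sets. Taking $S=E+F$ and $T=F$, I claim $e(E+F,F)\ge|E||F|$: for each $(x,f)\in E\times F$ the element $w=x+f$ lies in $E+F$ and satisfies $w-f=x\in D_i$, so $(w,f)$ is an edge of $G$, and the map $(x,f)\mapsto(x+f,f)$ is injective because $f$ and $w-f$ recover $x$. Combining this lower bound with the expander mixing estimate gives
$$|E||F|\le e(E+F,F)\ll \frac{|E+F|\,|F|}{q}+q^{3/2}\sqrt{|E+F|\,|F|}.$$
Hence at least one of the two terms on the right dominates $|E||F|$. If the first dominates I obtain $|E+F|\gg q|E|$; if the second dominates I obtain $|E+F|\,|F|\gg |E|^2|F|^2/q^3$, i.e.\ $|E+F|\gg |E|^2|F|/q^3$. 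Together these give $|E+F|\gg\min\{q|E|,\,|E|^2|F|/q^3\}$, as claimed.

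The only genuinely delicate point is the eigenvalue bound $|\lambda_m|\le 2q^{3/2}$, and even this is already essentially contained in Lemma \ref{Focirl}; the remaining work is the bookkeeping of the change of variables relating $\lambda_m$ to $\widetilde{D_i}$ and the verification that the degenerate case $\|y\|_*=0$ (where the inner sum collapses to $-1$ and $|\lambda_m|=q$) still obeys the bound. The incidence identity $e(E+F,F)\ge|E||F|$ and the final two-case analysis are then routine, so I expect the main conceptual obstacle to be simply recognizing that $e(E+F,F)$, rather than the additive energy $\Lambda(E,F)$, is the quantity on which the $D_i$-spectrum acts efficiently: a direct energy bound via $\Lambda$ loses here because the Fourier coefficients of a general $E\subseteq D_i$ are not controlled pointwise, whereas the edge count absorbs the determinant constraint $x\in D_i$ exactly through the adjacency relation.
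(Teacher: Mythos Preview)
Your proposal is correct and follows essentially the same approach that the paper attributes to \cite{Ye}: the paper does not give its own proof of this lemma but explicitly refers to the spectral analysis of the unit-special Cayley graph, which is precisely your expander-mixing argument on $\mathrm{Cay}(M_2(\mathbb F_q),D_i)$ with the eigenvalue bound coming from Lemma~\ref{Focirl} and the Kloosterman estimate. The choice $S=E+F$, $T=F$ together with the injectivity $(x,f)\mapsto(x+f,f)$ is exactly the standard edge-counting step in that framework, so there is nothing to add.
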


It is worth noting that the bound in Corollary \ref{core} is stronger than that of Lemma \ref{thm090} whenever $|E|\le q^2$. Another key ingredient in proving Theorem \ref{thm0} is the following lemma whose proof is based on
an induction argument with  Lemma \ref{corecol} and Lemma \ref{thm090}.
\begin{lemma}\label{core1}
Let $k\ge 2$ be an integer and $i$ be an element in $\mathbb{F}_q^*$.  Let $E$ be a set in $D_i$ with $|E|\ge C q^{\frac{3}{2}}$ for a sufficiently large constant $C.$  We have \[|kE|\gg \min\left\lbrace  q|E|, ~\frac{|E|^{2k-2}}{q^{3k-5}}\right\rbrace.\]
\end{lemma}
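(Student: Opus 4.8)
The plan is to argue by induction on $k\ge 2$, using the two-fold sumset bounds already at our disposal. For the base case $k=2$, Lemma \ref{corecol} gives $|2E|\gg \min\{q|E|, |E|^2/q\}$; since $|E|\ge Cq^{3/2}$ forces $|E|^2/q\ge C^2 q^2 \gg |E|^{2\cdot 2-2}/q^{3\cdot 2-5}=|E|^2/q$, the claimed bound $|2E|\gg\min\{q|E|,|E|^2/q\}$ matches the asserted exponents exactly, so the base case holds. The inductive step is where the real work lies: assuming the bound for $kE$, I would write $(k+1)E=E+(kE)$ and apply Lemma \ref{thm090}, which is valid because $E\subseteq D_i$ while $kE$ is merely some set in $M_2(\mathbb F_q)$. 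This yields
\[
|(k+1)E|\gg \min\left\{q|E|,~\frac{|E|^2\,|kE|}{q^3}\right\}.
\]

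Next I would substitute the inductive hypothesis $|kE|\gg\min\{q|E|,~|E|^{2k-2}/q^{3k-5}\}$ into the second slot and simplify. The substitution splits into two subcases according to which term realizes the minimum for $|kE|$. If $|kE|\gg q|E|$ (i.e.\ $kE$ has already saturated the $q|E|$ threshold), then $|E|^2|kE|/q^3 \gg |E|^3/q^2$, and one checks this already exceeds $q|E|$ in the regime $|E|\ge Cq^{3/2}$, so the outer minimum is pinned at $q|E|$ and we are done for this branch. If instead $|kE|\gg |E|^{2k-2}/q^{3k-5}$, then
\[
\frac{|E|^2\,|kE|}{q^3}\gg \frac{|E|^2}{q^3}\cdot\frac{|E|^{2k-2}}{q^{3k-5}}=\frac{|E|^{2k}}{q^{3k-2}}=\frac{|E|^{2(k+1)-2}}{q^{3(k+1)-5}},
\]
which is exactly the exponent pattern demanded for $(k+1)E$; combining with the $q|E|$ term from the outer minimum closes the induction.

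The main obstacle I anticipate is bookkeeping the threshold condition $|E|\ge Cq^{3/2}$ consistently across the induction so that the stated two-term minimum genuinely captures the truth at every stage: one must verify that the crossover point between $q|E|$ and $|E|^{2k-2}/q^{3k-5}$ behaves monotonically in $k$, and in particular that once the bound reaches the ceiling $q|E|$ it stays there for all larger $k$ (which is why the minimum form is stable under the recursion). A secondary subtlety is that Lemma \ref{thm090} is the correct tool rather than Corollary \ref{core}: the latter requires \emph{both} summands to lie in some $D_j$, whereas $kE$ for $k\ge 2$ is no longer confined to a single determinant level set, so the asymmetric estimate of Lemma \ref{thm090}—with the $|E|^2|F|/q^3$ term reflecting that only $E$ enjoys the $D_i$ structure—is precisely what makes the iteration go through. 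I would keep the implied constants uniform in $k$ by noting there are only finitely many inductive steps for any fixed $k$, so the $\gg$ constant depends on $k$ but not on $q$, which suffices for the statement.
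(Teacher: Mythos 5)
Your proof is correct and follows essentially the same route as the paper: induction on $k$ with Lemma \ref{corecol} as the base case, then Lemma \ref{thm090} applied with $F$ equal to the previous iterated sumset, using $|E|\ge Cq^{3/2}$ to absorb the intermediate term $|E|^3/q^2$ into the minimum (the paper keeps the three-term minimum $\min\{q|E|,\,|E|^3/q^2,\,|E|^{2k-2}/q^{3k-5}\}$ and discards the middle term, which is equivalent to your two-subcase split). One trivial remark: your base-case justification comparing $|E|^2/q$ with itself is vacuous but harmless, since Lemma \ref{corecol} already yields exactly the claimed $k=2$ bound with no size hypothesis needed.
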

\begin{proof}
The proof proceeds by induction on $k$. 
Suppose $k=2.$ Then Lemma \ref{corecol} gives us 
\[|E+E|\gg \min \left\lbrace q|E|,~ \frac{|E|^2}{q}\right\rbrace.\]
Thus the base case follows.
Suppose that the theorem holds for any $k-1\ge 2$. We now show that it also holds for $k$. Indeed, by inductive hypothesis, we have 
\[|(k-1)E|\gg \min \left\lbrace q|E|, ~\frac{|E|^{2k-4}}{q^{3k-8}}\right\rbrace.\]

Applying Lemma \ref{thm090} with $F=(k-1)E$, we have 
\[|kE|\gg \min \left\lbrace q|E|, ~\frac{|E|^3}{q^2}, ~ \frac{|E|^{2k-2}}{q^{3k-5}}\right\rbrace\gg  \min \left\lbrace q|E|, ~ \frac{|E|^{2k-2}}{q^{3k-5}}\right\rbrace,\]
since $|E|\ge C q^{\frac{3}{2}}$. This concludes the proof of Lemma \ref{core1}. 
\end{proof}
%\bigskip
\paragraph{\textbf{Proof of Theorem \ref{thm0}}:}
From Lemma \ref{core1}, we see that one of the following cases happens. 

{\bf Case $1$:} If $|kE|\gg q|E|$, then by applying Proposition \ref{thm3} for the set $kE,$  we have 
\[\det(2kE)\supseteq \mathbb{F}_q^*,\]
whenever $|E|\ge C q^{\frac{3}{2}}$. 

{\bf Case $2$:} If $|kE|\gg \frac{|E|^{2k-2}}{q^{3k-5}}$, then we apply Proposition \ref{thm3} again to obtain
\[\det(2kE)\supseteq \mathbb{F}_q^*,\]
whenever $|E|\ge C q^{\frac{6k-5}{4k-4}}$.

This completes the proof of the theorem. $\hfill\square$

\section{Appendix}
In this appendix, we gives an alternative proof of  Proposition \ref{thm3}.
We begin by proving a preliminary lemma below.
 \begin{lemma} For $t\in \mathbb F_q, m\in  M_2(\mathbb F_q),$ we have
 \begin{equation}\label{eq2} \widehat{D_t}(m)=\frac{\delta_0(m)}{q} + \frac{1}{q^3} \sum_{s\ne 0} \chi\left( -st-\frac{\|m\|_*}{s}\right),\end{equation}
 where $\delta_0(m)=1$ if $m=\mathbf{0},$ and 0 otherwise.
  \end{lemma}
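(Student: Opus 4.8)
The plan is to expand the definition of the (ordinary) Fourier transform on $\mathbb F_q^4$ and linearize the determinant constraint by the orthogonality of $\chi$, exactly in the spirit of the proof of Lemma \ref{Focirl} (the difference being that here we use the usual dot-product rather than the Odot-product). Taking $d=4$, so that $\widehat{D_t}(m)=q^{-4}\sum_{x\in D_t}\chi(-m\cdot x)$, I would first detect the condition $\det(x)=t$ via the identity $1_{D_t}(x)=q^{-1}\sum_{s\in\mathbb F_q}\chi(s(\det(x)-t))$. Substituting and isolating the term $s=0$ produces $q^{-5}\sum_{x}\chi(-m\cdot x)=q^{-5}\cdot q^4\delta_0(m)=\delta_0(m)/q$, which accounts for the first summand. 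It then remains to evaluate the contribution of $s\neq 0$, namely $q^{-5}\sum_{s\neq 0}\chi(-st)\sum_{x}\chi(s\det(x)-m\cdot x)$.

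For each fixed $s\neq 0$, the key step is to factor the inner sum over $x=(x_1,x_2,x_3,x_4)$ along the two hyperbolic pairs occurring in $\det(x)=x_1x_4-x_2x_3$. Since $m\cdot x=m_1x_1+m_2x_2+m_3x_3+m_4x_4$, the exponent $s\det(x)-m\cdot x$ decomposes as
$$\big(sx_1x_4-m_1x_1-m_4x_4\big)+\big(-sx_2x_3-m_2x_2-m_3x_3\big),$$
so the sum over $x$ splits into a product of a sum over $(x_1,x_4)$ and a sum over $(x_2,x_3)$. Each factor is then computed by summing over one of its variables first: the orthogonality of $\chi$ forces a linear constraint (respectively $x_1=m_4/s$ and $x_2=-m_3/s$), collapsing each factor to $q$ times a single character value. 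Multiplying the two contributions yields $q^2\chi\!\left(-\|m\|_*/s\right)$, and restoring the prefactor $q^{-5}\chi(-st)$ together with the outer sum over $s\neq 0$ gives precisely $q^{-3}\sum_{s\neq 0}\chi\!\left(-st-\|m\|_*/s\right)$, as claimed in \eqref{eq2}.

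This argument is entirely computational and presents no genuine obstacle; the only point demanding care is the bookkeeping of signs in the factorization and in the substitutions $x_1=m_4/s$, $x_2=-m_3/s$, which must be tracked so that the two resulting quadratic terms assemble exactly into $m_1m_4-m_2m_3=\det(m)=\|m\|_*$ rather than into some sign-twisted variant.
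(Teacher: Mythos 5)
Your proposal is correct and is essentially identical to the paper's own proof: both detect the constraint $\det(x)=t$ via orthogonality of $\chi$, split off the $s=0$ term to get $\delta_0(m)/q$, factor the remaining sum over $x$ into the two hyperbolic pairs $(x_1,x_4)$ and $(x_2,x_3)$, and collapse each factor by orthogonality to obtain $q^2\chi(-\|m\|_*/s)$. The only (immaterial) difference is which variable in each pair you sum over first, i.e.\ whether the linear constraint lands on $x_1,x_2$ or on $x_4,x_3$.
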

 \begin{proof} By the orthogonality of $\chi$, we have
 $$ \widehat{D_t}(m)=q^{-4}\sum_{x\in M_2(\mathbb F_q):\|x\|_*=t} \chi(-m\cdot x)=q^{-5} \sum_{x\in M_2(\mathbb F_q)} \sum_{s\in \mathbb F_q} \chi(s(\|x\|_*-t)) \chi(-m\cdot x)$$
$$= \frac{\delta_0(m)}{q} + q^{-5}\sum_{s\ne 0} \sum_{x\in M_2(\mathbb F_q)} 
\chi(s(\|x\|_*-t)) \chi(-m\cdot x)$$
$$=\frac{\delta_0(m)}{q} +  q^{-5}\sum_{s\ne 0} \chi(-st) \left(\sum_{x_1,x_4\in \mathbb F_q} \chi( (sx_4-m_1)x_1 -m_4x_4)\right) \left(\sum_{x_2,x_3\in \mathbb F_q} \chi((-sx_3-m_2)x_2-m_3x_3)\right)$$
Using the orthogonality of $\chi$ again, we compute the sums over $x_1, x_3\in \mathbb F_q.$ Then we see that 
$$\widehat{D_t}(m)=\frac{\delta_0(m)}{q} + q^{-3}\sum_{s\ne 0} \chi(-st) \chi\left(-\frac{\|m\|_*}{s}\right),$$ 
which completes the proof.
 \end{proof}
 \begin{proof}[Proof of Proposition \ref{thm3}]
To complete the proof, it will be enough to show that if $|E||F|>4 q^5,$ then $N_t(E,F)>0$ for all $t\in \mathbb F_q^*.$ We proceed as in \cite{IR07}.
By definition, 
$$N_t(E,F)=\sum_{x\in E, y\in F: \det(x+y)=t} 1 = \sum_{x\in E, y\in F} D_t(x+y). $$
Applying the Fourier inversion theorem to the function $D_t(x+y)$ and using the definition of the Fourier transform, we see that
\begin{equation}\label{eq1} N_t(E,F)=q^{8} \sum_{m\in  M_2(\mathbb F_q)} \widehat{D_t}(m) \overline{\widehat{E}}(m) \overline{\widehat{F}}(m).\end{equation}
 Combining \eqref{eq1} with \eqref{eq2}, we get
 $$ N_t(E,F)= \frac{|E||F|}{q} + R(t),$$
 where $R(t)$ is given by
 $$ R(t)=q^5 \sum_{m\in M_2(\mathbb F_q)} \left(\sum_{s\ne 0} \chi\left( -st-\frac{\|m\|_*}{s}\right)\right) \overline{\widehat{E}}(m) \overline{\widehat{F}}(m).$$
For $t\ne 0$, the sum over $s\ne 0$ is the Kloosterman sum whose absolute value is less than or equal to $2\sqrt{q}.$  Thus we have
$$ N_t(E,F)\ge \frac{|E||F|}{q} -|R(t)| \ge\frac{|E||F|}{q}- 2q^{11/2} \sum_{m\in \mathbb F_q^4} |\widehat{E}(m)| |\widehat{F}(m)|$$
$$ \ge \frac{|E||F|}{q} - 2 q^{3/2} |E|^{1/2}|F|^{1/2},$$
where the last inequality follows from the Cauchy-Schwarz inequality and the Plancherel theorem. Thus $N_t(E,F)>0,$ provided that $|E||F|>4 q^5.$  This completes the proof. 
 \end{proof}

\end{document}